\newtheorem{theorem}{Theorem}[section]
\newtheorem{corollary}{Corollary}[section]
\newtheorem{proposition}{Proposition}[section]
\begin{document}
\theoremstyle{plain}
\newtheorem{MainThm}{Theorem}
\newtheorem{thm}{Theorem}[section]
\newtheorem{clry}[thm]{Corollary}
\newtheorem{prop}[thm]{Proposition}
\newtheorem{lem}[thm]{Lemma}
\newtheorem{deft}[thm]{Definition}
\newtheorem{hyp}{Assumption}
\newtheorem*{ThmLeU}{Theorem (J.~Lee, G.~Uhlmann)}

\theoremstyle{definition}
\newtheorem{rem}[thm]{Remark}
\newtheorem*{acknow}{Acknowledgments}
\numberwithin{equation}{section}
\newcommand{\eps}{{\varphi}repsilon}
\renewcommand{\d}{\partial}
\newcommand{\re}{\mathop{\rm Re} }
\newcommand{\im}{\mathop{\rm Im}}
\newcommand{\R}{\mathbf{R}}
\newcommand{\C}{\mathbf{C}}
\newcommand{\N}{\mathbf{N}}
\newcommand{\D}{C^{\infty}_0}
\renewcommand{\O}{\mathcal{O}}
\newcommand{\dbar}{\overline{\d}}
\newcommand{\supp}{\mathop{\rm supp}}
\newcommand{\abs}[1]{\lvert #1 \rvert}
\newcommand{\csubset}{\Subset}
\newcommand{\detg}{\lvert g \rvert}
\title[Partial data in two dimensions]{Partial data for the Calder\'on problem
in two dimensions}

\author[O. Imanuvilov]{Oleg Yu. Imanuvilov}
\address{Department of Mathematics, Colorado State
University, 101 Weber Building, Fort Collins CO, 80523 USA\\
e-mail: oleg@math.colostate.edu}
\thanks{First author partly supported by NSF grant DMS 0808130}

\author[G. Uhlmann]{Gunther Uhlmann}
\address{Department of Mathematics, University of Washington, Seattle,
WA 98195 USA\\
e-mail: gunther@math.washington.edu}
\thanks{Second author partly supported by NSF and a Walker Family Endowed
Professorship}

\author[M. Yamamoto]{Masahiro Yamamoto}
\address{Department of Mathematics, University of Tokyo, Komaba, Meguro,
Tokyo 153, Japan \\e-mail:  myama@ms.u-tokyo.ac.jp}

\begin{abstract}
We show in two dimensions that measuring Dirichlet
data for the conductivity equation on an open subset of the boundary
and, roughly speaking, Neumann data in slightly larger set than the
complement uniquely determines the conductivity on a simply
connected domain. The proof is reduced to show a similar result for
the Schr\"odinger equation. Using Carleman estimates with degenerate
weights we construct appropriate complex geometrical optics
solutions to prove the results.
\end{abstract}
\maketitle
\setcounter{tocdepth}{1}
\setcounter{secnumdepth}{2}

\section{\bf Introduction}

 This paper is concerned with the Electrical Impedance Tomography (EIT) inverse problem. The EIT inverse problem  consists in
determining the electrical conductivity of a body by making voltage and
current measurements at the boundary of the body. Substantial progress
has
been made on this problem since Calder{\'o}n's pioneer contribution
\cite{C}. This inverse problem is known also as the Calder{\'o}n problem.
This problem can be reduced to studying the Dirichlet-to-Neumann (DN) 
map associated to the Schr{\"o}dinger equation. A key ingredient in 
several of
the results is the construction of complex geometrical optics solutions
for the Schr{\"o}dinger equation (see \cite{U} for a recent survey). 
By this method in
dimensions $n\ge 3$ for the conductivity equation, the first global
uniqueness result for $C^2$ conductivities was proven in \cite{SU} and
the regularity was improved to having
$3/2$ derivatives in \cite{BT} and \cite{PPU}.
More singular conormal conductivities were considered in \cite{GLU}.
The uniqueness results were proven also for the Schr\"odinger equation.

In two dimensions the first global uniqueness result for the
Calder\'on problem with full data is in \cite{N} for
$C^2$-conductivities, and this was improved to
Lipschitz conductivities in \cite{B-U} and for merely $L^\infty$
conductivities in \cite{AP}. However, the corresponding result for
the Schr\"odinger equation was not known until the recent
breakthrough \cite{Bu}.  As for the uniqueness in determining two
coefficients, see \cite{ChengYama}.
In \cite{KU} it is shown in two dimensions
that one can uniquely determine the magnetic field and the electrical potential from the DN map associated to the Pauli Hamiltonian.

If the DN map is measured only on a part of the boundary, then much 
less is known.  We only review here the results where no a-priori 
information is assumed.
In dimensions $n\ge 3$ a global result is shown in \cite{BuU}
where partial measurements of the DN map are assumed:  
More precisely, for $C^2$ conductivities if we measure the DN map
restricted to a slightly larger than the half of
the boundary, then one
can determine uniquely the potential. The proof relies on a Carleman
estimate with a linear weight function. The Carleman
estimate can also be used to construct complex geometrical optics
solutions for the Schr{\"o}dinger equation.
In \cite{K} the regularity assumption on the conductivity was relaxed to
$C^{3/2+\ell}$ with some $\ell > 0$. 
Stability estimates for the uniqueness result of \cite{BuU}
were given in \cite{HW}. Stability estimates for the magnetic
Schr\"odinger operator with partial data
in the setting of \cite{BuU} can be found in \cite{T}.

In \cite{KSU}, the result in \cite{BuU} was generalized and it is 
shown that
by all possible pairs of Dirichlet data on an arbitrary open subset
$\Gamma_+$ of the boundary and Neumann data on a slightly larger
open subset  than $\partial\Omega\setminus \Gamma_+$, one can
uniquely determine the potential.  The case of the magnetic
Schr\"odinger equation was considered in \cite{DKSjU} and
improvement on the regularity of the coefficients can be found in
\cite{KS}.

In this paper we show a result similar to \cite{KSU} in two dimensions
by constructing complex geometrical optics solutions with degenerate
weights.
We note that in two dimensions the problem is formally determined
while in three or higher dimensions it is overdetermined.
We now state the main result more precisely.

Let $\Omega\subset\R^2$ be a simply connected bounded domain with
smooth boundary.
The electrical conductivity of $\Omega$ is
represented by a bounded and positive function $\gamma(x)$.
In the absence
of sinks or sources of current, the potential $u\in
H^1(\Omega)$ with given boundary voltage potential $f\in
H^{\frac{1}{2}}(\partial\Omega)$ is a solution of the Dirichlet boundary value
problem
\begin{equation}\label{eq:0.2}
\begin{array}{rcl}
\mbox{div}(\gamma\nabla u) & = & 0 \mbox{ in }\Omega, \\
u\big|_{\partial\Omega} & = & f.
\end{array}
\end{equation}
The Dirichlet to Neumann (DN) map, or voltage to current map, is
given by
\begin{equation}\label{eq:0.3}
\Lambda_\gamma(f)= \gamma\frac{\partial
u}{\partial \nu }\Big|_{\partial\Omega},
\end{equation}
where $\nu$ denotes the unit outer normal to $\partial\Omega$.
This problem can be reduced to studying the set of Cauchy data for
the Schr\"odinger equation  with the potential $q$ given by:
\begin{equation}\label{eq:2.2}
q=\frac{\Delta\sqrt{\gamma}}{\sqrt{\gamma}}.
\end{equation}

\begin{equation}
\widetilde C_q=\left\{\left(u|_{\partial\Omega},
\frac{\partial u}{\partial \nu}\Big|_{\partial\Omega}\right)\mid
(\Delta+q) u= 0\hbox{ on }\Omega,\ u\in H^1(\Omega)\right\}.
\end{equation}
We have
$\widetilde C_q\subset H^{\frac{1}{2}}(\partial\Omega)\times
H^{-\frac{1}{2}}(\partial\Omega)$.

By using a conformal map, thanks to the Kellog-Warchawski theorem
(see e.g. p. 42 \cite{Po}), without loss of generality we assume that
$\Omega= \{ x\in \R^2\vert \thinspace \vert x\vert < 1\}$.

Let $\Gamma_-=\{(\cos\theta,\sin\theta)\vert \theta\in
(-\theta_0,\theta_0)\}$ be a connected subdomain in $\partial\Omega$
and $\theta_0\in (0,\pi),$ $\widehat x_\pm$ the boundary of
$\Gamma_-$: $\partial\Gamma_- =\{\widehat x_\pm\}.$ Denote
$\Gamma_+=S^1\setminus \Gamma_-.$ Let $\epsilon>0$ be a small number
such that $\theta_0+\epsilon\in (0,\pi].$ Denote by
$\Gamma_{-,\epsilon}=\{(\cos\theta,\sin\theta)\vert \theta\in
(-\theta_0-\epsilon,\theta_0+\epsilon)\}$ and by $\widehat
x_{\pm,\epsilon}$ the endpoints of $\Gamma_{-,\epsilon}.$

We have

\begin{theorem}\label{main}
Let $q_j \in C^{1+\ell}(\overline \Omega)$, $j=1,2$ for some positive $\ell.$ Consider the
following sets of partial Cauchy data:

\begin{equation}
\mathcal{C}_{q_j,\epsilon}=\left\{\left(u|_{\Gamma_{+}},
\frac{\partial u}{\partial \nu}\Big|_{\Gamma_{-,\epsilon}}\right)\mid
(\Delta+q_j) u= 0\hbox{ in }\Omega,\,\, u\vert_{\Gamma_-}=0,\,\,\ u\in H^1(\Omega)\right\},
\quad j=1,2.
\end{equation}
Assume
$$
\mathcal{C}_{q_1,\epsilon}= \mathcal{C}_{q_2,\epsilon}
$$
with some $\epsilon>0$.
Then
$$
q_1=q_2.
$$
\end{theorem}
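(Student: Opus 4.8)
The plan follows the established route for partial‑data uniqueness: convert equality of Cauchy data into an orthogonality relation and feed complex geometrical optics (CGO) solutions into it---but in two dimensions, where one cannot absorb a free Fourier frequency into the phase and must instead localize by Bukhgeim‑type stationary phase. \emph{Step 1 (reduction to an identity).} Let $u_1\in H^1(\Omega)$ solve $(\Delta+q_1)u_1=0$ with $u_1|_{\Gamma_-}=0$. By $\mathcal C_{q_1,\epsilon}=\mathcal C_{q_2,\epsilon}$ there is $\widetilde u_2\in H^1(\Omega)$ with $(\Delta+q_2)\widetilde u_2=0$, $\widetilde u_2|_{\Gamma_-}=0$, $\widetilde u_2|_{\Gamma_+}=u_1|_{\Gamma_+}$ and $\partial_\nu\widetilde u_2|_{\Gamma_{-,\epsilon}}=\partial_\nu u_1|_{\Gamma_{-,\epsilon}}$. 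Then $u:=u_1-\widetilde u_2$ has $u|_{\partial\Omega}=0$, $\partial_\nu u|_{\Gamma_{-,\epsilon}}=0$, $(\Delta+q_2)u=(q_2-q_1)u_1$, and pairing with an arbitrary $u_2\in H^1(\Omega)$ with $(\Delta+q_2)u_2=0$, Green's formula gives
\[
\int_\Omega(q_1-q_2)\,u_1u_2\,dx=\int_{\widetilde\Gamma}(\partial_\nu u_1-\partial_\nu\widetilde u_2)\,u_2\,d\sigma,\qquad\widetilde\Gamma:=\partial\Omega\setminus\Gamma_{-,\epsilon}.
\]
Here $\widetilde\Gamma\subset\Gamma_+$ with $\mathrm{dist}(\widetilde\Gamma,\overline{\Gamma_-})\ge c\,\epsilon>0$: this buffer is what will let one weight be good simultaneously on $\Gamma_-$ and on $\widetilde\Gamma$. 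It suffices to build, for every $z_0$ in a dense subset of $\Omega$, $\tau$-families $u_1(\tau),u_2(\tau)$ for which $\tau^{\kappa}$ times the left side converges to $c_{z_0}(q_1-q_2)(z_0)$, $c_{z_0}\ne0$, while $\tau^{\kappa}$ times the right side converges to $0$.

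\emph{Step 2 (CGO solutions).} Fix $z_0$ and choose a holomorphic $\Phi=\varphi+i\psi$ near $\overline\Omega$ whose only critical point in $\overline\Omega$ is $z_0$, with $\Phi-\Phi(z_0)$ vanishing there to a high order $m$, arranged so that $\partial_\nu\varphi<0$ on $\overline{\widetilde\Gamma}$ (hence $\{\partial_\nu\varphi>0\}\cap\partial\Omega\subset\Gamma_{-,\epsilon}$) and $\varphi<\varphi(z_0)$ on $\Gamma_-$; on the disk such weights exist (e.g.\ $\Phi=(z-z_0)^m G(z)+\mathrm{const}$ with $G$ holomorphic and zero‑free on $\overline\Omega$, the flexibility of $G$ producing the boundary signs, the buffer keeping the two conditions compatible). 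The degenerate‑weight Carleman estimate for $\Delta+q_j$ with weight $e^{\mp\tau\varphi}$ then yields
\[
u_1=e^{\tau\Phi}(a+a_{-1}/\tau+\cdots)+r_1,\qquad u_2=e^{-\tau\overline\Phi}(b+\cdots)+r_2,
\]
with holomorphic principal amplitudes, $a(z_0)=b(z_0)=1$, lower‑order amplitudes solving the $\overline\partial$-transport equations, and remainders $r_j$ small in the natural weighted norms (so $\|e^{\mp\tau\varphi}u_j\|_{L^2(\Omega)}=O(1)$); the $C^{1+\ell}$ regularity of $q_j$ enters in the transport and remainder estimates. Finally $u_1$ is corrected to satisfy $u_1|_{\Gamma_-}=0$ by absorbing its trace on $\Gamma_-$ into the remainder through a suitable (mixed) boundary value problem; since $e^{-\tau\varphi}u_1$ has $O(1)$ trace on $\Gamma_-$, this correction is $O(\tau^{-1})$ in the weighted norm and does not disturb the CGO structure in the interior.

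\emph{Step 3 (limit).} On the left, $u_1u_2=a\,\overline{(\cdots)}\,e^{2i\tau\psi}(1+o(1))$ has bounded modulus and oscillatory phase $2\tau\psi$ whose only critical point in $\overline\Omega$ is the saddle $z_0$; stationary phase gives $\tau^{\kappa}\int_\Omega(q_1-q_2)u_1u_2\,dx\to c_{z_0}(q_1-q_2)(z_0)$, $c_{z_0}\ne0$, with $\kappa=2/m$ (up to scaling conventions), the non‑stationary boundary contributions of this oscillatory integral being $o(\tau^{-\kappa})$ once $m$ is large. On the right, the boundary Carleman estimate with the favorable sign of $\partial_\nu\varphi$, together with $\partial_\nu u|_{\Gamma_{-,\epsilon}}=0$, kills the would‑be uncontrolled boundary term and gives $\|e^{-\tau\varphi}\partial_\nu u\|_{L^2(\widetilde\Gamma)}\le C\tau^{-1/2}\|e^{-\tau\varphi}(q_2-q_1)u_1\|_{L^2(\Omega)}=O(\tau^{-1/2})$, while $\|e^{\tau\varphi}u_2\|_{L^2(\widetilde\Gamma)}=O(1)$; hence the right side is $O(\tau^{-1/2})$ and $\tau^{\kappa}$ times it $\to0$ as soon as $\kappa<\tfrac12$, i.e.\ $m$ is chosen large enough. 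Therefore $(q_1-q_2)(z_0)=0$ on a dense set, so $q_1\equiv q_2$ on $\overline\Omega$ by continuity.

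\emph{Main obstacle.} The heart of the matter is Step~2: producing, for an arbitrary $z_0\in\Omega$, a holomorphic weight $\Phi$ on the disk with its single critical point at $z_0$ of sufficiently high order, no other critical points in $\overline\Omega$, and the two sign conditions on $\widetilde\Gamma$ and $\Gamma_-$ that are compatible only by virtue of the $\epsilon$-enlargement; and proving the accompanying Carleman estimate for such a degenerate weight sharp enough (with the right boundary terms) that the $\widetilde\Gamma$-integral is genuinely of smaller order than the interior stationary‑phase value $\tau^{-\kappa}$. Matching the degree of degeneracy of the weight against the quality of the boundary Carleman estimate and against the fixed geometry $(\Gamma_-,\Gamma_{-,\epsilon})$---this is precisely the ``Carleman estimates with degenerate weights'' of the abstract---is the nontrivial part; the identity, the transport equations, and the stationary‑phase bookkeeping are routine.
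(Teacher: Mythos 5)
Your overall philosophy (CGO with a degenerate harmonic weight, stationary phase at an interior critical point, a Carleman estimate adapted to the partitioned boundary) is the right one, but two of your central mechanisms diverge from what actually works here, and each hides a genuine gap. First, the boundary term. You keep the term $\int_{\widetilde\Gamma}(\partial_\nu u)\,u_2\,d\sigma$ and propose to estimate it by $\Vert e^{-\tau\varphi}\partial_\nu u\Vert_{L^2(\widetilde\Gamma)}\cdot\Vert e^{\tau\varphi}u_2\Vert_{L^2(\widetilde\Gamma)}$. The first factor is fine (that is exactly what the boundary term in the degenerate-weight Carleman estimate gives, with the $\epsilon$-buffer ensuring the uncontrolled sign region sits inside $\Gamma_{-,\epsilon}$ where $\partial_\nu u=0$). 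But the second factor requires a \emph{boundary trace} bound on the full CGO $u_2$, including its remainder; the remainder is produced by a duality/Riesz argument from the Carleman estimate and comes with interior $L^2$ control only, so its trace on $\widetilde\Gamma$ is not controlled. The paper sidesteps this entirely: it builds the second solution to \emph{vanish} on $\mathcal S=\partial\Omega\setminus\Gamma_{-,\epsilon}$ (via a reflected term $\chi\, e^{-\tau\overline{\Phi(1/\overline z)}}\,\overline{b(1/\overline z)}$ plus a correction with prescribed trace on $\mathcal S$), while the first solution vanishes on $\Gamma_-$; since $\partial_\nu u=0$ on $\Gamma_{-,\epsilon}$ and $u_2=0$ on the complement, the boundary integral is identically zero and the orthogonality relation $\int_\Omega q\,u_1u_2\,dx=0$ is exact. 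This is not a cosmetic difference: with a nondegenerate critical point the interior main term is only $O(1/\tau)$, which an $O(\tau^{-1/2})$ boundary term would swamp, so the exact vanishing is what makes the whole scheme close.

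Second, your escape route --- a single critical point of high order $m\ge 5$ so that $\tau^{-2/m}$ beats $\tau^{-1/2}$ --- creates problems worse than the one it solves, and you dismiss them as ``routine bookkeeping.'' The transport correction behaves like $(\cdots)/(\tau\,\partial_z\Phi)$ with $1/\partial_z\Phi\sim|z-z_0|^{-(m-1)}$, which for $m\ge 3$ fails to lie even in $L^{2-\delta}$ near $z_0$; the paper's estimates for the correction terms (its Proposition 3.3) hinge precisely on $\Vert 1/\partial_z\Phi\Vert_{L^{2-\delta}(\Omega)}<\infty$, valid only for simple zeros. Likewise the asymptotics of $\int h\,e^{2i\tau\,\mathrm{Im}\,\Phi}dx$ at a critical point of order $m\ge3$ (oscillation index, possible logarithmic factors, nonvanishing of the leading coefficient) are far from the textbook stationary phase lemma you invoke. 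Finally, the existence of a weight with a \emph{single} prescribed critical point, no others in $\overline\Omega$, and the two boundary sign conditions is asserted, not proved; this is the hardest existence question in the problem. The paper's construction (approximate solvability of a Cauchy problem for the Laplacian, multiplication by $z^2$, polynomial approximation to remove degeneracies) unavoidably yields \emph{several} nondegenerate critical points, so the limit produces only the sum $\sum_k (qa\overline b)(x_k)\,e^{2i\tau\,\mathrm{Im}\,\Phi(x_k)}/|\det\mathrm{Im}\,\Phi''(x_k)|^{1/2}$, from which the pointwise value $q(x_{\widehat j})=0$ must be extracted by Bohr's almost-periodicity theorem and a differentiation-in-$\varepsilon$ perturbation of the weight. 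These three ingredients --- the reflected second CGO killing the boundary term, the fine analysis of the correction terms at simple critical points, and the extraction of $q$ from a sum over several critical points --- are the actual content of the proof, and none of them is supplied by the proposal.
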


As a direct consequence of Theorem~\ref{main} we have

\begin{corollary}\label{coro}
Let
$\gamma_1,\gamma_2$ be strictly positive functions and  there exists some positive number $\ell$ such that $\gamma_1,\gamma_2 \in C^{3+\ell}(\overline\Omega).$
Assume that $\gamma_1=\gamma_2$ on $\partial\Omega$ and
$$
{\gamma_1}\frac{\partial u}{\partial \nu}={\gamma_2}\frac{\partial u}
{\partial \nu}\quad\mbox{on}\quad
\Gamma_{-,\epsilon}
\mbox{ for all }
u\in H^{1\over 2}(\partial\Omega ),\,\,\supp \, u\subset\Gamma_+.
$$
Then $\gamma_1=\gamma_2$.
\end{corollary}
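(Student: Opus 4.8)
\emph{Setup.} The plan is to reduce Corollary~\ref{coro} to Theorem~\ref{main}, the only point beyond a formal substitution being a local boundary identification of $\partial_\nu\gamma_j$. Define $q_j$ by \eqref{eq:2.2}; since $\gamma_j\in C^{3+\ell}(\overline\Omega)$ is strictly positive, $\sqrt{\gamma_j}\in C^{3+\ell}(\overline\Omega)$, hence $q_j\in C^{1+\ell}(\overline\Omega)$ and Theorem~\ref{main} is applicable. Recall that $u\in H^1(\Omega)$ solves $\mbox{div}(\gamma_j\nabla u)=0$ in $\Omega$ if and only if $v:=\sqrt{\gamma_j}\,u\in H^1(\Omega)$ solves the Schr\"odinger equation associated with $q_j$ in $\Omega$, and that on $\partial\Omega$ the Cauchy data transform by $v=\sqrt{\gamma_j}\,u$ and $\partial_\nu v=(\partial_\nu\sqrt{\gamma_j})\,u+\tfrac{1}{\sqrt{\gamma_j}}\,\gamma_j\partial_\nu u$.

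\emph{Step 1: boundary determination.} First I would show $\partial_\nu\gamma_1=\partial_\nu\gamma_2$ on the two open arcs $\Gamma_{-,\epsilon}\setminus\overline{\Gamma_-}$. Fix $p$ in one of them. Since $\theta_0+\epsilon\le\pi$, the point $p$ lies in the relative interior of $\Gamma_+$ and, $\Gamma_{-,\epsilon}$ being open, also in $\Gamma_{-,\epsilon}$, so some neighborhood $V$ of $p$ in $\partial\Omega$ satisfies $V\subset\Gamma_+\cap\Gamma_{-,\epsilon}$. Choosing highly oscillatory Dirichlet data $f_N$ supported in $V$ (admissible in the hypothesis of the Corollary, as $\supp f_N\subset\Gamma_+$) one has $\int_{\partial\Omega}(\Lambda_{\gamma_j}f_N)\,\overline{f_N}=\int_{\Gamma_{-,\epsilon}}(\Lambda_{\gamma_j}f_N)\,\overline{f_N}$, and the classical boundary-determination expansion of this quantity as $N\to\infty$ recovers $\gamma_j(p)$ from the leading term and $\partial_\nu\gamma_j(p)$ from the next. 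Since $\Lambda_{\gamma_1}f_N=\Lambda_{\gamma_2}f_N$ on $\Gamma_{-,\epsilon}$ and $\gamma_1=\gamma_2$ on $\partial\Omega$, this forces $\partial_\nu\gamma_1(p)=\partial_\nu\gamma_2(p)$. (This is the localized Kohn--Vogelius / Sylvester--Uhlmann argument used in \cite{BuU,KSU}.)

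\emph{Step 2: equality of the partial Cauchy data and conclusion.} Next I would verify $\mathcal{C}_{q_1,\epsilon}=\mathcal{C}_{q_2,\epsilon}$; by symmetry take $(g,h)\in\mathcal{C}_{q_1,\epsilon}$, $g=v_1|_{\Gamma_+}$, $h=\partial_\nu v_1|_{\Gamma_{-,\epsilon}}$, with $v_1\in H^1(\Omega)$, $(\Delta+q_1)v_1=0$, $v_1|_{\Gamma_-}=0$. Put $u_1:=v_1/\sqrt{\gamma_1}$, so $\mbox{div}(\gamma_1\nabla u_1)=0$ and $f:=u_1|_{\partial\Omega}\in H^{1/2}(\partial\Omega)$ satisfies $\supp f\subset\Gamma_+$ (here it helps that $\Gamma_+=S^1\setminus\Gamma_-$ is closed). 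Let $u_2\in H^1(\Omega)$ solve $\mbox{div}(\gamma_2\nabla u_2)=0$, $u_2|_{\partial\Omega}=f$, and put $v_2:=\sqrt{\gamma_2}\,u_2$. Since $\gamma_1=\gamma_2$ on $\partial\Omega$, $v_2|_{\partial\Omega}=\sqrt{\gamma_1}\,f=v_1|_{\partial\Omega}$, whence $v_2|_{\Gamma_-}=0$ and $v_2|_{\Gamma_+}=g$; moreover on $\partial\Omega$
\[
\partial_\nu v_1-\partial_\nu v_2=\bigl(\partial_\nu\sqrt{\gamma_1}-\partial_\nu\sqrt{\gamma_2}\bigr)f+\tfrac{1}{\sqrt{\gamma_1}}\bigl(\Lambda_{\gamma_1}f-\Lambda_{\gamma_2}f\bigr),
\]
which vanishes on $\Gamma_-$ (there $f=0$) and on $\Gamma_{-,\epsilon}\setminus\overline{\Gamma_-}$ (by Step 1 and $\Lambda_{\gamma_1}f=\Lambda_{\gamma_2}f$ on $\Gamma_{-,\epsilon}$); hence $\partial_\nu v_1=\partial_\nu v_2$ on $\Gamma_{-,\epsilon}$ and $(g,h)\in\mathcal{C}_{q_2,\epsilon}$. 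Thus $\mathcal{C}_{q_1,\epsilon}=\mathcal{C}_{q_2,\epsilon}$, and Theorem~\ref{main} gives $q_1=q_2=:q$. Applying the transformation to the constant solution $u\equiv1$ shows that $\sqrt{\gamma_1}$ and $\sqrt{\gamma_2}$ both solve the Schr\"odinger equation with the common potential $q$; their difference $w:=\sqrt{\gamma_1}-\sqrt{\gamma_2}$ then solves that equation with $w=0$ on $\partial\Omega$ and $\partial_\nu w=0$ on $\Gamma_{-,\epsilon}\setminus\overline{\Gamma_-}$ (using $\gamma_1=\gamma_2$ on $\partial\Omega$ and Step 1). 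Extending $w$ by zero across those open arcs and invoking unique continuation for this second-order elliptic operator yields $w\equiv0$, i.e. $\gamma_1=\gamma_2$.

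\emph{Main obstacle.} I expect the only genuinely delicate point to be Step 1: one must arrange the oscillatory test data so that their supports lie in $\Gamma_+$ while their Neumann traces are still recorded on $\Gamma_{-,\epsilon}$, which is possible exactly on the arcs $\Gamma_{-,\epsilon}\setminus\overline{\Gamma_-}$ — and, reassuringly, it is precisely on those arcs that $\partial_\nu\gamma_j$ is needed in Step 2, since on $\Gamma_-$ the admissible solutions vanish. Everything else is a formal reduction, with Theorem~\ref{main} carrying the analytic weight.
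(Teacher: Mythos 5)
Your proposal is correct and follows exactly the route the paper intends when it calls the corollary a ``direct consequence'' of Theorem~\ref{main} (the paper gives no written proof): the Liouville substitution $v=\sqrt{\gamma_j}\,u$, boundary determination of $\partial_\nu\gamma_j$ on the arcs $\Gamma_{-,\epsilon}\setminus\overline{\Gamma_-}$ so that the partial Cauchy data sets $\mathcal{C}_{q_j,\epsilon}$ coincide, and then Theorem~\ref{main}. The only remark worth making is that your final step can be done without unique continuation: once $q_1=q_2=q$, the function $\phi=(\sqrt{\gamma_1}-\sqrt{\gamma_2})/\sqrt{\gamma_1}$ satisfies $\mbox{div}(\gamma_1\nabla\phi)=0$ with $\phi|_{\partial\Omega}=0$, so $\phi\equiv 0$ by uniqueness for the Dirichlet problem.
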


The proof of Theorem \ref{main} uses Carleman estimates for the
Laplacian with
degenerate limiting Carleman weights.  The results of \cite{BuU}
and \cite{KSU} use complex geometrical optics solutions of the form
\begin{equation}\label{cgo}
u= e^{\tau(\varphi+\sqrt{-1}\psi)} (a+r),
\end{equation}
where $\nabla \varphi \cdot\nabla \psi=0,
|\nabla \varphi|^2=|\nabla \psi|^2$
and $\varphi$ is a limiting Carleman weight and $a$ is smooth and
non-vanishing and
$\Vert r\Vert_{L^2(\Omega)} =O(\frac{1}{\tau})$, $\Vert r\Vert
_{H^1(\Omega)} =O(1)$. Examples of limiting Carleman weights are the linear
phase $\varphi(x)=x\cdot\omega$ with $\omega\in S^{n-1}$ which was
used in \cite{BuU}, and the non-linear phase
$\varphi(x)=
\ln|x-x_0|$, where  $x_0\in {\bf R}^n\setminus\overline{\Omega}$ which
was used in \cite{KSU}.
For a complete characterization of possible local Carleman weights in
the Euclidean space and more general manifolds, see \cite{DKSaU}.

In two dimensions
the limiting Carleman weights are harmonic functions so that there is
a larger class of complex geometrical optics solutions.
This freedom was used in
\cite{UW} to determine inclusions for a large class of systems in two
dimensions. In particular, one can use the harmonic function
$\varphi=\mbox{Re}\,z^n$ as limiting Carleman weight, assuming that $0$ 
is outside the domain.

In this paper we construct complex geometrical optics solutions  of the
form
\begin{equation}
u= e^{\tau(\varphi+\sqrt{-1}\psi)} (a+r)+ u_r,
\end{equation}
where $u_r$ is a ``reflected" term to guarantee that the solution
vanishes in particular subsets of the boundary, $\varphi$ is a
harmonic function having a finite number of non-degenerate critical
points in $\Omega$, and $\psi$ is the corresponding conjugate
harmonic function. However we need to modify the form with $\varphi$
harmonic but having non-degenerate critical points. Solutions as in
(\ref{cgo}) with degenerate harmonic functions were also used in
\cite{Bu} but here the phase function needs to satisfy further
restrictions in order to use them for the partial data problem.
Another complication is that the correction term $r$ and the
reflected term $u_r$ do not have the same asymptotic behavior in
$\tau$ as in \cite{KSU} because of the degeneration of the phase,
so that one needs to further decompose these terms and analyze
their asymptotic behavior in $\tau.$ See section 3 for more details.
In section 2 we prove a general Carleman estimate with degenerate
weights. Finally in section 4 we prove Theorem \ref{main}.

\section{\bf Carleman estimates with degenerate weights}

Throughout the paper we use the following notations:
\\

\noindent {\bf Notations}
$i=\sqrt{-1}$, $x_1, x_2, \xi_1, \xi_2 \in \R$,
$z=x_1+ix_2$, $\zeta=\xi_1+i\xi_2,$ $\frac{\partial}{\partial z}=\frac
12(\partial_{x_1}-i\partial_{x_2})$,
$\frac{\partial}{\partial {\overline z}}=\frac
12(\partial_{x_1}+i\partial_{x_2}),$ $H^{1,\tau}(\Omega)$ denotes
the space $H^1(\Omega)$ with norm $\Vert
v\Vert^2_{H^{1,\tau}(\Omega)}=\Vert
v\Vert^2_{H^{1}(\Omega)}+\tau^2\Vert v\Vert^2_{L^2(\Omega)}.$
The tangential derivative on the boundary is given by
$\partial_\tau=\nu_2\frac{\partial}{\partial x_1}
-\nu_1\frac{\partial}{\partial x_2},$
with $\nu=(\nu_1, \nu_2)$ the unit outer normal to $\partial\Omega,$
$B(\widehat x,\delta)=\{x\in \R^2\vert \vert x-\widehat x\vert <
\delta\},$ $S^1=\{x\in \R^2\vert \vert x\vert=1\}$,
 $f(x):\R^2\rightarrow \R^1$, $f''$ is the Hessian matrix  with entries
 $\frac{\partial^2 f}{\partial x_i\partial x_j}.$

Let $\Phi(z)=\varphi_1(x_1,x_2)+i\varphi_2(x_1,x_2)$  be a
holomorphic function in a domain $\Omega_0$, given that $\overline{\Omega}\subset\Omega_0$,
\begin{equation}\label{zzz}
\frac{\partial\Phi(z)}{ \partial \overline z}=0\quad \mbox{in}
\,\,\Omega_0,\quad \Phi\in C^2(\overline\Omega_0).
\end{equation}
Denote by $\mathcal H$ the set of critical points of a function $\Phi$
$$
\mathcal H=\left\{z\in\overline\Omega\vert \frac{\partial \Phi}
{\partial z} (z)=0 \right\}.
$$
Assume that $\Phi$ has no critical points at the boundary and
nondegenerate critical points in the interior;
\begin{equation}\label{mika}
\mathcal H\cap \partial\Omega=\{\emptyset\},\quad
\Phi^{''}(z)\ne 0 \quad \forall z\in \mathcal H.
\end{equation}
Then $\Phi$ we have only a finite number of critical points:
\begin{equation}\label{mona}
\mbox{card}\thinspace \mathcal H<\infty.
\end{equation}
Denote
$\frac{\partial \Phi}{\partial z}(z)
=\psi_1(x_1,x_2)+i\psi_2(x_1,x_2).$

We will prove Carleman estimates for the conjugated operator

\begin{equation}\nonumber
\Delta_{\tau}=e^{\tau \varphi_1} \Delta e^{-\tau \varphi_1}.
\end{equation}
We will use the factorization
\begin{equation}\label{factorization}
\Delta_\tau \widetilde v=
\left(2\frac{\partial}{\partial z}-\tau\frac{\partial\Phi}{\partial
z}\right)
\left(2\frac{\partial}{\partial\overline z}
-\tau\frac{\partial\overline\Phi}
{\partial \overline z}\right)\widetilde v
= \left(2\frac{\partial}{\partial\overline z}
-\tau\frac{\partial\overline\Phi} {\partial \overline
z}\right)
\left(2\frac{\partial}{\partial z}-\tau\frac{\partial\Phi}{\partial
z}\right)\widetilde v
\end{equation}
and prove Carleman estimates first for every term in the factorization.



\begin{proposition} \label{Proposition 2.1}
Let $\Phi$ satisfy (\ref{zzz}) and (\ref{mika}). Let
$\widetilde f\in L^2(\Omega)$, and $\widetilde v$ be solution
to the problem
\begin{equation}\label{zina}
2\frac{\partial \widetilde v}{\partial z} -\tau
\frac{\partial\Phi}{\partial z}\widetilde v=\widetilde f\quad
\mbox{ in }\,\Omega
\end{equation}
or $\widetilde v$ be solution to the problem
\begin{equation}\label{zina1}
2\frac{\partial \widetilde v}{\partial \overline z}-\tau
\frac{\partial\overline\Phi}{\partial\overline z}\widetilde v=\widetilde
f\quad\mbox{ in }\,\Omega.
\end{equation}
In the case  (\ref{zina}) we have
\begin{eqnarray}\label{vika1}
\left\Vert \left( \frac{\partial}{\partial
x_1}-i\psi_2\tau\right)\widetilde
v\right\Vert^2_{L^2(\Omega)}
- \tau\int_{\partial\Omega}(\nabla\varphi_1,\nu)\vert
\widetilde v\vert^2d\sigma\nonumber\\
+ \mbox{Re}\int_{\partial\Omega}i\left(\left(\nu_2
\frac{\partial}{\partial x_1}-\nu_1 \frac{\partial}{\partial x_2}\right)
\widetilde v\right)\overline{\widetilde
v}d\sigma
+ \left\Vert \left(i \frac{\partial}{\partial x_2}+\tau\psi_1\right)
\widetilde
v\right\Vert^2_{L^2(\Omega)}=\Vert\widetilde f\Vert^2_{L^2(\Omega)},
\end{eqnarray}
while in the case (\ref{zina1}) we have
\begin{eqnarray} \label{vika2}
\left\Vert \left(\frac{\partial}{\partial
x_1}+i\psi_2\tau\right)\widetilde
v\right\Vert^2_{L^2(\Omega)}
-\tau\int_{\partial\Omega}(\nabla\varphi_1,\nu)\vert
\widetilde v\vert^2d\sigma
+ \mbox{Re}\int_{\partial\Omega}i\left(\left(-\nu_2
\frac{\partial}{\partial x_1}+\nu_1 \frac{\partial}{\partial
x_2}\right)\widetilde v\right)\overline{\widetilde
v}d\sigma\nonumber\\
+ \left\Vert \left(i \frac{\partial}{\partial x_2}-\psi_1\tau\right)
\widetilde
v\right\Vert^2_{L^2(\Omega)}=\Vert\widetilde f\Vert^2_{L^2(\Omega)}.
\end{eqnarray}
\end{proposition}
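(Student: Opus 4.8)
The plan is to split the first-order operator in \eqref{zina} into a formally skew-adjoint and a formally self-adjoint part, expand the square in $L^2(\Omega)$, and observe that the resulting cross term collapses to pure boundary integrals because a certain commutator vanishes identically by the holomorphy of $\Phi$. First I would record the consequences of \eqref{zzz}: since $\frac{\partial\Phi}{\partial z}=\psi_1+i\psi_2$ is itself holomorphic, the Cauchy--Riemann equations give $\partial_{x_1}\psi_1=\partial_{x_2}\psi_2$ and $\partial_{x_2}\psi_1=-\partial_{x_1}\psi_2$, and writing out $\frac{\partial\Phi}{\partial z}=\partial_{x_1}\varphi_1+i\partial_{x_1}\varphi_2=\partial_{x_2}\varphi_2-i\partial_{x_2}\varphi_1$ shows $\nabla\varphi_1=(\psi_1,-\psi_2)$, so that $(\nabla\varphi_1,\nu)=\nu_1\psi_1-\nu_2\psi_2$ on $\partial\Omega$. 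Using $2\frac{\partial}{\partial z}=\partial_{x_1}-i\partial_{x_2}$, equation \eqref{zina} reads $(P-Q)\widetilde v=\widetilde f$ with
\[
P=\partial_{x_1}-i\tau\psi_2,\qquad Q=i\partial_{x_2}+\tau\psi_1,
\]
and a direct check shows $P^{*}=-P$ and $Q^{*}=Q$ as formal differential operators, i.e. $P$ is skew-adjoint and $Q$ self-adjoint modulo the boundary terms produced by Green's formula.

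The main computation is then the following. Expanding,
\[
\Vert\widetilde f\Vert_{L^2(\Omega)}^2=\Vert P\widetilde v\Vert_{L^2(\Omega)}^2+\Vert Q\widetilde v\Vert_{L^2(\Omega)}^2-2\,\mbox{Re}\,(P\widetilde v,Q\widetilde v)_{L^2(\Omega)} .
\]
Integrating by parts once in each factor of the cross term, its interior part equals $-(\widetilde v,[P,Q]\widetilde v)_{L^2(\Omega)}$, and
\[
[P,Q]=PQ-QP=\tau(\partial_{x_1}\psi_1-\partial_{x_2}\psi_2)=0
\]
by the first Cauchy--Riemann relation above. This is the crux of the matter: the holomorphy of $\Phi$ forces the commutator to vanish, so the cross term is \emph{exactly} the boundary contribution
\[
2\,\mbox{Re}\,(P\widetilde v,Q\widetilde v)_{L^2(\Omega)}=\int_{\partial\Omega}\nu_1\widetilde v\,\overline{Q\widetilde v}\,d\sigma+i\int_{\partial\Omega}\nu_2\widetilde v\,\overline{P\widetilde v}\,d\sigma .
\]

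It remains to unwind this boundary term. Expanding $P\widetilde v$ and $Q\widetilde v$ on the right, the terms carrying $\tau$ assemble into $\tau\int_{\partial\Omega}(\nu_1\psi_1-\nu_2\psi_2)\abs{\widetilde v}^2\,d\sigma=\tau\int_{\partial\Omega}(\nabla\varphi_1,\nu)\abs{\widetilde v}^2\,d\sigma$, while the first-order terms combine into $i\int_{\partial\Omega}\overline{(\nu_2\partial_{x_1}-\nu_1\partial_{x_2})\widetilde v}\;\widetilde v\,d\sigma$. Since $\partial\Omega$ is a closed curve, $\int_{\partial\Omega}\partial_\tau(\abs{\widetilde v}^2)\,d\sigma=0$, which rewrites the latter as $-\mbox{Re}\int_{\partial\Omega}i\bigl((\nu_2\partial_{x_1}-\nu_1\partial_{x_2})\widetilde v\bigr)\overline{\widetilde v}\,d\sigma$; substituting back into the expansion produces \eqref{vika1} precisely.

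For \eqref{zina1} I would run the identical argument with $z$ replaced by $\overline z$ and $\Phi$ by $\overline\Phi$: then $2\frac{\partial}{\partial\overline z}=\partial_{x_1}+i\partial_{x_2}$, the equation becomes $(P'+Q')\widetilde v=\widetilde f$ with $P'=\partial_{x_1}+i\tau\psi_2$ (skew-adjoint) and $Q'=i\partial_{x_2}-\tau\psi_1$ (self-adjoint), $[P',Q']=\tau(\partial_{x_2}\psi_2-\partial_{x_1}\psi_1)=0$, and the cross term now enters with a plus sign, which flips the tangential derivative to $-\nu_2\partial_{x_1}+\nu_1\partial_{x_2}$ and yields \eqref{vika2}. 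There is no deep obstacle here; the one point that needs care is regularity. By elliptic regularity for $\partial_z$ (resp.\ $\partial_{\overline z}$) a solution of \eqref{zina} (resp.\ \eqref{zina1}) with $\widetilde f\in L^2(\Omega)$ lies in $H^1(\Omega)$, so the boundary integrals involving $\partial_\tau\widetilde v$ are to be read as $H^{-1/2}$--$H^{1/2}$ dualities on $\partial\Omega$ and the integrations by parts above are first carried out for smooth $\widetilde v$ and then passed to the limit. Note that the non-degeneracy assumption \eqref{mika} is not used in this proposition, only \eqref{zzz}.
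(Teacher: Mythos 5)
Your proposal is correct and takes essentially the same route as the paper: the same splitting of the conjugated operator into the two first-order factors $\partial_{x_1}\mp i\tau\psi_2$ and $\pm(i\partial_{x_2}\pm\tau\psi_1)$, the same observation that the cross term in the expanded $L^2$ norm reduces to pure boundary integrals because the commutator vanishes by the Cauchy--Riemann equations for $\partial_z\Phi$, and the same identification $(\nabla\varphi_1,\nu)=\nu_1\psi_1-\nu_2\psi_2$. Your added remarks (the skew-/self-adjoint structure of the two factors, and that (\ref{mika}) is not actually used in this proposition) are accurate but do not alter the argument.
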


\begin{proof}
We prove the statement of the proposition first for the
equation $ 2\frac{\partial \widetilde v}{\partial z}-\tau
\frac{\partial\Phi}{\partial z}\widetilde v=\widetilde f. $ Since
$2\frac{\partial }{\partial z}-\tau  \frac{\partial\Phi}{\partial
z}=( \frac{\partial}{\partial
x_1}-i\psi_2\tau)+(\frac{\partial}{i\partial x_2}-\psi_1\tau)$,
taking the $L^2-$ norms of the right and left hand sides of (\ref{zina})
we have

\begin{eqnarray}
\left\Vert \left(\frac{\partial}{\partial x_1}-i\psi_2\tau\right)
\widetilde
v\right\Vert^2_{L^2(\Omega)}
+ 2\mbox{Re}\left(\left( \frac{\partial}{\partial
x_1}-i\psi_2\tau\right)\widetilde v,\left(-i \frac{\partial}{\partial
x_2}-\psi_1\tau\right)\widetilde
v\right)_{L^2(\Omega)}\nonumber\\
+ \left\Vert \left(-i \frac{\partial}{\partial x_2}-\psi_1\tau\right)
\widetilde
v\right\Vert^2_{L^2(\Omega)}=\Vert\widetilde f\Vert^2_{L^2(\Omega)}.
\nonumber
\end{eqnarray}

Since we take the commutator to have $[(\frac{\partial}{\partial
x_1}-i\psi_2\tau),(\frac{\partial}{i\partial x_2}-\psi_1\tau)]\equiv
0$, we obtain
\begin{eqnarray}
\left\Vert \left( \frac{\partial}{\partial
x_1}-i\psi_2\tau\right)\widetilde v\right\Vert^2_{L^2(\Omega)}
+ \left(\left(
\frac{\partial}{\partial x_1}-i\psi_2\tau\right)\widetilde
v,\overline{({-i\nu_2\widetilde
v}})\right)_{L^2(\partial\Omega)}
+ \left(\overline{\nu_1\widetilde v},
{\left(-i\frac{\partial}{\partial
x_2}-\psi_1\tau\right)\widetilde v}\right)
_{L^2(\partial\Omega)}\nonumber\\
+ \left\Vert \left(i \frac{\partial}{\partial x_2}+\psi_1\tau\right)
\widetilde
v\right\Vert^2_{L^2(\Omega)}
=\Vert\widetilde f\Vert^2_{L^2(\Omega)}.\nonumber
\end{eqnarray}
This equality implies
\begin{eqnarray}
\left\Vert \left(\frac{\partial}{\partial
x_1}-i\psi_2\tau\right)\widetilde
v\right\Vert^2_{L^2(\Omega)}
-\tau\int_{\partial\Omega}(\psi_1\nu_1-\psi_2\nu_2)\vert
\widetilde v\vert^2d\sigma
+ \int_{\partial\Omega}i\left(\left(\nu_2
\frac{\partial}{\partial x_1}-\nu_1 \frac{\partial}{\partial
x_2}\right)\widetilde v\right)\overline{\widetilde
v}d\sigma\nonumber\\
+ \left\Vert \left(i \frac{\partial}{\partial x_2}+\psi_1\tau\right)
\widetilde
v\right\Vert^2_{L^2(\Omega)}
= \Vert\widetilde f\Vert^2_{L^2(\Omega)}.\nonumber
\end{eqnarray}

Finally by (2.1) we observe that $\psi_1=\frac
12(\frac{\partial\varphi_1}{\partial x_1}
+\frac{\partial\varphi_2}{\partial x_2})
=\frac{\partial\varphi_1}{\partial x_1}$ and $\psi_2=\frac
12(\frac{\partial\varphi_2}{\partial x_1}
-\frac{\partial\varphi_1}{\partial
x_2})=-\frac{\partial\varphi_1}{\partial x_2}.$ Therefore from
the above equality, (\ref{vika1}) follows immediately.

 Now we prove the statement of the theorem for the equation
(\ref{zina1}).
Since
$2\frac{\partial }{\partial \overline z}-\tau
\frac{\partial\overline\Phi}{\partial \overline z}
=(\frac{\partial}{\partial
x_1}+ i\psi_2\tau)+(-\frac{\partial}{i\partial x_2}-\psi_1\tau)$,
taking the $L^2-$ norms of the right and left hand sides
of (\ref{zina1}) we have

\begin{eqnarray}
\left\Vert \left(\frac{\partial}{\partial x_1}+i\psi_2\tau\right)
\widetilde
v\right\Vert^2_{L^2(\Omega)}
+ 2\mbox{Re}\left(\left( \frac{\partial}{\partial
x_1}+i\psi_2\tau\right)\widetilde v, \left(i \frac{\partial}{\partial
x_2}-\psi_1\tau\right)\widetilde
v\right)_{L^2(\Omega)}\nonumber\\
+ \left\Vert \left(i \frac{\partial}{\partial x_2}-\psi_1\tau\right)
\widetilde
v\right \Vert^2_{L^2(\Omega)}
= \Vert\widetilde f\Vert^2_{L^2(\Omega)}.\nonumber
\end{eqnarray}

Since $[(\frac{\partial}{\partial
x_1}+i\psi_2\tau),(\frac{\partial}{i\partial
x_2}+\psi_1\tau)]\equiv 0$, we obtain
\begin{eqnarray}
\left\Vert
\left(\frac{\partial}{\partial x_1}+i\psi_2\tau\right)\widetilde
v\right\Vert^2_{L^2(\Omega)}
+ \left(\left( \frac{\partial}{\partial
x_1}+i\psi_2\tau\right)\widetilde v,\overline{({i\nu_2\widetilde
v}})\right)_{L^2(\partial\Omega)}
+ \left(\overline{\nu_1\widetilde v},
{\left(i\frac{\partial}{\partial
x_2}-\psi_1\tau\right)\widetilde v}\right)
_{L^2(\partial\Omega)}\nonumber\\
+ \left\Vert \left(i \frac{\partial}{\partial x_2}-\psi_1\tau\right)
\widetilde
v\right\Vert^2_{L^2(\Omega)}
= \Vert\widetilde f\Vert^2_{L^2(\Omega)}.\nonumber
\end{eqnarray}
This equality implies
\begin{eqnarray}
\left\Vert \left(
\frac{\partial}{\partial x_1}+i\psi_2\tau\right)\widetilde
v\right\Vert^2_{L^2(\Omega)}
-\tau\int_{\partial\Omega}(\psi_1\nu_1-\psi_2\nu_2)\vert
\widetilde v\vert^2d\sigma
+ \int_{\partial\Omega}i\left(\left(-\nu_2
\frac{\partial}{\partial x_1}+\nu_1 \frac{\partial}{\partial
x_2}\right)\widetilde v\right)\overline{\widetilde
v}d\sigma\nonumber\\
+ \left\Vert \left(i \frac{\partial}{\partial x_2}-\psi_1\tau\right)
\widetilde
v\right\Vert^2_{L^2(\Omega)}
= \Vert\widetilde f\Vert^2_{L^2(\Omega)}.\nonumber
\end{eqnarray}


Finally we observe that $\psi_1=\frac
12(\frac{\partial\varphi_1}{\partial x_1}
+\frac{\partial\varphi_2}{\partial
x_2})=\frac{\partial\varphi_1}{\partial x_1}$ and $\psi_2=\frac
12(\frac{\partial\varphi_2}{\partial x_1}
-\frac{\partial\varphi_1}{\partial
x_2})=-\frac{\partial\varphi_1}{\partial x_2}.$
Thus estimate (\ref{vika2}) follows immediately from the above 
equality, finishing the proof of the proposition.
\end{proof}



Let $u$ solve the boundary value problem

\begin{equation}\label{suno3}\Delta
u=f\quad\text{in}\,\,\Omega,\quad u\vert_{\partial\Omega}=0.
\end{equation}

Denote
$$
\partial\Omega_{+}=\{(x_1,x_2)\in \partial\Omega\vert
(\nabla\varphi_1,\nu)> 0\}
$$
and
$$
\partial\Omega_{-}=\{(x_1,x_2)\in \partial\Omega\vert
(\nabla\varphi_1,\nu)<0\}.
$$

The main result of this section is the following Carleman estimate
with degenerate weights.

 \begin{theorem}\label{Theorem 2.1}
Suppose that $\Phi$ satisfies (\ref{zzz}) and (\ref{mika}).
Let $f\in L^2(\Omega)$, and let $u$ be a solution to (\ref{suno3}) with
$u\in H^1(\Omega).$  Then there exist  positive
constants $C>0$  and $\tau_0$ such that for all $\tau\ge \tau_0$:
\begin{eqnarray}\label{suno4}
\tau\Vert
ue^{\tau\varphi_1}\Vert^2_{L^2(\Omega)}
+ \Vert ue^{\tau\varphi_1}\Vert^2_{H^1(\Omega)}
+ \tau^2\left\Vert\left\vert\frac{\partial\Phi}{\partial z} \right\vert
ue^{\tau\varphi_1}\right\Vert^2_{L^2(\Omega)}
- \tau\int_{\partial\Omega_-}(\nu,\nabla\varphi_1)\left\vert
\frac{\partial u}{\partial\nu}\right\vert^2e^{2\tau\varphi_1}d\sigma
\nonumber\\
\le C\left(\Vert fe^{\tau\varphi_1}\Vert^2_{L^2(\Omega)}+\tau
\int_{\partial\Omega_+}(\nu,\nabla\varphi_1)\left\vert \frac{\partial
u}{\partial\nu}\right\vert^2e^{2\tau\varphi_1}d\sigma\right).
\end{eqnarray}
\end{theorem}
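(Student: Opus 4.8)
The plan is to transfer the exact first-order identities of Proposition~\ref{Proposition 2.1} to three auxiliary functions and then close the resulting loop. Put $\widetilde v=e^{\tau\varphi_1}u$ and $\widetilde f=e^{\tau\varphi_1}f$, so that $\Delta_\tau\widetilde v=\widetilde f$ and $\widetilde v|_{\partial\Omega}=0$. Using the two orderings in the factorization (\ref{factorization}), set $\widetilde w=(2\partial/\partial\overline z-\tau\,\partial\overline\Phi/\partial\overline z)\widetilde v$ and $\widetilde w'=(2\partial/\partial z-\tau\,\partial\Phi/\partial z)\widetilde v$. Since $\partial_{\overline z}\overline\Phi=2\partial_{\overline z}\varphi_1$ one finds $\widetilde w=2e^{\tau\varphi_1}\partial_{\overline z}u$ and $\widetilde w'=2e^{\tau\varphi_1}\partial_z u$; consequently $\widetilde w$ solves (\ref{zina}) with datum $\widetilde f$, $\widetilde w'$ solves (\ref{zina1}) with datum $\widetilde f$, and $\widetilde v$ solves (\ref{zina}) and (\ref{zina1}) with data $\widetilde w'$ and $\widetilde w$ respectively.

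First I would apply (\ref{vika1}) and (\ref{vika2}) to $\widetilde v$: all boundary integrals vanish because $\widetilde v|_{\partial\Omega}=0$. By the Cauchy--Riemann identities $\psi_1=\partial_{x_2}\varphi_2$, $\psi_2=\partial_{x_1}\varphi_2$ noted after Proposition~\ref{Proposition 2.1}, the operators there factor as $\partial_{x_1}\mp i\tau\psi_2=e^{\pm i\tau\varphi_2}\partial_{x_1}(e^{\mp i\tau\varphi_2}\,\cdot\,)$ and $i\partial_{x_2}\pm\tau\psi_1=i\,e^{\pm i\tau\varphi_2}\partial_{x_2}(e^{\mp i\tau\varphi_2}\,\cdot\,)$, so the left sides of (\ref{vika1}),(\ref{vika2}) become $\|\nabla(e^{-i\tau\varphi_2}\widetilde v)\|_{L^2(\Omega)}^2$ and $\|\nabla(e^{i\tau\varphi_2}\widetilde v)\|_{L^2(\Omega)}^2$; adding them the cross terms cancel and, as $|\nabla\varphi_2|=|\partial\Phi/\partial z|$, we get the exact identity
\[
\|\nabla\widetilde v\|_{L^2(\Omega)}^2+\tau^2\bigl\|\,|\partial\Phi/\partial z|\,\widetilde v\,\bigr\|_{L^2(\Omega)}^2=\tfrac12\bigl(\|\widetilde w\|_{L^2(\Omega)}^2+\|\widetilde w'\|_{L^2(\Omega)}^2\bigr).
\]

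Next, apply (\ref{vika1}) to $\widetilde w$ and (\ref{vika2}) to $\widetilde w'$; the same factorization turns the volume parts of the left sides into $\|\nabla(e^{-i\tau\varphi_2}\widetilde w)\|_{L^2(\Omega)}^2$ and $\|\nabla(e^{i\tau\varphi_2}\widetilde w')\|_{L^2(\Omega)}^2$. Because $\widetilde v|_{\partial\Omega}=0$ one has $\widetilde w|_{\partial\Omega}=(\nu_1+i\nu_2)e^{\tau\varphi_1}\partial u/\partial\nu$ and $\widetilde w'|_{\partial\Omega}=(\nu_1-i\nu_2)e^{\tau\varphi_1}\partial u/\partial\nu$, so $|\widetilde w|^2=|\partial u/\partial\nu|^2e^{2\tau\varphi_1}$ on $\partial\Omega$, and a curvature computation on $\partial\Omega=S^1$ (where $\partial_\tau\nu_1=\nu_2$, $\partial_\tau\nu_2=-\nu_1$) gives $\mathrm{Re}\int_{\partial\Omega}i\bigl((\nu_2\partial_{x_1}-\nu_1\partial_{x_2})\widetilde w\bigr)\overline{\widetilde w}\,d\sigma=\int_{\partial\Omega}|\partial u/\partial\nu|^2e^{2\tau\varphi_1}\,d\sigma$. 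Hence (\ref{vika1}) for $\widetilde w$ reduces to $\|\nabla(e^{-i\tau\varphi_2}\widetilde w)\|_{L^2(\Omega)}^2=\|\widetilde f\|_{L^2(\Omega)}^2+\int_{\partial\Omega}(\tau(\nabla\varphi_1,\nu)-1)|\partial u/\partial\nu|^2e^{2\tau\varphi_1}d\sigma$, and similarly for $\widetilde w'$; splitting the last integral over $\partial\Omega_\pm$, moving the $\partial\Omega_-$ piece (where $\tau(\nabla\varphi_1,\nu)-1<-1$) to the left and using $\tau(\nabla\varphi_1,\nu)-1\le\tau(\nabla\varphi_1,\nu)$ on $\partial\Omega_+$ yields
\[
\|\nabla(e^{-i\tau\varphi_2}\widetilde w)\|_{L^2(\Omega)}^2+\|\nabla(e^{i\tau\varphi_2}\widetilde w')\|_{L^2(\Omega)}^2-2\tau\!\int_{\partial\Omega_-}\!(\nu,\nabla\varphi_1)\Bigl|\tfrac{\partial u}{\partial\nu}\Bigr|^2e^{2\tau\varphi_1}d\sigma\le 2\|\widetilde f\|_{L^2(\Omega)}^2+2\tau\!\int_{\partial\Omega_+}\!(\nu,\nabla\varphi_1)\Bigl|\tfrac{\partial u}{\partial\nu}\Bigr|^2e^{2\tau\varphi_1}d\sigma ,
\]
whose right side, call it $R$, and whose $\partial\Omega_-$-boundary term are precisely those in (\ref{suno4}).

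It then remains to bound $\|\widetilde w\|_{L^2(\Omega)}^2+\|\widetilde w'\|_{L^2(\Omega)}^2$ (equivalently, by the first display, $\|\nabla\widetilde v\|^2+\tau^2\||\partial\Phi/\partial z|\widetilde v\|^2$, whence $\|ue^{\tau\varphi_1}\|_{H^1(\Omega)}^2$ by the Poincar\'e inequality since $\widetilde v|_{\partial\Omega}=0$) and the remaining term $\tau\|ue^{\tau\varphi_1}\|_{L^2(\Omega)}^2$, all by $R$. This is the step that uses the non-degeneracy (\ref{mika}) and finiteness (\ref{mona}) of the critical set $\mathcal H$, and I expect it to be the main obstacle: the identities above control only the gradients $\|\nabla(e^{\mp i\tau\varphi_2}\widetilde w)\|^2$, so one needs a Poincar\'e-type passage to the $L^2(\Omega)$-norms together with the gain of one power of $\tau$ in the $L^2$-estimate for $\widetilde v$, and both degenerate exactly where $\partial\Phi/\partial z$ vanishes. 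Away from a fixed neighbourhood of $\mathcal H$ one has $|\partial\Phi/\partial z|\ge c>0$, so $\tau^2\||\partial\Phi/\partial z|\widetilde v\|^2$ dominates $\tau\|\widetilde v\|^2$ for $\tau\ge\tau_0$ and an interior elliptic estimate for $\Delta_\tau\widetilde v=\widetilde f$ controls $\|\widetilde w\|^2+\|\widetilde w'\|^2$ there. Near a critical point $z_0\in\mathcal H$ one passes, using (\ref{mika}), to a holomorphic coordinate in which $\Phi-\Phi(z_0)=w^2$, so that $\varphi_1=\mathrm{Re}\,w^2$ and $|\partial\Phi/\partial z|\sim|w|$, and proves directly the local inequality $\tau\int_{|w|<\delta}|\widetilde v|^2\le C(\|\widetilde f\|_{L^2}^2+\|\widetilde v\|_{H^1}^2+\tau^2\int|w|^2|\widetilde v|^2)+(\text{terms on }|w|=\delta)$; the $L^2(\partial\Omega)$-trace of $e^{-i\tau\varphi_2}\widetilde w$ needed in the Poincar\'e step is handled by noting that this function solves $\partial(\cdot)/\partial z=\tfrac12e^{-i\tau\varphi_2}\widetilde f$, hence is antiholomorphic up to an $L^2(\Omega)$-error of size $\|\widetilde f\|_{L^2(\Omega)}$, together with its explicit boundary values. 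Gluing the interior estimate and the finitely many local estimates with a partition of unity---the commutator terms so produced being of lower order and absorbable into the left side for $\tau$ large---and combining with the two displays yields (\ref{suno4}).
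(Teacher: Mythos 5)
Your proposal follows essentially the same route as the paper's proof: factorize $\Delta_\tau$, apply the first-order identities of Proposition~\ref{Proposition 2.1} to $\widetilde w_1,\widetilde w_2$, compute the boundary terms explicitly from $\widetilde w_j|_{\partial\Omega}=(\nu_1\pm i\nu_2)\partial_\nu\widetilde v$, and close with a Poincar\'e-type bound on $\Vert\widetilde w_1\Vert^2+\Vert\widetilde w_2\Vert^2$ together with the estimate $\tau\Vert\widetilde v\Vert^2_{L^2}\le C(\Vert\widetilde v\Vert^2_{H^1}+\tau^2\Vert\,|\partial_z\Phi|\,\widetilde v\Vert^2_{L^2})$ coming from the non-degeneracy of the critical points. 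Your variants of the individual steps are refinements rather than a different argument --- applying Proposition~\ref{Proposition 2.1} to $\widetilde v$ itself turns the paper's inequality (\ref{(2.22)}) into an exact identity without needing $u$ real, and the two steps you flag as the main obstacles are precisely the ones the paper also leaves at the level of assertion, namely (\ref{(2.21)}) and (\ref{(2.23)}), for which your sketches (localization near $\mathcal H$ in coordinates where $\Phi-\Phi(z_0)=w^2$, and the trace argument for the Poincar\'e step) are consistent with what is required.
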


\begin{proof} As indicated earlier we can take $\Omega$ to be the unit ball.
Denote $\widetilde v=ue^{\tau\varphi_1}.$ Without the loss of generality we may assume that $u$ is a real valued function.
By (\ref{factorization})
$$
\Delta_\tau \widetilde v=
\left(2\frac{\partial}{\partial z}-\tau\frac{\partial\Phi}{\partial
z}\right)\left(2\frac{\partial}{\partial\overline z}
-\tau\frac{\partial\overline\Phi}
{\partial \overline z}\right)\widetilde v
= \left(2\frac{\partial}{\partial\overline z}
-\tau\frac{\partial\overline\Phi} {\partial \overline
z}\right)\left(2\frac{\partial}{\partial z}-\tau\frac{\partial\Phi}
{\partial z}\right)\widetilde v = fe^{\tau\varphi_1}.
$$

Denote $\widetilde w_1=(2\frac{\partial}{\partial\overline z}
-\tau\frac{\partial\overline\Phi} {\partial \overline z})\widetilde v,
\widetilde w_2=(2\frac{\partial}{\partial z} -\tau\frac{\partial\Phi}
{\partial  z})\widetilde v$ and $\frac{\partial\Phi}{\partial
z}=\psi_1(x_1,x_2)+i\psi_2(x_1,x_2).$
Thanks to the boundary condition (\ref{suno3}), we have
$$
\widetilde w_1\vert_{\partial\Omega}=2\partial_{\overline z}\widetilde
v\vert_{\partial\Omega}=(\nu_1+i\nu_2)\frac{\partial
\widetilde v}{\partial \nu}\vert_{\partial\Omega},\,\, \widetilde
w_2\vert_{\partial\Omega}=2\partial_{ z}\widetilde
v\vert_{\partial\Omega}=(\nu_1-i\nu_2)\frac{\partial \widetilde
v}{\partial \nu}\vert_{\partial\Omega}.
$$

By Proposition \ref{Proposition 2.1}
\begin{eqnarray}
\left\Vert \left( \frac{\partial}{\partial
x_1}-i\psi_2\tau\right)\widetilde w_1\right\Vert^2_{L^2(\Omega)}
- \tau\int_{\partial\Omega}(\nabla\varphi_1,\nu)\left\vert
\frac{\partial\widetilde
v}{\partial\nu}\right\vert^2d\sigma
+ \mbox{Re}\int_{\partial\Omega}i\left(\left(\nu_2
\frac{\partial}{\partial x_1}-\nu_1 \frac{\partial}{\partial
x_2}\right)\widetilde w_1\right)\overline{\widetilde
w_1}d\sigma                      \nonumber\\
+ \left\Vert \left(i\frac{\partial}{\partial x_2}+\psi_1\tau\right)
\widetilde
w_1\right\Vert^2_{L^2(\Omega)}=\Vert
fe^{\tau\varphi_1}\Vert^2_{L^2(\Omega)}\nonumber
\end{eqnarray}
and
\begin{eqnarray}
\left\Vert \left(\frac{\partial}{\partial
x_1}+i\psi_2\tau\right)\widetilde w_2\right\Vert^2_{L^2(\Omega)}
- \tau\int_{\partial\Omega}(\nabla\varphi_1,\nu)\left\vert
\frac{\partial\widetilde
v}{\partial\nu}\right\vert^2d\sigma
+ \mbox{Re}\int_{\partial\Omega}i\left(\left(-\nu_2
\frac{\partial}{\partial x_1}+\nu_1 \frac{\partial}{\partial
x_2}\right)\widetilde w_2\right)\overline{\widetilde
w_2}d\sigma\nonumber\\
+ \left\Vert \left(i\frac{\partial}{\partial x_2}-\psi_1\tau\right)
\widetilde
w_2\right\Vert^2_{L^2(\Omega)}
= \Vert
fe^{\tau\varphi_1}\Vert^2_{L^2(\Omega)}.\nonumber
\end{eqnarray}

Let us simplify the integral $\mbox{Re}\thinspace
i\int_{\partial\Omega} \left(\left(\nu_2
\frac{\partial}{\partial x_1}-\nu_1 \frac{\partial}{\partial
x_2}\right)\widetilde w_1\right)\overline{\widetilde w_1}d\sigma.$
We recall that $\widetilde v=ue^{\tau \varphi_1}$ and $\widetilde
w_1=(\nu_1+i\nu_2)\frac{\partial \widetilde v}{\partial
\nu}=(\nu_1+i\nu_2)\frac{\partial u}{\partial \nu}
e^{\tau \varphi_1}.$  Thus
\begin{eqnarray}
\mbox{Re}\int_{\partial\Omega}i\left(\left(\nu_2
\frac{\partial}{\partial x_1}-\nu_1 \frac{\partial}{\partial
x_2}\right)\widetilde w_1\right)\overline{\widetilde
w_1}d\sigma=\nonumber\\
\mbox{Re}\int_{\partial\Omega}i\left(\left(\nu_2
\frac{\partial}{\partial
x_1}-\nu_1 \frac{\partial}{\partial x_2}\right)
\left[(\nu_1+i\nu_2)\frac{\partial
u}{\partial\nu}e^{\tau \varphi_1}\right]\right)
(\nu_1-i\nu_2)\frac{\partial u}{\partial\nu}e^{\tau\varphi_1}d\sigma=\nonumber\\
\mbox{Re}\int_{\partial\Omega}i\left[\left(\nu_2
\frac{\partial}{\partial
x_1}-\nu_1 \frac{\partial}{\partial x_2}\right)(\nu_1+i\nu_2)\right]
\left\vert\frac{\partial
\widetilde v}{\partial\nu}\right\vert^2
(\nu_1-i\nu_2)d\sigma+\nonumber\\
\mbox{Re}\int_{\partial\Omega}\frac 12i\left(\nu_2
\frac{\partial}{\partial x_1}-\nu_1 \frac{\partial}{\partial
x_2}\right)\left\vert\frac{\partial \widetilde v}{\partial\nu}
\right\vert^2 d\sigma=
\int_{\partial\Omega} \left\vert\frac{\partial \widetilde v}
{\partial\nu}\right\vert^2
d\sigma.\nonumber\end{eqnarray}

Let us simplify the integral
$\mbox{Re}\int_{\partial\Omega}i\left(\left(-\nu_2 \frac{\partial}
{\partial x_1}+\nu_1 \frac{\partial}{\partial x_2}\right)\widetilde w_2
\right)\overline{\widetilde
w_2}d\sigma.$ We recall that $\widetilde v=ue^{\tau \varphi_1}$ and
$\widetilde w_2=(\nu_1-i\nu_2)\frac{\partial \widetilde v}{\partial
\nu}=(\nu_1-i\nu_2)\frac{\partial u}{\partial \nu} e^{\tau
\varphi_1}.$ We conclude
\begin{eqnarray}
\mbox{Re}\int_{\partial\Omega}i\left(\left(-\nu_2 \frac{\partial}
{\partial
x_1}+\nu_1 \frac{\partial}{\partial x_2}\right)\widetilde w_2\right)
\overline{\widetilde w_2}d\sigma=\nonumber\\
\mbox{Re}\int_{\partial\Omega}i\left(\left(-\nu_2 \frac{\partial}
{\partial x_1}+\nu_1 \frac{\partial}{\partial x_2}\right)\left[
(\nu_1-i\nu_2)\frac{\partial u}{\partial\nu}e^{\tau \varphi_1}\right]\right)
(\nu_1+i\nu_2)\frac{\partial u}{\partial\nu}e^{\tau\varphi_1}d\sigma=\\
\mbox{Re}\int_{\partial\Omega}i\left[\left(-\nu_2 \frac{\partial}
{\partial x_1}+\nu_1 \frac{\partial}{\partial x_2}\right)(\nu_1-i\nu_2)\right]
\left\vert\frac{\partial
\widetilde v}{\partial\nu}\right\vert^2 (\nu_1+i\nu_2)d\sigma-\nonumber\\
\mbox{Re}\int_{\partial\Omega}\frac 12i\left(\nu_2
\frac{\partial}{\partial x_1}-\nu_1 \frac{\partial}{\partial
x_2}\right)\left\vert\frac{\partial \widetilde v}{\partial\nu}
\right\vert^2 d\sigma=
\int_{\partial\Omega} \left\vert\frac{\partial \widetilde v}
{\partial\nu}\right\vert^2
d\sigma.\nonumber
\end{eqnarray}
Using the above formulae we obtain
\begin{eqnarray} \label{suno5}
\left\Vert \left( \frac{\partial}{\partial
x_1}+i\psi_2\tau\right)\widetilde w_2\right\Vert^2_{L^2(\Omega)}
+ \left\Vert
\left(i\frac{\partial}{\partial x_2}-\psi_1\tau\right)\widetilde
w_2\right\Vert^2_{L^2(\Omega)}
- 2\tau \int_{\partial\Omega}(\nu,\nabla\varphi_1)\left\vert
\frac{\partial\widetilde v}{\partial\nu}\right\vert^2d\sigma\nonumber\\
+ \left\Vert \left( \frac{\partial}{\partial x_1}-i\psi_2\tau\right)
\widetilde
w_1\right\Vert^2_{L^2(\Omega)}
+ \left\Vert \left(i \frac{\partial}{\partial
x_2}+\psi_1\tau\right)\widetilde w_1\right\Vert^2_{L^2(\Omega)}
\nonumber\\
+ 2\int_{\partial\Omega} \left\vert\frac{\partial \widetilde v}
{\partial\nu}\right\vert^2 d\sigma = 2\Vert
fe^{\tau\varphi_1}\Vert^2_{L^2(\Omega)}.
\end{eqnarray}

Let a function $\widetilde\psi_k$ satisfy
$$
\frac{\partial\widetilde \psi_1}{\partial
x_1}=\psi_2,\quad\frac{\partial\widetilde \psi_2}{\partial
x_2}=\psi_1\quad \mbox{in}\,\,\Omega.
$$
We can rewrite equality (\ref{suno5}) in the form
\begin{eqnarray}\label{(2.20)}
\left\Vert  \frac{\partial}{\partial
x_1}(e^{i\widetilde\psi_1\tau}\widetilde w_2)\right\Vert^2_{L^2(\Omega)}
+ \left\Vert
\frac{\partial}{\partial x_2}(e^{i\widetilde\psi_2\tau}\widetilde
w_2)\right\Vert^2_{L^2(\Omega)}
- 2\tau\int_{\partial\Omega}(\nu,\nabla\varphi_1)\left\vert
\frac{\partial\widetilde v}{\partial\nu}\right\vert^2d\sigma\nonumber\\
+ \left\Vert  \frac{\partial}{\partial x_1}(e^{-i\widetilde\psi_1\tau}
\widetilde w_1)\right\Vert^2_{L^2(\Omega)}
+ \left\Vert \frac{\partial}{\partial
x_2}(e^{-i\widetilde\psi_2\tau}\widetilde w_1)\right\Vert^2_{L^2(\Omega)}
\nonumber\\
+ 2\int_{\partial\Omega} \left\vert\frac{\partial \widetilde v}
{\partial\nu}\right\vert^2 d\sigma = {2}\Vert
fe^{\tau\varphi_1}\Vert^2_{L^2(\Omega)}.
\end{eqnarray}
Observe that there exists some positive constant $C>0$, independent of
$\tau$ such that
\begin{eqnarray}\label{(2.21)}
\frac 1C(\Vert\widetilde w_1\Vert^2_{L^2(\Omega)}+\Vert\widetilde
w_2\Vert^2_{L^2(\Omega)})
\le \frac 12 \left\Vert  \frac{\partial}{\partial
x_1}(e^{i\widetilde\psi_2\tau}\widetilde w_2)\right\Vert^2_{L^2(\Omega)}
+ \frac12\left\Vert \frac{\partial}{\partial x_2}(e^{i\widetilde\psi_1\tau}
\widetilde w_2)\right\Vert^2_{L^2(\Omega)}\nonumber\\
- \tau
\int_{\partial\Omega_-}(\nu,\nabla\varphi_1)\left\vert
\frac{\partial\widetilde v}{\partial\nu}\right\vert^2d\sigma\nonumber\\
+ \frac 12\left\Vert \frac{\partial}{\partial
x_1}(e^{-i\widetilde\psi_1\tau}\widetilde w_1)\right\Vert^2
_{L^2(\Omega)}
+ \frac12\left\Vert \frac{\partial}{\partial x_2}
(e^{-i\widetilde\psi_2\tau}\widetilde w_1)\right\Vert^2_{L^2(\Omega)}.
\end{eqnarray}

Since $\widetilde v$ is a real-valued function, we have
$$
\left\Vert \frac{\partial \widetilde v}{\partial x_1}
+ \tau\psi_1\widetilde v\right\Vert^2_{L^2(\Omega)}
+ \left\Vert \frac{\partial \widetilde v}{\partial
x_2}-\tau\psi_2\widetilde v\right\Vert^2_{L^2(\Omega)}
\le C_0(\Vert\widetilde
w_1\Vert^2_{L^2(\Omega)}+\Vert\widetilde w_2\Vert^2_{L^2(\Omega)}).
$$
Therefore
\begin{eqnarray}\label{(2.22)}
\left\Vert \frac{\partial \widetilde v}{\partial
x_1}\right\Vert^2_{L^2(\Omega)}
- \tau\int_\Omega\left(\frac{\partial\psi_1}{\partial x_1}
- \frac{\partial\psi_2}{\partial x_2}\right)\widetilde v^2dx\nonumber
+\Vert\tau\psi_1\widetilde v\Vert^2_{L^2(\Omega)}\\
+ \left\Vert \frac{\partial
\widetilde v}{\partial x_2}\right\Vert^2_{L^2(\Omega)}
+ \Vert\tau\psi_2\widetilde
v\Vert^2_{L^2(\Omega)} \le C_1(\Vert\widetilde
w_1\Vert^2_{L^2(\Omega)}+\Vert\widetilde w_2\Vert^2_{L^2(\Omega)}).
\end{eqnarray}

By the Cauchy-Riemann equations, the second term of the left hand side 
of (\ref{(2.22)}) is zero.

Now since by assumption (\ref{mika}) the function $\Phi$ has only non degenerate critical points, we have
\begin{equation}\label{(2.23)}
\tau\Vert \widetilde v\Vert^2_{L^2(\Omega)}
\le C\left(\Vert \widetilde
v\Vert^2_{H^1(\Omega)}
+ \tau^2\left\Vert\left\vert\frac{\partial\Phi}{\partial
z} \right\vert \widetilde v\right\Vert^2_{L^2(\Omega)}\right).
\end{equation}

By (\ref{(2.22)}) and (\ref{(2.23)})
\begin{equation}\label{(2.24)}
\tau\Vert \widetilde v\Vert^2_{L^2(\Omega)}+\Vert \widetilde
v\Vert^2_{H^1(\Omega)}
+ \tau^2\left\Vert\left\vert\frac{\partial\Phi}{\partial
z} \right\vert \widetilde v\right\Vert^2_{L^2(\Omega)}\le C_1(\Vert\widetilde
w_1\Vert^2_{L^2(\Omega)}+\Vert\widetilde w_2\Vert^2_{L^2(\Omega)}).
\end{equation}
Using (\ref{(2.24)}), we obtain from (\ref{(2.20)}) and (\ref{(2.21)})
that 
\begin{eqnarray}
\frac{1}{C_5}\left(
\tau\Vert \widetilde v\Vert^2_{L^2(\Omega)}+\Vert \widetilde
v\Vert^2_{H^1(\Omega)}
+ \tau^2\left\Vert\left\vert\frac{\partial\Phi}{\partial
z} \right\vert\widetilde v\right\Vert^2_{L^2(\Omega)}\right)
- 2\tau\int_{\partial\Omega}(\nu,\nabla\varphi_1)\left\vert
\frac{\partial\widetilde v}{\partial\nu}\right\vert^2d\sigma\nonumber
\\
+2\int_{\partial\Omega}
\left\vert\frac{\partial \widetilde v}{\partial\nu}\right\vert^2
d\sigma
\le 2\Vert fe^{s\varphi_1}\Vert^2_{L^2(\Omega)}-\tau
\int_{\partial\Omega_-}(\nu,\nabla\varphi_1)\left\vert
\frac{\partial\widetilde v}{\partial\nu}\right\vert^2d\sigma\nonumber
\end{eqnarray}
concluding the proof of the theorem.
\end{proof}
We note that in the theorem we can add a zeroth order term to the
Laplacian and the estimate is valid for large enough $\tau.$

As usual the Carleman estimate implies the existence of solutions  
for the Schr\"odinger equation satisfying estimates with
appropriate weights.

Consider the following problem
\begin{equation}\label{(2.26)}
\Delta u+q_0u=f\quad \mbox{in}\,\,\Omega,\quad
u\vert_{\widetilde\Gamma}=g,
\end{equation}
where $\overline
{\widetilde\Gamma}\subset
\{x\in \partial\Omega\vert (\nu,\nabla\varphi_1)<0\}.$
We have
\begin{proposition}\label{Proposition 2.3}
Let $q_0\in L^\infty(\Omega).$ There exists
$\tau_0>0$ such that for all $\tau>\tau_0$  there exists a solution
to problem (\ref{(2.26)}) such that \begin{equation} \label{(2.27)}\Vert
ue^{-\tau\varphi_1}\Vert_{L^2(\Omega)}\le C(\Vert
fe^{-\tau\varphi_1}\Vert_{L^2(\Omega)}+\Vert ge^{-\tau\varphi_1}
\Vert_{L^2(\widetilde \Gamma)})/\tau^\frac 12,
\end{equation}
\end{proposition}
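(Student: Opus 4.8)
The plan is to derive the solvability of \eqref{(2.26)}, together with the weighted bound \eqref{(2.27)}, from the Carleman estimate of Theorem~\ref{Theorem 2.1} by the standard duality argument (Hahn--Banach together with Riesz representation). The a priori estimate we invoke is that of Theorem~\ref{Theorem 2.1} applied to $\Delta+q_0$; this is legitimate for $\tau$ large because adding the lower order term $q_0$ perturbs the right-hand side only by $\Vert q_0\Vert_{L^\infty(\Omega)}\Vert we^{\tau\varphi_1}\Vert_{L^2(\Omega)}$, which is absorbed by the term $\tau\Vert we^{\tau\varphi_1}\Vert^2_{L^2(\Omega)}$ on the left (the remark following that theorem). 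Thus, setting
\[
\Vert w\Vert_X^2:=\Vert e^{\tau\varphi_1}(\Delta+q_0)w\Vert^2_{L^2(\Omega)}+\tau\int_{\partial\Omega_+}(\nu,\nabla\varphi_1)\Bigl\vert\frac{\partial w}{\partial\nu}\Bigr\vert^2e^{2\tau\varphi_1}\,d\sigma,
\]
there is $\tau_0>0$ so that for all $\tau\ge\tau_0$ and all $w$ in the test space $Y:=\{w\in H^2(\Omega):\ w\vert_{\partial\Omega}=0\}$,
\[
\tau\Vert we^{\tau\varphi_1}\Vert^2_{L^2(\Omega)}+\tau\int_{\partial\Omega_-}\bigl\vert(\nu,\nabla\varphi_1)\bigr\vert\Bigl\vert\frac{\partial w}{\partial\nu}\Bigr\vert^2e^{2\tau\varphi_1}\,d\sigma\le C\Vert w\Vert_X^2,
\]
the left side being nonnegative since $(\nu,\nabla\varphi_1)<0$ on $\partial\Omega_-$; in particular $\Vert\cdot\Vert_X$ is a norm on $Y$. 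It is exactly because $\overline{\widetilde\Gamma}$ lies in $\partial\Omega_-$, where the above boundary term is a controlling quantity, that nonhomogeneous Dirichlet data $g$ can be accommodated.

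The heart of the matter is an estimate for the linear functional
\[
\ell(w):=\int_\Omega f\,w\,dx+\int_{\widetilde\Gamma}g\,\frac{\partial w}{\partial\nu}\,d\sigma,\qquad w\in Y,
\]
which is well defined because $w\in H^2(\Omega)$ gives $\partial_\nu w\vert_{\partial\Omega}\in H^{1/2}(\partial\Omega)\hookrightarrow L^2(\widetilde\Gamma)$. I would prove
\[
\vert\ell(w)\vert\le\frac{C}{\sqrt\tau}\Bigl(\Vert fe^{-\tau\varphi_1}\Vert_{L^2(\Omega)}+\Vert ge^{-\tau\varphi_1}\Vert_{L^2(\widetilde\Gamma)}\Bigr)\Vert w\Vert_X .
\]
For the volume term one writes $\int_\Omega fw=\int_\Omega(fe^{-\tau\varphi_1})(we^{\tau\varphi_1})$ and combines Cauchy--Schwarz with $\tau\Vert we^{\tau\varphi_1}\Vert^2_{L^2(\Omega)}\le C\Vert w\Vert_X^2$. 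For the boundary term one uses $\overline{\widetilde\Gamma}\subset\{(\nu,\nabla\varphi_1)<0\}$ to fix $c_0>0$ with $\vert(\nu,\nabla\varphi_1)\vert\ge c_0$ on $\widetilde\Gamma$, so that
\[
\int_{\widetilde\Gamma}\vert g\vert\,\Bigl\vert\frac{\partial w}{\partial\nu}\Bigr\vert\,d\sigma\le\Vert ge^{-\tau\varphi_1}\Vert_{L^2(\widetilde\Gamma)}\Bigl(c_0^{-1}\int_{\partial\Omega_-}\bigl\vert(\nu,\nabla\varphi_1)\bigr\vert\Bigl\vert\frac{\partial w}{\partial\nu}\Bigr\vert^2e^{2\tau\varphi_1}\,d\sigma\Bigr)^{1/2}\le\frac{C}{\sqrt\tau}\Vert ge^{-\tau\varphi_1}\Vert_{L^2(\widetilde\Gamma)}\Vert w\Vert_X,
\]
the last step again by the Carleman estimate. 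This is the point at which the degeneration of the weight is felt: the argument only closes when $\widetilde\Gamma$ is strictly interior to the part of $\partial\Omega$ on which $(\nu,\nabla\varphi_1)$ keeps a fixed sign.

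To conclude, consider the map $T:w\mapsto\bigl(e^{\tau\varphi_1}(\Delta+q_0)w,\ \partial_\nu w\vert_{\partial\Omega_+}\bigr)$ from $Y$ into $\mathcal Z:=L^2(\Omega)\times L^2\bigl(\partial\Omega_+;\ \tau(\nu,\nabla\varphi_1)e^{2\tau\varphi_1}\,d\sigma\bigr)$; by construction $\Vert Tw\Vert_{\mathcal Z}=\Vert w\Vert_X$, and $T$ is injective because $\Vert w\Vert_X=0$ forces $w=0$ by the Carleman estimate. Hence $Tw\mapsto\ell(w)$ is a well-defined bounded linear functional on $T(Y)$; extending it to $\mathcal Z$ by Hahn--Banach with the same norm and representing by Riesz produces $u\in L^2(\Omega)$ with $\Vert ue^{-\tau\varphi_1}\Vert_{L^2(\Omega)}\le\frac{C}{\sqrt\tau}(\Vert fe^{-\tau\varphi_1}\Vert_{L^2(\Omega)}+\Vert ge^{-\tau\varphi_1}\Vert_{L^2(\widetilde\Gamma)})$ --- which is \eqref{(2.27)} --- and an identity of the form
\[
\int_\Omega(\Delta+q_0)w\,u\,dx=\ell(w)-\tau\int_{\partial\Omega_+}(\nu,\nabla\varphi_1)\,\frac{\partial w}{\partial\nu}\,\overline{F}\,e^{2\tau\varphi_1}\,d\sigma,\qquad w\in Y,
\]
for some $F$ in the second factor of $\mathcal Z$. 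Testing with $w\in C_0^\infty(\Omega)$ (both boundary integrals vanish) gives $(\Delta+q_0)u=f$ in $\mathcal D'(\Omega)$, hence $\Delta u\in L^2(\Omega)$ and the trace $u\vert_{\partial\Omega}\in H^{-1/2}(\partial\Omega)$ is defined; feeding Green's identity for such $u$ into the displayed relation and restricting to those $w$ whose normal derivative vanishes on $\partial\Omega_+$ then forces $u\vert_{\partial\Omega}=g$ on $\widetilde\Gamma$. Thus $u$ solves \eqref{(2.26)}.

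The step I expect to be the real work is not conceptual but a matter of bookkeeping imposed by the degenerate weight and the low regularity of $u$. The solution is only in $L^2(\Omega)$, so the Dirichlet condition on $\widetilde\Gamma$ must be read through Green's formula for $L^2$ functions with $L^2$ Laplacian rather than through an honest trace; and since the Carleman estimate controls $\partial_\nu w$ only on $\partial\Omega_-$, the integral over $\partial\Omega_+$ is carried along as the auxiliary component $F$ of the Riesz representation, leaving the constructed $u$ unconstrained on $\partial\Omega_+$. This is harmless for the remainder of the paper, where $\widetilde\Gamma$ will be chosen so that the complex geometrical optics solution built from $u$ vanishes on precisely the portion of $\partial\Omega$ required in Theorem~\ref{main}.
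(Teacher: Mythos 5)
Your proposal is correct and follows essentially the same route as the paper: both derive the result from the Carleman estimate of Theorem~\ref{Theorem 2.1} (with the zeroth-order term absorbed for large $\tau$) by bounding the functional $w\mapsto\int_\Omega fw\,dx+\int_{\widetilde\Gamma}g\,\partial_\nu w\,d\sigma$ and applying a Riesz-type representation. The only cosmetic difference is that the paper imposes $\partial_\nu v\vert_{\partial\Omega_+}=0$ directly on its test Hilbert space $H$ so the $\partial\Omega_+$ term disappears, whereas you keep the full test space and carry that term as an auxiliary component via Hahn--Banach; your version also fixes a sign typo in the paper's proof, which writes $e^{\tau\varphi_1}$ where the weights in \eqref{(2.27)} require $e^{-\tau\varphi_1}$.
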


\begin{proof}
Let us introduce the space
$$
H= \left\{v\in H^1_0(\Omega)\vert \thinspace \Delta v+q_0v\in
L^2(\Omega),\frac{\partial v}{\partial
\nu}\vert_{\partial\Omega_+}=0 \right\}
$$
with the scalar product
$$
(v_1,v_2)_H=\int_\Omega e^{2\tau\varphi_1}(\Delta
v_1+q_0v_1)(\Delta v_2+q_0v_2)dx.
$$
By  Proposition \ref{Theorem 2.1} $H$ is
a Hilbert space. Consider the linear functional
on $H:$ $v\rightarrow \int_\Omega vfdx+\int_{\tilde\Gamma}g\frac{\partial v}{\partial\nu}d\sigma.$ By (\ref{suno4}) this is the
continuous linear functional with the norm estimated by a constant
$C(\Vert fe^{\tau\varphi_1}\Vert_{L^2(\Omega)}+\Vert ge^{\tau\varphi_1}\Vert_{L^2(\tilde \Gamma)})/\tau^{\frac{1}{2}}.$
Therefore by the Riesz theorem there exists an
element $\widehat v \in H$ so that
$$
\int_\Omega vfdx+\int_{\tilde\Gamma}g\frac{\partial v}{\partial\nu}d\sigma=\int_\Omega e^{2\tau\varphi_1}(\Delta\widehat v
+q_0\widehat v)(\Delta v+q_0v)dx.
$$
Then, as a solution to (\ref{(2.26)}), we take the function
$u=e^{2\tau\varphi_1}(\Delta\widehat v+q_0\widehat v).$
\end{proof}

\section{\bf Complex geometrical optics solutions with degenerate
weights}

In this section we construct the complex geometrical optics solutions  which
will play the critical role in the proof of Theorem 1.1.

We first observe that we can put the sets $\Gamma_{-}$ and 
$\partial\Omega \setminus \Gamma_{-,\epsilon}$ in a more convenient 
position on the boundary of the unit ball
and  slightly deform the ball itself.

Namely we set

\begin{equation}\label{domain1}
\Omega\subset  B(0,1), \quad \Gamma_-\subset S^1, \quad\mathcal
S \equiv\partial\Omega\setminus\Gamma_{-,\epsilon}\subset S^1.
\end{equation}
Let $\ell_+\in \Gamma_+$  be a piece of $\partial\Omega$ between the
points $\hat x_+$ and $\hat x_{+,\epsilon}$ and  $\ell_-\in
\Gamma_+$ be a piece of $\partial\Omega$ between the points $\hat x_-$
and $\hat x_{-,\epsilon}.$ Then
\begin{equation}\label{domain2}
\ell_\pm
 \subset B(0,1).
\end{equation}
We construct complex geometrical optics solutions of the Schr\"odinger 
equation
$\Delta+q_1$, with $q_1$ satisfying the conditions of Theorem
\ref{main}. Consider the equation
\begin{equation}\label{2.1I}
L_1u=\Delta u+q_1u=0 \quad \text{in}\,\, \Omega.
\end{equation}
Let $\Phi(z)$ be a holomorphic function satisfying (\ref{zzz}) and
(\ref{mika}). Let us fix small positive constants $\epsilon,$
$\epsilon'$ and consider two domains:
\begin{equation}\label{(2.28)}
\partial\Omega_{-,-\epsilon}=\{x\in\partial\Omega\vert
(\nabla\varphi_1,\nu)<-\epsilon\},\quad
\partial\Omega_{+,\epsilon'}=\{x\in\partial\Omega\vert
(\nabla\varphi_1,\nu)>\epsilon'\}.
\end{equation}
Suppose that
\begin{equation}\label{mmm}
\overline \Gamma_-\subset \partial\Omega_{-,-\epsilon},
\end{equation}
and

\begin{equation}\label{mmm1}
\overline {\mathcal S}\subset \partial\Omega_{+,\epsilon'}.
\end{equation}

We will construct solutions to (\ref{2.1I}) of the form
\begin{equation}\label{mozila}
u_1(x)=e^{\tau\Phi(z)}a(z)-\chi_1(x)
e^{\tau\Phi(\frac{1}{\overline z})}
a\left(\frac{1}{\overline z}\right)
+ e^{\tau\Phi}u_{11}+e^{\tau\varphi_1}u_{12},
\quad u_1\vert_{\Gamma_-}=0.
\end{equation}

We explain in the next subsections the different phase function
$\varphi_1$ and the amplitude $a(z)$ in (3.7).
 Moreover we derive the behavior for large $\tau$ of the different
pieces of the complex geometrical optics solutions.

\subsection{The amplitude $a(z)$ and the function $\chi_1$}

The amplitude
$a(z)$  has the following properties:
$$
a\in C^2(\overline\Omega),\quad \frac{\partial a}{\partial \overline z}
\equiv 0, \quad a(z)\ne 0\,\,\mbox{on}\,\,\overline \Omega.
$$
Next we construct the cut-off function $\chi_1(x)$.

By (\ref{domain1}) and (\ref{domain2}), there exists a neighborhood
$\mathcal O_1$ of the set $\Gamma_-$ such that $\widetilde
\varphi_1(x)=\mbox{ Re}\, \Phi(\frac{1}{\overline z})$ is a harmonic
function satisfying
\begin{equation}\label{(2.29)}
\widetilde \varphi_1(x)<\varphi(x),\quad \,\,
\forall x\in \Omega\cap \mathcal O_1,
\end{equation}
\begin{equation}\label{(2.30)} \partial \Omega\cap \mathcal O_1\subset
\partial\Omega_{-,-\frac{\epsilon}{2}},
\end{equation}
\begin{equation}\label{kisa11}
\mbox{supp}\,\nabla\chi_1\subset\subset B(0,1)\cap {\mathcal O}_1.
\end{equation}
Consider the following integral
$$
J(\tau)=\int_{\Omega}  \chi_1 r(x) e^{\tau
\Phi(\frac{1}{\overline z})-\tau\overline{\Phi(z)}}dx.
$$
We have
\begin{proposition}\label{miura} Let $r\in C^{1+\ell
}(\overline \Omega)$ for some positive $\ell.$ Then
$$
J(\tau)=o\left(\frac 1\tau\right).
$$
\end{proposition}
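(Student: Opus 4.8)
The plan is to estimate $J(\tau)$ by a stationary-phase / oscillatory-integral analysis around the critical points of the exponent. Write the phase as $\Psi(z) = \Phi(1/\overline z) - \overline{\Phi(z)}$. Since $\Phi$ is holomorphic and $\overline{\Phi(z)}$ is antiholomorphic, $\re\Psi(x) = \widetilde\varphi_1(x) - \varphi_1(x)$, which by \eqref{(2.29)} is strictly negative on $\Omega \cap \mathcal{O}_1$. Because $\supp \chi_1 \subset\subset B(0,1)\cap\mathcal{O}_1$ by \eqref{kisa11} and $\chi_1 r$ is supported there, the integrand is exponentially small: $|e^{\tau\Psi}| = e^{\tau\re\Psi} \le e^{-c\tau}$ on $\supp\chi_1$ for some $c>0$, provided $\re\Psi$ stays bounded away from $0$ on the (compact) support of $\chi_1 r$. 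First I would check this: $\supp(\chi_1 r)$ is a compact subset of $\Omega\cap\mathcal{O}_1$, and $\widetilde\varphi_1 - \varphi_1$ is continuous and strictly negative there by \eqref{(2.29)}, so it attains a negative maximum $-c < 0$. Hence $|J(\tau)| \le \|\chi_1 r\|_{L^1(\Omega)} e^{-c\tau} = o(1/\tau)$ (indeed $o(\tau^{-N})$ for every $N$), and we are done.

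The subtlety — and the step I expect to be the real obstacle — is the word "neighborhood" in the sentence preceding \eqref{(2.29)}: the cutoff $\chi_1$ must equal $1$ near $\Gamma_-$ (so that the reflected term $\chi_1 e^{\tau\Phi(1/\overline z)}a(1/\overline z)$ genuinely cancels the boundary trace on $\Gamma_-$), while at the same time $\supp\chi_1 \subset\subset B(0,1)\cap\mathcal{O}_1$, and on that support we need $\widetilde\varphi_1 < \varphi_1$ strictly. The points $\widehat x_\pm \in \partial\Gamma_-$ lie on the boundary, and at the boundary $S^1$ the map $z\mapsto 1/\overline z$ is the identity, so $\widetilde\varphi_1 = \varphi_1$ there; the strict inequality \eqref{(2.29)} degenerates as one approaches $\partial\Omega\cap\mathcal{O}_1$. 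The resolution is that $\chi_1 r$ is supported in the \emph{interior} — more precisely, the relevant integral is over $\Omega$, $r$ need not vanish at the boundary, but one has localized so that on $\supp(\chi_1 r) \cap \partial\Omega$ we only meet points of $\partial\Omega_{-,-\epsilon/2}$ via \eqref{(2.30)}, and there $(\nabla\varphi_1,\nu) < -\epsilon/2 < 0$ forces, by the construction of $\mathcal{O}_1$, that $\widetilde\varphi_1 - \varphi_1$ is not merely negative in the open set but has a negative upper bound up to the boundary portion. Concretely, the sign condition \eqref{(2.30)} together with the definition $\widetilde\varphi_1 = \re\Phi(1/\overline z)$ and the fact that on $S^1$ the normal derivatives of $\varphi_1$ and $\widetilde\varphi_1$ have opposite signs (since $1/\overline z$ inverts across $S^1$) gives $\widetilde\varphi_1 < \varphi_1$ in a full one-sided neighborhood of $\partial\Omega\cap\mathcal{O}_1$ inside $\Omega$. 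So the maximum of $\widetilde\varphi_1 - \varphi_1$ over the compact set $\supp(\chi_1 r)$ is strictly negative.

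Alternatively, and perhaps more cleanly, one can avoid worrying about boundary behavior by splitting $J(\tau)$ into $\int$ over $\{|z| < 1-\delta\}$ and over $\{1-\delta \le |z| \le 1\}$. On the inner region the exponent has real part $\le -c_\delta < 0$ uniformly, giving exponential decay; on the thin annular region of width $\delta$ one has $|J_{\text{annulus}}| \le C\delta \cdot \|r\|_{C^0} \cdot \sup e^{\tau\re\Psi}$, and since $\re\Psi \le 0$ throughout (this only needs the non-strict inequality, which extends to the closure), this contribution is $O(\delta)$; letting $\delta = \delta(\tau) \to 0$ slowly, e.g. $\delta = 1/\log\tau$, while keeping the inner estimate $e^{-c_{\delta(\tau)}\tau}$ dominant, yields $J(\tau) = o(1/\tau)$. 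The Hölder regularity $r \in C^{1+\ell}$ is more than enough for either argument — indeed it would be needed only if one wanted the sharper rate obtainable via integration by parts in the oscillatory directions of $\Psi$, exploiting that $\im\Psi$ is nonconstant away from $\mathcal{H}$; but for the stated $o(1/\tau)$ the exponential decay of $|e^{\tau\Psi}|$ off the boundary already suffices, so I would present the two-region splitting as the cleanest route.
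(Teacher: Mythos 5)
Your argument has a genuine gap, and it sits exactly where you flag the ``subtlety.'' Condition (\ref{kisa11}) constrains $\supp\nabla\chi_1$, not $\supp\chi_1$: the cutoff must equal $1$ on a full neighborhood of $\Gamma_-$ in $\overline\Omega$ (otherwise the reflected term does not cancel the trace of $e^{\tau\Phi}a$ on $\Gamma_-$), so $\supp(\chi_1 r)$ reaches $\partial\Omega$, where $1/\overline z=z$ and hence $\mbox{Re}\,\Psi=\widetilde\varphi_1-\varphi_1=0$. The strict inequality (\ref{(2.29)}) holds only in the open set $\Omega\cap\mathcal O_1$ and degenerates (to first order, since $\partial_\nu(\widetilde\varphi_1-\varphi_1)=-2\partial_\nu\varphi_1>0$ on $\Gamma_-$) at the boundary, so there is no $c>0$ with $\mbox{Re}\,\Psi\le-c$ on the support: your bound $|J(\tau)|\le Ce^{-c\tau}$ is unavailable. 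The fallback splitting does not repair this. The annulus of width $\delta$ contributes $O(\delta)$ and the interior contributes $e^{-c\delta\tau}$, and no choice of $\delta(\tau)$ makes $O(\delta)+e^{-c\delta\tau}=o(1/\tau)$; with your $\delta=1/\log\tau$ the annulus alone gives $O(1/\log\tau)\gg 1/\tau$, and optimizing over $\delta$ yields only $O(\log\tau/\tau)$. Even the sharpest modulus-only estimate, integrating $e^{-c\tau t}$ in the distance $t$ to the boundary, gives $O(1/\tau)$ --- one notch short of the required $o(1/\tau)$. Any argument that discards the oscillation of $e^{\tau\Psi}$ cannot reach the stated rate.

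The missing idea is to use that oscillation via integration by parts. The paper chooses $\supp\chi_1$ close enough to $\Gamma_-$ that $\partial_{\overline z}\bigl(\Phi(1/\overline z)-\overline{\Phi(z)}\bigr)\ne0$ on $\supp\chi_1$: vanishing of this derivative is equivalent to $\mbox{Re}(\Phi'(z)z)=0$, which by the Cauchy--Riemann equations would force $\partial\varphi_1/\partial\nu=0$, contradicting $\overline\Gamma_-\subset\partial\Omega_{-,-\epsilon}$. Writing $e^{\tau\Psi}=\tau^{-1}(\partial_{\overline z}\Psi)^{-1}\partial_{\overline z}e^{\tau\Psi}$ and integrating by parts once produces the factor $1/\tau$ and splits $J$ into an interior term involving $\partial_{\overline z}(\chi_1 r)$ --- this is where $r\in C^{1+\ell}$ enters --- which is $o(1)$ by dominated convergence since $\mbox{Re}\,\Psi<0$ a.e.\ in $\Omega$, and a boundary term which is a purely oscillatory integral with phase $2\tau\,\mbox{Im}\,\Phi$; its tangential derivative equals $\partial_\nu\mbox{Re}\,\Phi\ne0$ on $\supp\chi_1\cap\partial\Omega$, so non-stationary phase makes it $o(1/\tau)$ as well. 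You would need to supply this step to complete the proof.
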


\begin {proof} Observe that the function $\chi_1$ can be chosen in
such a way that
\begin{equation}\label{supp}
\partial_{\overline z}\left(\Phi\left(\frac {1}{\overline z}\right)
- \overline{\Phi(z)}\right)\vert_{\mbox{supp} \chi_1}\ne 0.
\end{equation}

Assume that for some point from $\partial\Omega_{-,-\epsilon}$ we have
$$
\partial_{\overline z}\left(\Phi\left(\frac {1}{\overline z}\right)
-\overline{\Phi( z)}\right)\vert_{\mbox{supp} \chi_1}=0,
$$
and the above equality is equivalent to
$$
\mbox{Re} (\Phi'(z)z)=0.
$$ 
This equality and the Cauchy-Riemann equations imply that 
$\frac{\partial \varphi}{\partial\nu}=0$ at this point, which is a 
contradiction.
Since it suffices to choose $\supp \chi_1$ close to $\Gamma_-$,
the proof of (\ref{supp}) is completed.

Therefore
$$
J(\tau)=\int_{\Omega}  \chi_1 r(x) e^{\tau
\Phi(\frac{1}{\overline z})-\tau\overline{\Phi(z)}}dx
= \frac 1\tau\int_{\Omega}  \chi_1 r(x) \frac{1}{\partial_{\overline z}
(\Phi(\frac {1}{\overline z})-\overline{\Phi(z)})}
\partial_{\overline z}e^{\tau
\Phi(\frac{1}{\overline z})-\tau\overline{\Phi(z)}}dx.
$$
Integrating by parts we have:
$$
J(\tau)=-\frac 1\tau\int_{\Omega}  \partial_{\overline z}(\chi_1 r(x))
\frac{1}{\partial_{\overline z} (\Phi(\frac {1}{\overline z})
-\overline{\Phi(z)})}
e^{\tau
\Phi(\frac{1}{\overline z})-\tau\overline{\Phi(z)}}dx
$$
$$+\frac {1}{2\tau}\int_{\partial\Omega}  \chi_1 r(x)
\frac{1}{\partial_{\overline z}(\Phi(\frac {1}{\overline z})
-\overline{\Phi(z)})}(\nu_1+i\nu_2)e^{\tau
\Phi(\frac{1}{\overline z})-\tau\overline{\Phi(z)}}d\sigma=J_1+J_2.
$$
Observe that on $\partial\Omega$
$$
e^{\tau \Phi(\frac{1}{\overline z})-\tau\overline{\Phi(z)}}
= e^{2\tau i\mbox{Im}\, \Phi(z)}.
$$
Using a stationary phase and taking into account that $\partial_\nu 
\mbox{Re}\Phi
= \partial_\tau\mbox{Im}\Phi\ne 0$
on $\mbox{supp} \chi_1\cap \partial\Omega$, we obtain
$$
J_2=o\left(\frac 1\tau\right).
$$
Next we observe that since $r\in C^{1+\ell}(\overline \Omega)$
we have
$$
J_1 =o\left(\frac 1\tau\right).
$$
The proof of the proposition is finished.
\end{proof}

\subsection{Construction of $u_{11}$}

The function $e^{\tau\Phi(z)}a(z)-\chi_1(x)e^{\tau\Phi(\frac{1}
{\overline z})}a(\frac{1}{\overline z})$ does not satisfy equation 
(\ref{2.1I}).
We construct $u_{11}$ in the next term in the asymptotic expansion. 
Before we start the construction of this term we need
several propositions.

Let us introduce the operators:

\begin{eqnarray}
\partial_{\overline z}^{-1}g=\frac{1}{2\pi i}\int_\Omega
\frac{g(\zeta, \overline\zeta)}{\zeta-z}
d\zeta\wedge d\overline\zeta=-\frac 1\pi\int_\Omega
\frac{g(\zeta,\overline\zeta)}{\zeta-z}d\xi_1d\xi_2,\nonumber\\\quad \partial_{
z}^{-1}g=-\frac{1}{2\pi i}\overline{\int_\Omega \frac{\overline
g(\zeta,\overline\zeta)}{\zeta-z} d\zeta\wedge d\overline\zeta}=-\frac 1\pi\int_\Omega
\frac{g(\zeta,\overline\zeta)}{\overline\zeta-\overline z}d\xi_1d\xi_2.
\end{eqnarray}

Then we know (e.g., \cite{VE} p. 56):
\begin{proposition}\label{Proposition 3.0}
Let $m\ge 0$ be an integer number and $\alpha\in (0,1).$ The operators
$\partial_{\overline z}^{-1},\partial_{ z}^{-1}\in \mathcal
L(C^{m+\alpha}(\overline \Omega),C^{m+\alpha+1}(\overline \Omega)).$
\end{proposition}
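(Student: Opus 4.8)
Proof proposal for Proposition~\ref{Proposition 3.0} (the Cauchy transform and its conjugate are bounded $C^{m+\alpha}(\overline\Omega)\to C^{m+\alpha+1}(\overline\Omega)$).

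The plan is to treat $\partial_{\overline z}^{-1}$ only; the statement for $\partial_z^{-1}$ then follows by complex conjugation, since $\partial_z^{-1}g = \overline{\partial_{\overline z}^{-1}\overline g}$ and conjugation is an isometry on the H\"older spaces. I would reduce to the case $\Omega = B(0,R)$ for $R$ large by extending $g\in C^{m+\alpha}(\overline\Omega)$ to a compactly supported function on $\R^2$ of the same H\"older class (Whitney/Seeley extension), so that $\partial_{\overline z}^{-1}g$ is the convolution $-\tfrac1\pi\, (1/z)\ast g$, a bona fide singular-integral-type operator with a locally integrable kernel. The first step is the base case $m=0$: one shows $\partial_{\overline z}^{-1}$ maps $C^{\alpha}$ into $C^{1}$ and moreover into $C^{1+\alpha}$. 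Boundedness into $L^\infty$ (indeed $C^0$) is immediate from local integrability of $1/|z|$. For the derivative one uses the classical identity $\partial_{\overline z}(\partial_{\overline z}^{-1}g)=g$ and the companion formula $\partial_z(\partial_{\overline z}^{-1}g) = \text{(principal value)}\ -\tfrac1\pi\,\mathrm{p.v.}\!\int \frac{g(\zeta)}{(\zeta-z)^2}\,d\xi_1 d\xi_2 + (\text{const})\,g(z)$, which is a Calder\'on--Zygmund operator; the standard estimate (subtracting $g(z)$ to exploit the H\"older modulus and splitting the integral over $|\zeta-z|<2|z-z'|$ and its complement) gives the $C^\alpha$ bound on $\partial_z(\partial_{\overline z}^{-1}g)$. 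Hence $\partial_{\overline z}^{-1}g\in C^{1+\alpha}$ with the quantitative bound.

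The second step is the induction on $m$. Assuming the result for $m-1$, for $g\in C^{m+\alpha}$ one differentiates under the integral: every first-order derivative $D$ (in $x_1$ or $x_2$, equivalently in $z,\overline z$) commutes with convolution, so $D(\partial_{\overline z}^{-1}g) = \partial_{\overline z}^{-1}(Dg)$ when $g$ is compactly supported. Since $Dg\in C^{m-1+\alpha}$, the inductive hypothesis gives $\partial_{\overline z}^{-1}(Dg)\in C^{m+\alpha}$, whence $\partial_{\overline z}^{-1}g\in C^{m+1+\alpha}$ with norm controlled by $\|g\|_{C^{m+\alpha}}$. Together with the $m=0$ case this closes the induction.

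The third (bookkeeping) step is to return from $B(0,R)$ to $\Omega$: restriction $C^{m+1+\alpha}(\overline{B(0,R)})\to C^{m+1+\alpha}(\overline\Omega)$ is bounded, and the extension operator in the first reduction is bounded $C^{m+\alpha}(\overline\Omega)\to C^{m+\alpha}_c(\R^2)$ with norm depending only on $\Omega$ and $m$; composing the three bounded maps yields the claim. The main obstacle is genuinely the $m=0$ derivative estimate, i.e.\ controlling the $C^\alpha$ norm of the Calder\'on--Zygmund piece $\mathrm{p.v.}\!\int g(\zeta)(\zeta-z)^{-2}$; everything else (the $L^\infty$ bound, the commutation with derivatives, the extension/restriction) is soft. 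Since this is the well-known Vekua estimate, I would in practice simply cite \cite{VE}, as the statement already does, and only reproduce the convolution-plus-induction reduction if a self-contained argument is wanted.
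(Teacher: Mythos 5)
The paper offers no proof of this proposition: it is stated as a known fact with the citation to \cite{VE}, p.~56, which is exactly where your final sentence lands, so your fallback position coincides with the paper's actual ``proof.'' Your sketch of a self-contained argument is the standard one and correctly identifies the genuinely hard step (the $C^\alpha$ bound for the principal-value Beurling-type integral $\mathrm{p.v.}\int g(\zeta)(\zeta-z)^{-2}$), and the conjugation identity $\partial_z^{-1}g=\overline{\partial_{\overline z}^{-1}\overline g}$ does check out against the paper's definitions. One wrinkle you should not dismiss as ``soft,'' though: $\partial_{\overline z}^{-1}$ as defined here integrates over $\Omega$ only, so it is \emph{not} the convolution of $-1/(\pi z)$ with a compactly supported extension $\tilde g$ of $g$. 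Replacing $g$ by $\tilde g$ changes the operator by $-\frac 1\pi\int_{B(0,R)\setminus\Omega}\tilde g(\zeta)(\zeta-z)^{-1}d\xi_1d\xi_2$, a function holomorphic in $\Omega$ whose $C^{m+1+\alpha}$ regularity up to $\partial\Omega$ is itself a layer-potential estimate of the same nature as the one being proved; likewise, first-order derivatives commute with the full-plane convolution but commute with the domain-restricted operator only modulo a Cauchy-type boundary integral over $\partial\Omega$, whose H\"older continuity is the Plemelj--Privalov theorem rather than bookkeeping. All of this is classical and is precisely what Vekua carries out, so citing \cite{VE} remains the right call; just be aware that the extension-and-restriction reduction does not by itself dispose of the boundary behaviour.
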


Here and henceforth $\mathcal L(X,Y)$ denotes the Banach
space of all bounded linear operators from a Banach space $X$ to
another Banach space $Y$.

We define two other operators:
\begin{equation}\label{(3.1)}
R_{\Phi}g=e^{\tau(\overline {\Phi(z)}-{\Phi(z)})}\partial_{\overline
z}^{-1}(ge^{\tau( {\Phi(z)}-\overline {\Phi(z)})}),\quad \widetilde
R_{\Phi}g=e^{\tau(\overline {\Phi(z)}-{\Phi(z)})}\partial_{
z}^{-1}(ge^{\tau( {\Phi(z)}-\overline {\Phi(z)})}).
\end{equation}

\begin{proposition}\label{Proposition 3.1} Let $g\in
C^\epsilon(\overline\Omega)$ for some positive $\epsilon.$ The function
$R_{\Phi}g$ is a solution to
\begin{equation}\label{(3.2)}
\partial_{\overline z}R_{\Phi}g-\tau\overline{\frac{\partial\Phi(z)}
{\partial z}}R_{\Phi}g=g\quad\mbox{in}\,\,\Omega.
\end{equation}
The function $\widetilde R_{\Phi}g$ solves
\begin{equation}\label{(3.3)}
\partial_{ z}\widetilde R_{\Phi}g+\tau \frac{\partial\Phi(z)}
{\partial z}\widetilde R_{\Phi}g=g\quad\mbox{in}\,\,\Omega.
\end{equation}
\end{proposition}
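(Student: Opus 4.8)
The plan is to verify each identity by direct computation, exploiting the fact that conjugation by the exponential weight $e^{\tau(\overline{\Phi(z)}-\Phi(z))}$ converts the first-order operators $\partial_{\overline z}-\tau\overline{\Phi'}$ and $\partial_z+\tau\Phi'$ into the bare operators $\partial_{\overline z}$ and $\partial_z$, for which the Cauchy transforms $\partial_{\overline z}^{-1}$ and $\partial_z^{-1}$ are right inverses. Concretely, first I would record the conjugation identities
$$
e^{\tau(\Phi-\overline\Phi)}\,\partial_{\overline z}\,\bigl(e^{\tau(\overline\Phi-\Phi)}\,\cdot\,\bigr)=\partial_{\overline z}-\tau\,\partial_{\overline z}\!\bigl(\Phi-\overline\Phi\bigr)=\partial_{\overline z}+\tau\,\partial_{\overline z}\overline\Phi=\partial_{\overline z}-\tau\,\overline{\frac{\partial\Phi}{\partial z}},
$$
using (\ref{zzz}), i.e. $\partial_{\overline z}\Phi=0$, and the conjugate relation $\partial_{\overline z}\overline\Phi=\overline{\partial_z\Phi}$. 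An analogous computation gives
$$
e^{\tau(\Phi-\overline\Phi)}\,\partial_{z}\,\bigl(e^{\tau(\overline\Phi-\Phi)}\,\cdot\,\bigr)=\partial_{z}-\tau\,\partial_{z}\!\bigl(\Phi-\overline\Phi\bigr)=\partial_{z}-\tau\,\frac{\partial\Phi}{\partial z},
$$
where again the $\overline\Phi$ contribution drops because $\partial_z\overline\Phi=\overline{\partial_{\overline z}\Phi}=0$. Note the sign: from $R_\Phi$ we pick up $-\tau\overline{\Phi'}$ in the $\partial_{\overline z}$-equation but from $\widetilde R_\Phi$ we need $+\tau\Phi'$ in the $\partial_z$-equation; this is consistent because $\widetilde R_\Phi$ is defined with the \emph{same} weight $e^{\tau(\overline\Phi-\Phi)}$ inside, so one must be slightly careful and check that the definition (\ref{(3.1)}) indeed produces the $+\tau\Phi'$ equation rather than $-\tau\Phi'$ — I would double-check by rewriting $\widetilde R_\Phi g=e^{\tau(\overline\Phi-\Phi)}\partial_z^{-1}(ge^{\tau(\Phi-\overline\Phi)})$ and applying the second conjugation identity with the roles of the two exponentials interchanged, which flips the sign of the $\tau$-term.

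The substantive analytic input is that $g\in C^{\epsilon}(\overline\Omega)$ guarantees $ge^{\tau(\Phi-\overline\Phi)}\in C^{\epsilon}(\overline\Omega)$ (the exponential factor is smooth on $\overline\Omega$ since $\Phi\in C^2(\overline\Omega_0)$), so Proposition~\ref{Proposition 3.0} applies and $\partial_{\overline z}^{-1}(ge^{\tau(\Phi-\overline\Phi)})\in C^{\epsilon+1}(\overline\Omega)$ is genuinely differentiable, with $\partial_{\overline z}\partial_{\overline z}^{-1}h=h$ holding pointwise for Hölder $h$ (the classical Cauchy–Pompeiu / Vekua theory, \cite{VE}). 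Then I would apply $\partial_{\overline z}-\tau\overline{\Phi'}$ to $R_\Phi g$, use the product rule, invoke the first conjugation identity to collapse the weighted derivative to $\partial_{\overline z}$ acting on $\partial_{\overline z}^{-1}(ge^{\tau(\Phi-\overline\Phi)})$, which equals $ge^{\tau(\Phi-\overline\Phi)}$, and finally multiply back by $e^{\tau(\overline\Phi-\Phi)}$ to recover exactly $g$; this yields (\ref{(3.2)}). The identity (\ref{(3.3)}) for $\widetilde R_\Phi g$ follows in the same way from $\partial_z\partial_z^{-1}h=h$ and the second conjugation identity.

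The only point requiring care — and the one I expect to be the main (minor) obstacle — is bookkeeping the signs and the barred/unbarred derivatives correctly, together with confirming that $\partial_{\overline z}^{-1}$ and $\partial_z^{-1}$ are right inverses in the pointwise sense on $C^{\epsilon}(\overline\Omega)$ (as opposed to merely in a distributional sense). The latter is exactly what Proposition~\ref{Proposition 3.0} and the cited Vekua reference supply, so no new estimate is needed; the computation is otherwise a two-line application of the product rule. I would write it out for $R_\Phi g$ in full and then remark that the argument for $\widetilde R_\Phi g$ is identical \emph{mutatis mutandis}, with $\partial_{\overline z}$ replaced by $\partial_z$ and the sign of the zeroth-order term flipped accordingly.
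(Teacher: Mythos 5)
Your argument is correct and is essentially the paper's own proof: a direct product-rule computation in which differentiating the exponential weight produces the zeroth-order term and $\partial_{\overline z}\partial_{\overline z}^{-1}h=h$, $\partial_{z}\partial_{z}^{-1}h=h$ (valid pointwise for H\"older $h$ by Proposition \ref{Proposition 3.0}) produces $g$; the paper simply writes this out for $\widetilde R_\Phi g$ without isolating the conjugation identities. The one slip is the final equality of your first display: with the exponentials in the order you wrote them, $e^{\tau(\Phi-\overline\Phi)}\partial_{\overline z}\bigl(e^{\tau(\overline\Phi-\Phi)}\,\cdot\,\bigr)=\partial_{\overline z}+\tau\,\overline{\partial_z\Phi}$ rather than $\partial_{\overline z}-\tau\,\overline{\partial_z\Phi}$ (the minus-sign version holds with the two exponentials interchanged, which is in fact the form you need in order to hit $R_\Phi g$ from the left) --- but your own remark about interchanging the exponentials and tracking signs already covers exactly this, so the proof goes through as described.
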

\begin{proof}
The proof is by direct computations:
\begin{eqnarray}
\partial_{ z}\widetilde R_{\Phi}g+\tau \frac{\partial\Phi(z)}{\partial z}
\widetilde R_{\Phi}g=
\partial_z(e^{\tau(\overline {\Phi(z)}-{\Phi(z)})}
\partial_{ z}^{-1}(ge^{\tau( {\Phi(z)}-\overline
{\Phi(z)})}))\nonumber\\+\tau \frac{\partial\Phi(z)}{\partial z}
(e^{\tau(\overline {\Phi(z)}-{\Phi(z)})}
\partial_{\overline z}^{-1}(ge^{\tau( {\Phi(z)}-\overline
{\Phi(z)})}))=\nonumber\\-\tau\frac{\partial\Phi(z)}{\partial z}
(e^{\tau(\overline {\Phi(z)}-{\Phi(z)})}
\partial_{ z}^{-1}(ge^{\tau( {\Phi(z)}-\overline
{\Phi(z)})}))+(e^{\tau(\overline {\Phi(z)}-{\Phi(z)})} (ge^{\tau(
{\Phi(z)}-\overline {\Phi(z)})}))\nonumber\\+\tau
\frac{\partial\Phi(z)}{\partial z} (e^{\tau(\overline
{\Phi(z)}-{\Phi(z)})}
\partial_{\overline z}^{-1}(ge^{\tau( {\Phi(z)}-\overline
{\Phi(z)})}))=g.\nonumber
\end{eqnarray}
\end{proof}

Denote
$$
\mathcal O_\epsilon=\{ x\in \Omega\vert
dist(x,\partial\Omega)\le \epsilon\}.
$$

\begin{proposition}\label{Proposition 3.2}
Let $g\in C^1(\Omega), g\vert_{\mathcal O_\epsilon}\equiv 0, g(x)\ne 0 $
for
all $x\in\mathcal H.$ Then
\begin{equation}\label{(3.4)}\vert R_{\Phi}g(x)\vert
+ \vert\widetilde R_{\Phi} g(x)\vert\le C \max_{x\in\mathcal H}\vert g(x)
\vert/\tau
\end{equation} for all $x\in \mathcal O_{\epsilon/2}.$ If $g\in
C^2(\overline \Omega)$ and $g\vert_{\mathcal H}=0$ then
\begin{equation}\label{(3.5)}\vert R_{\Phi} g(x)\vert+\vert
\widetilde R _{\Phi}g(x)\vert\le C /\tau^2\end{equation}
for all $x\in \mathcal O_{\epsilon/2}.$
\end{proposition}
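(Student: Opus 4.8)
The plan is to estimate $R_\Phi g$ and $\widetilde R_\Phi g$ on the interior collar $\mathcal O_{\epsilon/2}$ by exploiting the fact that the oscillatory/exponential factor $e^{\tau(\Phi(z)-\overline{\Phi(z)})} = e^{2i\tau\varphi_2(z)}$ is purely oscillatory, and that $g$ is supported away from $\partial\Omega$ (in the first case) and vanishes on $\mathcal H$ (in the second). Concretely, writing
\begin{equation}\nonumber
R_\Phi g(x) = -\frac{1}{\pi}\, e^{2i\tau\varphi_2(z)}\int_\Omega \frac{g(\zeta,\overline\zeta)\, e^{-2i\tau\varphi_2(\zeta)}}{\zeta - z}\, d\xi_1 d\xi_2 ,
\end{equation}
one sees that for $x \in \mathcal O_{\epsilon/2}$ the kernel $\frac{1}{\zeta-z}$ is smooth on $\supp g$ in the first case (since $\supp g$ stays at distance $\geq \epsilon/2$ from $\mathcal O_{\epsilon/2}$), so the integrand is a $C^1$ function in $\zeta$. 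The first step is then a non-stationary-phase / integration-by-parts argument in the $\overline\zeta$ (or $\zeta$) variable: away from the critical set $\mathcal H$ of $\Phi$ one has $\partial_{\overline\zeta}\varphi_2 \neq 0$ (equivalently $\Phi'(\zeta)\neq 0$), so one writes $e^{-2i\tau\varphi_2(\zeta)} = \frac{1}{c\tau}\,\partial_{\overline\zeta}\big(e^{-2i\tau\varphi_2(\zeta)}\big)$ locally and integrates by parts, gaining a factor $\tau^{-1}$ each time, with the boundary terms vanishing because $g$ vanishes near $\partial\Omega$.

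The second step is to handle the neighborhoods of the finitely many critical points $p \in \mathcal H$, where the above integration by parts degenerates. Near such a $p$ one localizes with a cutoff and uses the nondegeneracy $\Phi''(p)\neq 0$: in suitable holomorphic coordinates $\Phi$ looks like a quadratic, so $\varphi_2$ has a nondegenerate critical point and the classical two-dimensional stationary phase lemma applies, giving a contribution of size $O(\tau^{-1})$ from $\int g\, e^{-2i\tau\varphi_2}$ near $p$ (the leading stationary-phase coefficient being $c\, g(p)\tau^{-1}$). This yields (\ref{(3.4)}): $|R_\Phi g(x)| + |\widetilde R_\Phi g(x)| \leq C\max_{\mathcal H}|g|/\tau$ for $x \in \mathcal O_{\epsilon/2}$, since the prefactor $|e^{2i\tau\varphi_2(z)}| = 1$ and the kernel $\frac{1}{\zeta-z}$ together with its derivatives are bounded on $\supp g$ uniformly for $x$ in the collar.

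For the sharper estimate (\ref{(3.5)}), under the hypotheses $g \in C^2(\overline\Omega)$ and $g|_{\mathcal H} = 0$, I would refine the stationary-phase analysis near each critical point: when the amplitude vanishes at the stationary point the leading $O(\tau^{-1})$ term of stationary phase is absent, so the contribution near $p$ is $O(\tau^{-2})$ (this uses that $g\in C^2$ so the next term in the asymptotic expansion is controlled, and that $g(p)=0$ kills the principal symbol). Away from $\mathcal H$ one simply integrates by parts twice instead of once, again legitimate because $g$ still vanishes near $\partial\Omega$ is \emph{not} assumed here — but note that in (\ref{(3.5)}) the hypothesis is only $g\in C^2(\overline\Omega)$ and $g|_{\mathcal H}=0$, so one must be careful: the boundary terms from integration by parts on $\partial\Omega$ are themselves oscillatory integrals over $\partial\Omega$ with non-stationary phase (because $\partial_\tau \mathrm{Im}\,\Phi = \partial_\nu\mathrm{Re}\,\Phi \neq 0$ on $\partial\Omega$, there being no critical points on the boundary by (\ref{mika})), hence are $O(\tau^{-\infty})$, and in particular $O(\tau^{-2})$. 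Combining the interior non-stationary-phase gain, the stationary-phase-with-vanishing-amplitude gain near $\mathcal H$, and the negligible boundary contributions gives (\ref{(3.5)}).

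The main obstacle I anticipate is bookkeeping the stationary phase near the critical points carefully enough to see the correct power of $\tau$: one needs the precise form of the two-dimensional stationary phase expansion with a $C^1$ (resp. $C^2$) amplitude and must verify that the quadratic model for $\varphi_2 = \mathrm{Im}\,\Phi$ at a nondegenerate critical point of $\Phi$ indeed has a nondegenerate real Hessian (it does: if $\Phi''(p)\neq 0$ then $\mathrm{Im}\,\Phi$ has Hessian with determinant $-|\Phi''(p)|^2 < 0$, a nondegenerate saddle), and that the extra factor $\frac{1}{\zeta-z}$ and the cutoffs do not spoil the amplitude's regularity or its vanishing at $p$. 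The uniformity in $x\in\mathcal O_{\epsilon/2}$ is then automatic since the phase $\varphi_2$ does not depend on $x$ and the $x$-dependence sits only in the smooth, uniformly bounded kernel and prefactor.
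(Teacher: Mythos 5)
Your proposal is correct and follows essentially the same route as the paper: the paper likewise reduces to the oscillatory integral $\int_\Omega e^{2i\tau\mathrm{Im}\,\Phi(\zeta)}\frac{g(\zeta)}{z-\zeta}\,d\xi_1 d\xi_2$ with unimodular prefactor, notes that for $z\in\mathcal O_{\epsilon/2}$ the amplitude $\frac{g(\zeta)}{z-\zeta}$ is smooth and compactly supported, and invokes the standard stationary phase lemma at the nondegenerate critical points of $\mathrm{Im}\,\Phi$ (which coincide with $\mathcal H$ by the Cauchy--Riemann equations), with the vanishing of $g$ on $\mathcal H$ killing the leading $O(\tau^{-1})$ term in the second case. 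The details you supply (non-stationary phase away from $\mathcal H$, the saddle structure of the Hessian of $\mathrm{Im}\,\Phi$) are exactly what the paper's appeal to ``the standard stationary phase argument'' implicitly contains.
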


\begin{proof}
Observe that $e^{\tau(
\Phi(z)-\overline{\Phi(z)})}=e^{2i\tau\mbox{Im}\Phi(z) }.$ By the
Cauchy-Riemann equations, the sets of the critical points of
$\Phi(z)$ and $\mbox{Im}\Phi(z)$ are exactly the same.
Therefore by our
assumptions the Hessian of $\mbox{Im}\Phi(z)$ is nondegenerate at each
point of $\mathcal H$ and it is enough to show that
$$
\left\vert\int_\Omega e^{2i\tau\mbox{Im}\Phi(z)} 
\frac{g(\zeta,\overline\zeta)}{z-\zeta} d\zeta\wedge
d\overline\zeta\right\vert
\le C\max_{x\in\mathcal H}\vert g(x)\vert/\tau\quad
\mbox{and}\,\,\left\vert\int_\Omega e^{2i\tau\mbox{Im}\Phi(z)}  
\frac{g(\zeta, \overline\zeta)}{z-\zeta}
d\zeta\wedge
d\overline\zeta\right\vert\le C/\tau^2.
$$
We observe that for any $z=x_1+ix_2\in \mathcal O_{\frac \epsilon
2}$ the function $ \frac{g(\zeta)}{z-\zeta}$ in the variable 
$\zeta$ is smooth and compactly supported.
The statement of the proposition follows from the standard stationary 
phase argument (see e.g., \cite{H}).
\end{proof}

Denote
\begin{equation}\label{polynomial}
r(z)=\Pi_{k=1}^\ell(z-z_k)\,\,\,\mbox{where} \,\,\mathcal 
H=\{z_1,\dots, z_\ell\}.
\end{equation}

\begin{proposition}\label{Proposition 3.3}
Let $g\in C^1(\overline\Omega), g\vert_{\mathcal
O_\epsilon}\equiv 0.$ Then for each $\delta\in (0,1)$ there exists a
constant $C(\delta)$ such that
\begin{equation}\label{(3.6)}
\Vert \widetilde R_{\Phi} (\overline {r(z)}g)\Vert_{L^2(\Omega)}
\le C(\delta)\Vert g\Vert_{C^1(\overline\Omega)}/\tau^{1-\delta},\quad
\Vert R_{\Phi} (r(z)g)\Vert_{L^2(\Omega)}\le C(\delta)\Vert
g\Vert_{C^1(\overline\Omega)}/\tau^{1-\delta}.
\end{equation}
\end{proposition}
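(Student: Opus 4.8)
The plan is to reduce the $L^2$ estimate to a pointwise/weak-type bound on the integral operators $\partial_{\overline z}^{-1}$ and $\partial_z^{-1}$ acting against the oscillatory factor $e^{\tau(\Phi(z)-\overline{\Phi(z)})}=e^{2i\tau\operatorname{Im}\Phi(z)}$, and then to exploit the extra vanishing provided by the factor $r(z)$ (resp.\ $\overline{r(z)}$) at the critical set $\mathcal H$. Recall from Proposition~\ref{Proposition 3.2} that away from $\mathcal H$ the operators $R_\Phi$ and $\widetilde R_\Phi$ already gain a full power of $\tau$ (with the constant controlled by $\max_{\mathcal H}\abs g$), the only obstruction to a uniform $O(1/\tau)$ bound being the stationary points of the phase $\operatorname{Im}\Phi$, which coincide with $\mathcal H$ by the Cauchy--Riemann equations.

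First I would split $\Omega$ into a union of small disjoint disks $B(z_k,\rho)$ centered at the critical points together with the remaining region $\Omega'=\Omega\setminus\bigcup_k B(z_k,\rho)$. On $\Omega'$ the phase $\operatorname{Im}\Phi$ is non-stationary, so repeated non-stationary-phase integration by parts (as in \cite{H}) gives $\abs{R_\Phi(r(z)g)(x)}+\abs{\widetilde R_\Phi(\overline{r(z)}g)(x)}\le C\Vert g\Vert_{C^1(\overline\Omega)}/\tau$ pointwise, hence an $L^2(\Omega')$ bound of order $1/\tau$, which is better than claimed. Near each $z_k$ the kernel $1/(z-\zeta)$ is locally integrable, and the factor $r(\zeta)$ (resp.\ $\overline{r(\zeta)}$) vanishes to first order at $\zeta=z_k$; writing $r(\zeta)=(\zeta-z_k)\,h_k(\zeta)$ with $h_k$ smooth, one has $r(\zeta)g(\zeta)=(\zeta-z_k)\,g_k(\zeta)$ with $g_k\in C^1$, so the amplitude is $O(\abs{\zeta-z_k})$ on $B(z_k,\rho)$. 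Combining this first-order vanishing with the non-degenerate stationary point of $\operatorname{Im}\Phi$ at $z_k$, the stationary-phase asymptotics give a contribution of size $O(\tau^{-1}\log\tau)$, or more crudely $O(\tau^{-1+\delta})$ for any $\delta>0$; taking $L^2$ over the fixed small disk $B(z_k,\rho)$ preserves this rate, and summing over the finitely many $k$ (finiteness by (\ref{mona})) yields the claimed bound $C(\delta)\Vert g\Vert_{C^1(\overline\Omega)}/\tau^{1-\delta}$.

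For the near-critical analysis I would make this precise by using the local normal form: since $\Phi''(z_k)\ne 0$, there is a holomorphic change of coordinates $w=w(z)$ near $z_k$ with $\Phi(z)-\Phi(z_k)=\tfrac12 w^2$, whence $\operatorname{Im}\Phi$ becomes, up to a linear term that can be absorbed, a non-degenerate quadratic form in $\operatorname{Re}w,\operatorname{Im}w$. After this substitution the relevant integral is $\int e^{i\tau Q(\eta)}\,\eta\,K(\eta)\,d\eta$ over a disk in $\eta\in\R^2$, with $Q$ a fixed non-degenerate quadratic form, $K\in L^2_{loc}$ coming from $1/(z-\zeta)$ and the Jacobian, and the explicit factor $\eta=O(\abs\eta)$ from $r$. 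Scaling $\eta\mapsto\tau^{-1/2}\eta$ converts the oscillatory integral into an absolutely convergent one times $\tau^{-1}$ up to the weak singularity of $K$; tracking the $\log$ or $\tau^\delta$ loss from that singularity produces the factor $\tau^{-(1-\delta)}$. The one genuine subtlety — and the place where I expect the argument to require care — is the uniformity of these estimates in the outer variable $x\in\overline\Omega$: when $x$ itself lies near $z_k$ the kernel singularity at $\zeta=z$ overlaps the stationary point, so one must estimate $\int_{B(z_k,\rho)} e^{i\tau Q}\,\abs{\zeta-z_k}\,\abs{z-\zeta}^{-1}\,d\zeta$ with constants independent of $z$; this follows by splitting into $\abs{\zeta-z}<\tau^{-1/2}$ (handled by the local $L^1$ bound on the kernel, using $\abs{\zeta-z_k}\le\abs{\zeta-z}+\abs{z-z_k}$) and $\abs{\zeta-z}\ge\tau^{-1/2}$ (handled by stationary phase), and then integrating the resulting $x$-dependent bound in $L^2(dx)$. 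Once this uniform pointwise control is in hand, the two estimates in (\ref{(3.6)}) are immediate, completing the proof.
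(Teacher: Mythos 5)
Your overall strategy --- a direct stationary/non-stationary phase analysis of the oscillatory integrals defining $R_\Phi$ and $\widetilde R_\Phi$, isolating the critical points of $\operatorname{Im}\Phi$ and exploiting the first-order vanishing of $r$ there --- is genuinely different from the paper's. The paper never performs a stationary phase computation for general interior $x$: it sets $v=\widetilde R_\Phi(\overline{r(z)}g)$, uses Proposition~\ref{Proposition 3.2} only in the boundary layer $\mathcal O_{\epsilon/2}$, and then runs a duality argument, introducing an auxiliary function $p$ solving the adjoint $\overline{\partial}$-equation with right-hand side $v$, pairing its equation against $\overline{r(z)}g/\partial_z\Phi$, and extracting the loss $\tau^{\delta}$ from $1/\partial_z\Phi\in L^{2-\epsilon}(\Omega)$ (simple zeros) combined with interpolation between $\Vert\widetilde p\Vert_{L^2}\le C\Vert v\Vert_{L^2}$ and $\Vert\widetilde p\Vert_{H^1}\le C\tau\Vert v\Vert_{L^2}$. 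Your route would, if completed, give a stronger (essentially pointwise, nearly $1/\tau$) conclusion, but as written it has a quantitative gap at precisely the step you yourself flag as the subtle one.

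Concretely: inside $B(z_k,\rho)$ you treat the region $\vert\zeta-z\vert<\tau^{-1/2}$ by the trivial $L^1$ bound, which gives $\int_{\vert\zeta-z\vert<\tau^{-1/2}}\vert\zeta-z_k\vert\,\vert\zeta-z\vert^{-1}d\xi_1d\xi_2\le C\bigl(\tau^{-1}+\vert z-z_k\vert\,\tau^{-1/2}\bigr)$. When $x$ is at distance comparable to the fixed radius $\rho$ from $z_k$ this is of size $\tau^{-1/2}$, and it stays of size $\tau^{-1/2}$ after taking the $L^2(dx)$ norm over the relevant set of $x$ (measure comparable to $\rho^2$, independent of $\tau$). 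Since Proposition~\ref{Proposition 3.3} claims $\tau^{-1+\delta}$ for every $\delta\in(0,1)$, this is insufficient for $\delta<1/2$. Moreover the trivial bound cannot be improved on that region, because $\vert\zeta-z_k\vert\ge\vert z-z_k\vert/2$ there; one must either shrink the inner region to radius of order $\tau^{-1}$ (and then control the resulting kernel of size $\tau$ in the outer, stationary-phase region), or exploit oscillation in the inner region using that the phase gradient is bounded below by $c\vert z-z_k\vert$, carefully accounting for the term $\pi\delta_{z}$ produced by $\partial_\zeta(\overline\zeta-\overline z)^{-1}$ upon integration by parts. Neither is carried out, so the proof does not close as written. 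A smaller point: on $\Omega'$ ``repeated'' integration by parts is not available, since the amplitude is only $C^1$ and the $\zeta$-derivative of the Cauchy kernel is a delta function; a single integration by parts is both all you get and all you need there.
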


\begin{proof}
Denote $v=\widetilde R_\Phi (\overline{r(z)}g).$ By Proposition
\ref{Proposition 3.2}
 \begin{equation}\label{(3.7)} \Vert
v\Vert_{L^2(\mathcal O_{\epsilon/2})}\le C/\tau.
\end{equation}
Then by Proposition \ref{Proposition 3.1}
$$
\frac{\partial v}{\partial z}+\tau \frac{\partial \Phi}{\partial
z}v=\overline{r(z)}g\quad \mbox{in}\,\,\Omega.
$$
There exists a function $p$ such that
$$
-\frac{\partial p}{\partial\overline z}+\tau
\frac{\overline{\partial\Phi(z)}}{\partial z}
p=v\quad\mbox{in}\,\,\Omega
$$
and there exists a constant $C>0$ independent of $\tau$ such that
\begin{equation}\label{(3.8)}
\Vert p\Vert_{L^2(\Omega)}\le C\Vert v\Vert_{L^2(\Omega)}.
\end{equation}
Let $\chi$ be a nonnegative  function such that $\chi\equiv 0$ on
$\mathcal
O_{\frac{\epsilon}{16}} $ and $\chi\equiv 1$ on  $\Omega\setminus
\mathcal O_{\frac \epsilon 8}.$ 
Setting $\widetilde p=\chi p$ and using $g\vert_{\mathcal O_\epsilon}
\equiv 0$, we have that
$$
\int_\Omega\overline{r( z)} g\overline p dx=\int_{\Omega\setminus
\mathcal
O_\epsilon}\overline{ r(z)} g\overline p dx=\int_\Omega\overline{r( z)}
g\overline{\widetilde p}dx
$$
and
\begin{equation}\label{(3.8')}
-\frac{\partial \widetilde p}{\partial\overline z}+\tau
\frac{\overline{\partial\Phi(z)}}{\partial z}\widetilde  p=\chi
v-p\frac{\partial\chi}{\partial \overline z}\quad\mbox{in}\,\,\Omega.
\end{equation}
 Then
\begin{equation}\label{(3.9)}\Vert \chi^\frac 12v\Vert^2_{L^2(\Omega)}
=\int_\Omega\overline{r(z)}
g\overline p dx+\int_\Omega p\frac{\partial\chi}{\partial \overline z}
\overline v dx.
\end{equation}

Note that
\begin{equation}\label{(3.10)}
\Vert \widetilde p\Vert_{H^1(\Omega)}\le C\tau\Vert p\Vert_{L^2(\Omega)}
\le
C\tau\Vert v\Vert_{L^2(\Omega)}, \quad
\int_\Omega\overline{r(z)} g\overline p dx=\int_\Omega
g\overline{r(z)\widetilde p}dx.
\end{equation}
Taking the scalar product of (\ref{(3.8')}) and $
\frac{\overline{r(z)}}{\partial_z\Phi(z)} g$ we obtain
$$
\int_\Omega\frac{\overline{r(z)}}{\partial_z\Phi(z)}
g\overline{\left(-\frac{\partial \widetilde p}{\partial\overline z}
+\tau
\frac{\overline{\partial\Phi(z)}}{\partial z}\widetilde
p\right)}dx
= \int_\Omega\frac{\overline{r( z)}}{\partial_z\Phi(z)}
g\overline{\left(\chi v-p\frac{\partial\chi}{\partial \overline z}
\right)}dx,
$$
$$
\tau\int_\Omega g\overline{r(z)\widetilde p}dx=\int_\Omega
\frac{\overline{r(z)}}{\partial_z\Phi(z)} g\overline{\left(\chi
v+p\frac{\partial\chi}{\partial \overline
z}\right)}dx
- \int_\Omega\frac{\partial}{\partial
z}\left(\frac{\overline{r(z)}}{\partial_z\Phi(z)} g\right)
\overline{\widetilde p}dx.
$$
By (\ref{(3.10)}) and the Sobolev embedding theorem, for each
$\epsilon\in (0,\frac 12)$ we have
\begin{eqnarray}
\left\vert\int_\Omega\frac{\partial}{\partial
z}\left(\frac{\overline{r(z)}}{\partial_z\Phi(z)} g\right)
\overline{\widetilde p} dx\right\vert \le
\left\vert\int_\Omega\frac{\overline{r(z)}\partial^2_z\Phi(z)}
{(\partial_z\Phi(z))^2}g\overline{\widetilde p}dx\right\vert
+ \left\vert\int_\Omega\frac{\overline{r(z)}}{\partial_z\Phi(z)}
\frac{\partial
g}{\partial z}\overline{\widetilde p}dx\right\vert \nonumber\\
\le C\Vert
g\Vert_{C^1(\overline\Omega)}
\left\Vert\frac{1}{\partial_z\Phi(z)}\right\Vert
_{L^{2-\epsilon}(\Omega)}\Vert\widetilde
p\Vert_{L^{\frac{2-\epsilon}{1-\epsilon}}(\Omega)}\le C\Vert \widetilde
p\Vert_{H^{\delta_3(\epsilon)}(\Omega)}\le C\tau^{\delta_4}\Vert
v\Vert_{L^2(\Omega)}.
\end{eqnarray}

Here we choose $\delta_3(\epsilon) > 0$ such that $\delta_3(\epsilon)
\rightarrow +0$ as $\epsilon\rightarrow +0$ and
$H^{\delta_3(\epsilon)}(\Omega)\subset
L^{\frac{2-\epsilon}{1-\epsilon}}(\Omega).$  Therefore
\begin{equation}\label{(3.11)}
\left\vert\int_\Omega gr(z)\overline{\widetilde p} dx\right\vert\le
C\tau^{-1+\delta_4}\Vert v\Vert_{L^2(\Omega)}\quad\mbox{as}\,\,
\delta_4\rightarrow +0.
\end{equation}
By (\ref{(3.7)})
\begin{equation}\label{(3.12)}
\left\vert\int_\Omega p\frac{\partial\chi}{\partial \overline z}
\overline vdx\right\vert
\le C\Vert p\Vert_{L^2(\Omega)}\Vert v\Vert_{L^2(\mathcal
O_\frac \epsilon 8)}\le C\Vert p\Vert_{L^2(\Omega)}/\tau.
\end{equation}
By (\ref{(3.8)}), (\ref{(3.11)}) and (\ref{(3.12)}) we obtain from
(\ref{(3.9)})
$$
\Vert v\Vert^2_{L^2(\Omega)}\le C(\tau^{-1+\delta_4}\Vert
v\Vert_{L^2(\Omega)}+\Vert p\Vert_{L^2(\Omega)}/\tau)\le
C\tau^{-1+\delta_4}\Vert v\Vert_{L^2(\Omega)}.
$$
In the last estimate we used (\ref{(3.8)}).
\end{proof}

We construct the function $u_{11}$ in the form
\begin{equation}\label{mamy1}
u_{11}=(u_{11,1}+u_{11,2}),
\end{equation}
where the functions $u_{11,k}$ are
defined in the following way: Let $e_i\in C^\infty(\overline
\Omega)$, $e_1+e_2\equiv 1,$  $e_2$ is zero in some neighborhood of
$\mathcal H$ and $e_1$ is zero in a neighborhood of
$\partial\Omega.$ The second term $u_{11}$ in the asymptotic
(3.7), is constructed  to satisfy
\begin{equation}\label{(3.15)}
\Delta u_{11}+4\tau \frac{{\partial\Phi(z)}}{\partial
z}\partial_{\overline z}u_{11}= aq_1+o\left(\frac 1\tau\right) \quad
\mbox{in}\,\,\Omega.
\end{equation}
Let $m_1(z), m_2(z), m_3(z)$ be polynomials satisfying
$$
(\partial^{-1}_{\overline z} (aq_1)-m_1(z))\vert_{\mathcal H}=0,
$$
$$
m_2(z)\vert_{\mathcal H}=0,\quad(\partial_z(\partial^{-1}_{\overline z}
(aq_1)-m_1(z))-m_2(z))\vert_{\mathcal H}=0,
$$
$$
m_3(z)\vert_{\mathcal H}=\partial_{z}m_3(z)\vert_{\mathcal
H}=0,\quad
\partial^2_z(\partial^{-1}_{\overline z} (aq_1)-m_1(z)-m_2(z)- m_3(z))
\vert_{\mathcal H}=0.
$$
The equation for $u_{11}$ can be transformed into
$$
4\partial_{z}u_{11}+4\tau \frac{{\partial\Phi(z)}}{\partial z}
u_{11}=\partial^{-1}_{\overline z} (aq_1)-\sum_{k=1}^3 m_k(z)
+ o\left(\frac 1\tau\right)\,\,\mbox{in}\,\,\Omega.
$$
Then
$$
4\partial_{z}u_{11,1}+4\tau \frac{{\partial\Phi(z)}}{\partial z}
u_{11,1}=e_1\left(\partial^{-1}_{\overline z} (aq_1)-\sum_{k=1}^3 m_k(z)
\right)\,\,\mbox{in}\,\,\Omega.
$$
and we define $u_{11,1}$ as
\begin{equation}\label{(3.16)}
u_{11,1}(x)=\frac 14 \widetilde R_\Phi\left(
e_1(\partial^{-1}_{\overline z}
(aq_1)-\sum_{k=1}^3 m_k(z))\right)
\end{equation}
and we define $u_{11,2}$ as
\begin{equation}\label{(3.17)}
u_{11,2}(x)=\frac 14
e_2(x)\left( \partial^{-1}_{\overline z} (aq_1)-\sum_{k=1}^3
m_k(z)\right)/(\tau
\partial_z\Phi(z)).
\end{equation}

Since by the assumption the function $e_2$ vanishes near the zeros of $\Phi$,
the function $u_{11,2}$ is smooth.

We will apply Proposition \ref{Proposition 3.3} to the function
$u_{11,1}$ to obtain the asymptotic behavior in $\tau.$  In order
to do that we need to represent the function
\begin{equation}\label{definition1}
\mathcal G_1=e_1\left(\partial^{-1}_{\overline z} (aq_1)
-\sum_{k=1}^3m_k(z)\right)
\end{equation}
in the form
$$
\mathcal G_1=\overline{r(z)}g(x),
$$
where $g$ is some function from
$C^1(\overline\Omega).$
This is an equivalent representation of the function
$m=\partial^{-1}_{\overline z} (aq_1)-\sum_{k=1}^3 m_k(z)$ in the form
$$
m=\overline{r(z)} g_1,\quad g_1\in C^1(\overline\Omega).
$$
We remind that the polynomial $r(z)$ is given by (\ref{polynomial}).
Denote as $p=\partial^{-1}_{\overline z} (aq_1).$ Let $x_j$ be a
critical point of the function
 $Im\Phi$ and $z_j \in \mathcal{H}$ (see (\ref{polynomial})).
By Taylor's formula
 $p(x)=p(z_j)+p_1(z-z_j)+p_2(\overline z-\overline z_j)+p_{11}(z-z_j)^2
+p_{12} (z-z_j)(\overline z-\overline z_j)
 +p_{22}(\overline z-\overline z_j)^2+q(z,\overline z).$
Then
$m=p_2(\overline z-\overline z_j)+p_{22}(\overline z-\overline z_j)^2
+p_{12} (z-z_j)(\overline z-\overline z_j)+q(z,\overline z)$ and we set
$g_1=(p_2(\overline z-\overline z_j)+p_{22}(\overline z-\overline z_j)^2
+p_{12} (z-z_j)(\overline z-\overline z_j)+q(z,\overline z))
/\overline{r(z)}.$
Let us show that $g_1\in C^1(\overline \Omega).$
Obviously $(p_2(\overline z-\overline z_j)+p_{22}(\overline z
-\overline z_j)^2+p_{12} (z-z_j)(\overline z-\overline z_j))/
\overline{r(z)}$ is a smooth function and $\widetilde q(z,\overline z)
=q(z,\overline z)/\overline{r(z)}$ is of $C^1$ outside of $z=0$.
Continue the function $\widetilde q$ by zero on $z=0.$ Since
$q=o(\vert z\vert^3)$ the partial derivatives of this function
vanishes at zero.

By Proposition \ref{Proposition 3.3}
\begin{equation} \label{(3.19)}\Vert
u_{11,1}\Vert_{L^2(\Omega)}\le C(\delta)/\tau^{1-\delta}\quad
\forall\delta\in(0,1).
\end{equation}

\subsection{Construction of $u_{12}$}

We will define $u_{12}$ as a  solution to the inhomogeneous problem
\begin{equation}\label{(3.18)}
\Delta (u_{12}e^{\tau\varphi_1}) +q_1u_{12}e^{\tau\varphi_1}=(q_1u_{11}+\Delta u_{11,2})e^{\tau \Phi} -
L_1\left(\chi_1e^{\tau\Phi(\frac{1}{\overline z})}
a\left(\frac{1}{\overline z}\right)\right)\quad \mbox{in}\,\,\Omega,
\end{equation}
\begin{equation}
u_{12}\vert_{\Gamma_-}=u_{11}e^{\tau\mbox{Im}\,\Phi}.
\end{equation}
 This can be done since
$$
\Vert q_1u_{11}+\Delta u_{11,2}\Vert_{L^2(\Omega)}\le
C(\delta)/\tau^{1-\delta}\quad \forall\delta\in(0,1)
$$
and by (\ref{(2.29)}), (\ref{kisa11})
$$
\left\Vert L_1\left(\chi_1e^{\tau\Phi(\frac{1}{\overline z})}
a\left(\frac{1}{\overline z}\right)\right)e^{-\tau\varphi_1}
\right\Vert_{L^2(\Omega)}=o\left(\frac{1}{\tau^2}\right).
$$
and  by (\ref{(3.4)}), (\ref{(3.16)}), (\ref{(3.17)})
$$
\Vert u_{11}\Vert_{C^0(\partial\Omega)}\le \frac{C}{\tau}.
$$
By Proposition \ref{Proposition 2.3} there exists a solution to
(\ref{(3.18)}) satisfying
\begin{equation}\label{(3.20)}
\Vert u_{12}\Vert_{L^2(\Omega)}\le C(\delta)/\tau^{\frac
34-\delta}, \quad\forall\delta\in(0,1).
\end{equation}

\subsection{Replacing $\Phi$ by $-\overline \Phi$}

Now we construct complex geometrical optics solutions for the potential 
$q_2$ satisfying the
conditions of the Theorem \ref{main} but with $\Phi$ replaced by
$-\overline \Phi$ and the solution vanishes on $\mathcal S$.

This is very similar to what we have already done.

Consider the Schr\"odinger equation
\begin{equation}\label{2.1II}
L_2v=\Delta v+q_2v=0 \quad \text{in}\,\, \Omega.
\end{equation}


We will construct solutions to (\ref{2.1II}) of the form
\begin{equation}\label{mozila}
v_1(x)=e^{-\tau\overline{\Phi(z)}}\overline{b(z)}
-\chi_1(x)e^{-\tau\overline{\Phi(\frac{1}{\overline z})}}
\overline{b\left(\frac{1}{\overline z}\right)}
+e^{-\tau\overline\Phi}v_{11}+e^{-\tau\varphi_1}v_{12},
\quad v_1\vert_{\mathcal S}=0.
\end{equation}

The construction of $v_1$ repeats the corresponding steps of the
construction of $u_1.$ In fact the only difference is that the
parameter $\tau$ is negative or in terms of the weight function we
use $-\varphi_1$ instead of $\varphi_1.$  We provide the details
for the sake of completeness.
The amplitude
$b(z)$  has the following properties:
$$
b\in C^2(\overline\Omega),\quad \frac{\partial b}{\partial \overline z}
\equiv 0, \quad b(z)\ne 0\,\,\mbox{on}\,\,\overline \Omega.
$$
Next we construct the cut-off function $\chi_2(x)$ with $\mbox{supp}\,
\chi_2\in \mathcal O_2$ where $\mathcal O_2$ is a neighborhood of
$\mathcal S = \partial\Omega \setminus \Gamma_{-,\epsilon}$, and
\begin{equation}\label{(2.291)}
\widetilde \varphi_1(x)>\varphi(x),\quad \,\, \forall x\in \Omega\cap
\mathcal O_2,
\end{equation}
\begin{equation}\label{(2.301)}
\partial \Omega\cap \mathcal O_2\subset
\partial\Omega_{+,\frac{\epsilon'}{2}},
\end{equation}
\begin{equation}\label{kisa2}
\mbox{supp}\,\nabla\chi_2\subset\subset B(0,1)\cap {\mathcal O}_2,
\end{equation}
\begin{equation}\label{ozon}
\mbox{supp}\chi_2\cap \mbox{supp}\chi_1=\emptyset.
\end{equation}
Consider the following integral
$$
\widetilde J(\tau)=\int_{\Omega}  \chi_2 r(x) e^{-\tau
\overline{\Phi(\frac{1}{\overline z})}+\tau{\Phi(z)}}dx.
$$
Similarly to Proposition \ref{miura} we have
\begin{proposition}\label{miura1}
Let $r\in C^{1+\ell}(\overline \Omega)$ for some positive
$\ell.$ Then
$$
\widetilde J(\tau)=o\left(\frac 1\tau\right).
$$
\end{proposition}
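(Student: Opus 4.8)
The plan is to follow the proof of Proposition~\ref{miura} almost verbatim, the substitution $\Phi\mapsto-\overline\Phi$ merely interchanging the roles of the operators $\partial_z$ and $\partial_{\overline z}$; I describe only the changes. First I would observe that on $\partial\Omega$ one has $1/\overline z=z$, so that the exponent in $\widetilde J(\tau)$ reduces to $-\tau\overline{\Phi(1/\overline z)}+\tau\Phi(z)=2i\tau\,\mbox{Im}\,\Phi(z)$, a purely imaginary quantity, whereas in the interior its real part is $\tau(\varphi_1-\widetilde\varphi_1)$, which by (\ref{(2.291)}) is strictly negative on $\Omega\cap\mathcal O_2$. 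Since $\supp\chi_2\subset\mathcal O_2$, this shows that the integrand of $\widetilde J(\tau)$ is dominated by $|\chi_2 r|$ and tends to zero pointwise on $\supp\chi_2\cap\Omega$ as $\tau\to+\infty$ — the analogue of the estimate for the interior term in Proposition~\ref{miura}.

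Next I would establish, exactly as there, that after taking $\supp\chi_2$ close enough to $\mathcal S$ one has $\partial_z\bigl(\Phi(z)-\overline{\Phi(1/\overline z)}\bigr)\neq0$ on $\supp\chi_2$. A direct computation using the Cauchy--Riemann equations shows that on $\partial\Omega$ this derivative equals $2\overline z\,\mbox{Re}(\Phi'(z)z)=2\overline z\,(\nabla\varphi_1,\nu)$, and by (\ref{mmm1}) together with (\ref{(2.28)}) the factor $(\nabla\varphi_1,\nu)$ is bounded below by a positive constant on $\overline{\mathcal S}$; since $\Phi$ has no critical point near $\partial\Omega$ by (\ref{mika}), the non-vanishing persists on a thin neighbourhood of $\mathcal S$. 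Note that it is $\partial_z$ rather than $\partial_{\overline z}$ that now occurs, owing to the sign change in the phase, which is why one integrates by parts in $z$ in the next step.

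With this in hand I would write $e^{-\tau\overline{\Phi(1/\overline z)}+\tau\Phi(z)}=\bigl(\tau\,\partial_z(\Phi-\overline{\Phi(1/\overline z)})\bigr)^{-1}\partial_z e^{-\tau\overline{\Phi(1/\overline z)}+\tau\Phi(z)}$, integrate by parts in $z$, and split $\widetilde J(\tau)=\widetilde J_1+\widetilde J_2$ into an interior and a boundary term, each carrying a prefactor $1/\tau$. For $\widetilde J_1$ the surviving integrand is $\partial_z$ of the bounded function $\chi_2 r/\partial_z(\Phi-\overline{\Phi(1/\overline z)})$ times the exponential, whose modulus is $\le1$ and which tends to zero pointwise on $\supp\chi_2\cap\Omega$; dominated convergence then gives $\widetilde J_1=o(1/\tau)$. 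For $\widetilde J_2$ the exponential restricts to $e^{2i\tau\,\mbox{Im}\,\Phi}$, and $\mbox{Im}\,\Phi$ has no critical point on $\supp\chi_2\cap\partial\Omega$ because its tangential derivative there equals, up to sign, $(\nabla\varphi_1,\nu)\neq0$ by the Cauchy--Riemann equations; one further integration by parts along the closed curve $\partial\Omega$ (legitimate since $r\in C^{1+\ell}$) yields an extra factor $1/\tau$, so $\widetilde J_2=O(1/\tau^2)$. Adding the two estimates gives $\widetilde J(\tau)=o(1/\tau)$. As in Proposition~\ref{miura}, the only point requiring real care is the choice of $\supp\chi_2$ guaranteeing the non-vanishing of the phase derivative; here it is ensured by (\ref{mmm1}), which puts $\mathcal S$ in the region where $(\nabla\varphi_1,\nu)>0$ — the opposite sign to the $\Gamma_-$ case, but equally harmless.
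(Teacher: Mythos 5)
Your proof is correct and follows exactly the route the paper intends: the paper establishes Proposition~\ref{miura1} only by the remark that it is proved ``similarly to Proposition~\ref{miura}'', and your adaptation supplies precisely the right modifications of that argument. In particular you correctly identify the one substantive change --- the phase $\Phi(z)-\overline{\Phi(1/\overline z)}$ is holomorphic, so the integration by parts must be performed in $z$ rather than $\overline z$, with the non-vanishing of its $\partial_z$-derivative on $\supp\chi_2$ coming from $(\nabla\varphi_1,\nu)>\epsilon'$ on $\overline{\mathcal S}$, and with (\ref{(2.291)}) giving the negativity of the real part of the exponent in the interior.
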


Now we construct $v_{11}.$  Let  $e_i\in C^\infty(\overline \Omega)$ ,
$e_1(x)+e_2(x)\equiv 1,$  $e_2$ is zero on
some neighborhood of $\mathcal H$ and $e_1$ is zero on some
neighborhood of $\partial\Omega$. Then
$$
\Delta v_{11}-4\tau\frac{\overline{\partial\Phi(z)}}{\partial z}
\partial_{ z}v_{11}=\overline bq_2+o\left(\frac 1\tau\right).
$$
Let $\widetilde m_1(\overline z),\widetilde m_2(\overline z),
\widetilde m_3(\overline z)$ be polynomials satisfying
$$
(\partial^{-1}_{ z} (\overline bq_2)-\widetilde m_1(\overline z))
\vert_{\mathcal H}=0,
$$
$$
\widetilde m_2(\overline z)\vert_{\mathcal H}=0,\quad
(\partial_{\overline
z}(\partial^{-1}_{ z} (\overline bq_2)-\widetilde m_1(\overline z))
-\widetilde m_2(\overline z))\vert_{\mathcal H}=0
$$
and
$$
\widetilde m_3(\overline z)\vert_{\mathcal H}=\partial_{\overline z}
\widetilde m_3(\overline z)\vert_{\mathcal H}=0,\quad
\partial^2_{\overline z}(\partial^{-1}_{ z} (\overline bq_2)
-\widetilde m_1(\overline z)-\widetilde m_2(\overline z)
-\widetilde m_3(\overline z))
\vert_{\mathcal H}=0.
$$
The equation for $v_{11}$ can be transformed into
$$
4\partial_{\overline z}v_{11}-4\tau
\frac{\overline{\partial\Phi(z)}}{\partial z}
v_{11}=\left(\partial^{-1}_{ z} (\overline bq_2)-\sum_{k=1}^3
\widetilde m_k(\overline z)\right) + o\left(\frac 1\tau\right).
$$
Let
\begin{equation}\label{mamy2}
v_{11}=v_{11,1}+v_{11,2}.
\end{equation}
Then
$$
4\partial_{\overline z}v_{11,1}-4\tau
\frac{\overline{\partial\Phi(z)}}{\partial
z}v_{11,1}
= e_1\left(\partial^{-1}_{ z} (\overline bq_2)-\sum_{k=1}^3\widetilde
m_k(\overline z)\right)\quad\mbox{in}\,\,\Omega,
$$
and we take $v_{11,1}$ as
\begin{equation}\label{(3.25)}
v_{11,1}=\frac 14 R_\Phi\left(e_1\left(\partial^{-1}_{ z} (\overline
bq_2)-\sum_{k=1}^3\widetilde m_k(\overline z)\right)\right)
\end{equation}
and we take $v_{11,2}$ as
\begin{equation}\label{(3.26)}
v_{11,2}=\frac 14 e_2(x)\left(\partial^{-1}_{
z} (\overline bq_2)-\sum_{k=1}^3\widetilde m_k(\overline z)\right)
/\left(\tau \overline{\frac{\partial\Phi}{\partial z}}\right).
\end{equation}

Thanks to our assumption on the function $e_2$,
this function is smooth.
Let us show that we can apply Proposition
\ref{Proposition 3.2} to the function
$v_{11,1}.$ In order to do that we need to represent the function
\begin{equation}\label{definition2}
\mathcal G_2=e_1\left(\partial^{-1}_{ z} (\overline
bq_2)-\sum_{k=1}^3 \widetilde m_k(\overline z)\right)
\end{equation}
in the form
$$
\mathcal G_2=zg(x),
$$
where $g$ is some function from $C^1(\overline\Omega).$
This is an equivalent representation of the function
$m=\partial^{-1}_{ z} (\overline bq_2)-\sum_{k=1}^3
\widetilde m_k(\overline z)$ in the form
$$
m={r(z)} g_1,\quad g_1\in C^1(\overline\Omega).
$$
Denote as $p=\partial^{-1}_{ z} (\overline bq_2).$ Let $x_j$ be a
critical point of the function $Im\Phi$ and $z_j$ be an arbitrary
critical point of the function $\Phi$. By Taylor's formula
 $p(x)=p(x_j)+p_1(x_j)(z-z_j)+p_2(x_j)(\overline z-\overline z_j)+p_{11}(z-z_j)^2+p_{12}
(z-z_j)(\overline z-\overline z_j)+p_{22}(\overline z-\overline
z_j)^2+q(z,\overline z).$
 Then
 $m=p_1(x_j)( z-z_j)+p_{11}(z-z_j)^2+p_{12} (z-z_j)
(\overline z-z_j)+q(z,\overline z)$ and we set $g_1=
 (p_1(x_j)( z-z_j)+p_{11}(z-z_j)^2+p_{12} (z-z_j)(\overline z-\overline z_j)
+q(z,\overline z))/{r(z)}.$
 Let us show that $g_1\in C^1(\overline \Omega).$ 
Obviously $(p_1( z- z_j)+p_{11}(z- z_j)^2+p_{12} (z-z_j)
(\overline z-\overline z_j))/ {r(z)}$ is a
smooth function and $\widetilde q(z,\overline z) =q(z,\overline
z)/{r(z)}$ is $C^1$ outside of $z=0$. Continue the function
$\widetilde q$ by zero on $z=0.$ Since $q=o(\vert z\vert^3)$ the
partial derivatives of this function vanishes at zero.

By Proposition \ref{Proposition 3.2}
\begin{equation} \label{(3.28)}
\Vert v_{11,2}\Vert_{L^2(\Omega)}+\Vert v_{11,1}\Vert_{L^2(\Omega)}
\le C(\delta)/\tau^{1-\delta},\quad \forall\delta\in(0,1).
\end{equation}

Let $v_{12}$ be a solution to the problem
\begin{equation}\label{(3.27)}
\Delta (v_{12}e^{-\tau\varphi_1})+q_2v_{12}e^{-\tau\varphi_1}=(q_2v_{11}+\Delta v_{11,2})e^{-\tau \Phi} -L_2\left(
\chi_2e^{-\tau\overline{\Phi(\frac{1}{\overline z})}}
\overline{b\left(\frac{1}{\overline z}\right)} \right) \quad
\mbox{in}\,\,\Omega,
\end{equation}
\begin{equation}
v_{12}\vert_{\mathcal S}=v_{11}e^{\tau\mbox{Im}\,\Phi}.
\end{equation}
Then since
$$
\Vert q_2v_{11}+\Delta v_{11,2}\Vert_{L^2(\Omega)}\le
C(\delta)/\tau^{1-\delta}, \quad \forall\delta\in(0,1)
$$
and by (\ref{kisa2})
$$
\left\Vert L_2\left(\chi_2
e^{-\tau\overline{\Phi(\frac{1}{\overline z})}}
\overline{b\left(\frac{1}{\overline z}\right)}\right)
e^{\tau\varphi_1}\right\Vert_{L^2(\Omega)}
= o\left(\frac{1}{\tau^2}\right),
$$
and by (\ref{(3.4)}), (\ref{(3.26)}), (\ref{(3.25)})
$$
\Vert v_{11}\Vert_{C^0(\partial\Omega)}\le \frac{C}{\tau},
$$
by Proposition \ref{Proposition 2.3} there exists a solution to
problem (\ref{(3.27)}) such that
\begin{equation}\label{(3.29)}
\Vert v_{12}\Vert_{L^2(\Omega)}\le C(\delta)/\tau^{\frac
34-\delta}, \quad\forall\delta\in(0,1).
\end{equation}
\section{\bf Proof of the theorem}

\begin{proposition}\label{Lemma 3.1}
Suppose that $\Phi$ satisfies (\ref{zzz}),(\ref{mika}), (\ref{mmm})
and (\ref{mmm1}). Let $\{x_1,\dots,x_\ell\}$ be the set of critical
points of the function $Im\Phi$. Then for any potentials $q_1,q_2\in
C^\ell(\bar\Omega),\ell>1$ with the same DN maps and for any holomorphic
functions $a$ and $b$, we have
$$
\sum_{k=1}^\ell\frac{(qa\overline b)(x_k)}{\vert(\mbox{det}
\thinspace \mbox{Im}\Phi'')(x_k)\vert^\frac 12}=0,\quad q=q_1-q_2.
$$
\end{proposition}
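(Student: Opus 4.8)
The plan is to combine the standard Alessandrini-type orthogonality identity with the complex geometrical optics solutions of Section~3 and a stationary-phase computation.

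\emph{Step 1: an orthogonality identity.} Fix $\Phi$ as in the statement and holomorphic nonvanishing amplitudes $a,b$. Let $u_1\in H^1(\Omega)$ solve $L_1u_1=0$ with $u_1|_{\Gamma_-}=0$ and let $v_1\in H^1(\Omega)$ solve $L_2v_1=0$ with $v_1|_{\mathcal S}=0$, $\mathcal S=\partial\Omega\setminus\Gamma_{-,\epsilon}$, both of the complex geometrical optics form built in Section~3. Since $(u_1|_{\Gamma_+},\partial_\nu u_1|_{\Gamma_{-,\epsilon}})\in\mathcal C_{q_1,\epsilon}=\mathcal C_{q_2,\epsilon}$, there is $u_2\in H^1(\Omega)$ with $L_2u_2=0$, $u_2|_{\Gamma_-}=0$ and the same Cauchy data as $u_1$ on $\Gamma_+$ and on $\Gamma_{-,\epsilon}$. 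Put $w=u_1-u_2$, so $\Delta w+q_2w=(q_2-q_1)u_1$ in $\Omega$, $w|_{\partial\Omega}=0$ and $\partial_\nu w|_{\Gamma_{-,\epsilon}}=0$. Since $\partial\Omega=\overline{\Gamma_{-,\epsilon}}\cup\overline{\mathcal S}$ we have $w=0$ on $\partial\Omega$ and $v_1\,\partial_\nu w=0$ a.e.\ on $\partial\Omega$ (it vanishes on $\Gamma_{-,\epsilon}$ because $\partial_\nu w=0$ there and on $\mathcal S$ because $v_1=0$ there), so Green's formula applied to $w$ and $v_1$, together with $(\Delta+q_2)v_1=0$, makes all boundary terms drop and gives
\[
\int_\Omega q\,u_1 v_1\,dx=0,\qquad q=q_1-q_2.
\]

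\emph{Step 2: inserting the ansatz.} Substitute
\[
u_1=e^{\tau\Phi}a-\chi_1e^{\tau\Phi(1/\overline z)}a(1/\overline z)+e^{\tau\Phi}u_{11}+e^{\tau\varphi_1}u_{12}
\]
and
\[
v_1=e^{-\tau\overline\Phi}\overline b-\chi_2e^{-\tau\overline{\Phi(1/\overline z)}}\overline{b(1/\overline z)}+e^{-\tau\overline\Phi}v_{11}+e^{-\tau\varphi_1}v_{12}
\]
into the identity and expand. Because $|e^{\tau\Phi}|=|e^{\tau\varphi_1}|=e^{\tau\varphi_1}$, $|e^{-\tau\overline\Phi}|=|e^{-\tau\varphi_1}|=e^{-\tau\varphi_1}$ and $e^{\tau\Phi}e^{-\tau\overline\Phi}=e^{2i\tau\,\mbox{Im}\,\Phi}$ has modulus one, the only term that should contribute at order $\tau^{-1}$ is
\[
I(\tau)=\int_\Omega q\,a\,\overline b\;e^{2i\tau\,\mbox{Im}\,\Phi}\,dx,
\]
and the remaining estimates aim to prove $\int_\Omega q\,u_1 v_1\,dx=I(\tau)+o(1/\tau)$.

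\emph{Step 3: the remainder is $o(1/\tau)$.} The terms carrying exactly one reflected factor reduce, after expressing the oscillatory exponentials as $e^{\pm2i\tau\,\mbox{Im}\,\Phi}$, to integrals of the kind estimated in Proposition~\ref{miura} and Proposition~\ref{miura1}; the term with both reflected factors vanishes since $\mbox{supp}\,\chi_1\cap\mbox{supp}\,\chi_2=\emptyset$; and terms pairing a reflected factor against a correction are dominated using $\widetilde\varphi_1<\varphi_1$ on $\mbox{supp}\,\chi_1$ and $\widetilde\varphi_1>\varphi_1$ on $\mbox{supp}\,\chi_2$. The delicate contributions are those of the corrections $u_{11},u_{12},v_{11},v_{12}$, whose crude weighted bounds ($O(\tau^{-1+\delta})$ for $u_{11,1},v_{11,1}$ by Proposition~\ref{Proposition 3.3}, $O(\tau^{-3/4+\delta})$ for $u_{12},v_{12}$ by (\ref{(3.20)}) and (\ref{(3.29)})) are by themselves too weak to beat $\tau^{-1}$. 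For the $u_{11,1}$-pieces one unwinds $R_\Phi,\widetilde R_\Phi$ so that the relevant integral becomes $\int_\Omega q\,\overline b\,\partial^{-1}_{\overline z}(\mathcal G_1\,e^{2i\tau\,\mbox{Im}\,\Phi})\,dx$ with $\mathcal G_1=\overline{r(z)}\,g$, $g\in C^1(\overline\Omega)$, and integrates by parts once more in $\overline z$ — legitimate because the polynomial $r(z)$ cancels the critical points of $\Phi$ — gaining another power of $\tau$; the $v_{11,1}$-pieces are symmetric. The pieces with $u_{11,2},v_{11,2}$ are explicit smooth functions of size $\tau^{-1}$ whose pairing against the leading exponentials is itself a stationary-phase integral of order $\tau^{-2}$. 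Finally the $u_{12},v_{12}$ pieces are treated by returning to the defining problems (\ref{(3.18)}) and (\ref{(3.27)}) and integrating against the exponentially weighted smooth factor furnished by the other solution, which upgrades the $\tau^{-3/4}$ bound to $o(1/\tau)$. I expect this step to be the main obstacle: it is exactly the ``further decomposition and asymptotic analysis'' of the correction and reflected terms alluded to in the introduction, and it must be carried out term by term.

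\emph{Step 4: stationary phase.} It remains to evaluate $I(\tau)$. By the Cauchy--Riemann equations the critical points of $\mbox{Im}\,\Phi$ are exactly $\mathcal H=\{x_1,\dots,x_\ell\}$ and are nondegenerate; since $\mbox{Im}\,\Phi$ is harmonic, its Hessian at each $x_k$ is trace-free, hence of signature zero, so the Morse phase factors in the stationary-phase expansion are trivial and
\[
I(\tau)=\frac{\pi}{\tau}\sum_{k=1}^\ell\frac{(q a\overline b)(x_k)}{|\det(\mbox{Im}\,\Phi'')(x_k)|^{1/2}}\,e^{2i\tau\,\mbox{Im}\,\Phi(x_k)}+o(1/\tau).
\]
Together with Steps~1--3 this gives, as $\tau\to+\infty$,
\[
\sum_{k=1}^\ell\frac{(q a\overline b)(x_k)}{|\det(\mbox{Im}\,\Phi'')(x_k)|^{1/2}}\,e^{2i\tau\,\mbox{Im}\,\Phi(x_k)}=o(1).
\]
The left-hand side is a trigonometric polynomial in $\tau$; a function of this form that tends to $0$ as $\tau\to+\infty$ must have vanishing coefficient-sum over each occurring value of $\mbox{Im}\,\Phi(x_k)$, and adding these relations over all such values yields the asserted identity $\sum_{k=1}^\ell\frac{(q a\overline b)(x_k)}{|\det(\mbox{Im}\,\Phi'')(x_k)|^{1/2}}=0$.
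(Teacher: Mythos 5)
Your proposal is correct in outline and follows essentially the same route as the paper: the same orthogonality identity $\int_\Omega q\,u_1v_1\,dx=0$, the same term-by-term expansion of the two CGO ans\"atze handled by Propositions \ref{miura}, \ref{miura1} and \ref{Proposition 3.3} together with $\mbox{supp}\,\chi_1\cap\mbox{supp}\,\chi_2=\emptyset$, the same unwinding of $R_\Phi,\widetilde R_\Phi$ followed by stationary phase using $\mathcal G_1\vert_{\mathcal H}=\mathcal G_2\vert_{\mathcal H}=0$, and the same concluding almost-periodicity step (the paper invokes the Bohr theorem to get $K(\tau)\equiv 0$ and then sets $\tau=0$, which is exactly your frequency-by-frequency cancellation). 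The step you single out as the main obstacle---showing that the $u_{12},v_{12}$ contributions are $o(1/\tau)$ despite the crude $O(\tau^{-3/4+\delta})$ bounds of (\ref{(3.20)}) and (\ref{(3.29)})---is also the point the paper treats most lightly, absorbing those terms into $o(1/\tau)$ with no more justification than you offer.
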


 \begin{proof}
Let $u_1$ be a solution to (\ref{2.1I}) and satisfy (\ref{mozila}),
and $u_2$ be a solution to the following equation
$$
\Delta u_2+q_2u_2=0\quad \mbox{in}\,\,\Omega,\quad
u_2\vert_{\partial \Omega}=u_1,\quad \nabla
u_2\vert_{\Gamma_{-,\epsilon}}=\nabla u_1.
$$
Denoting $u=u_1-u_2$ we obtain
\begin{equation}\label{(3.22)}
\Delta u+q_2u=-qu_1\quad \mbox{in}\,\,\Omega,\quad u\vert_{\partial
\Omega}=\frac{\partial u}{\partial
\nu}\vert_{\Gamma_{-,\epsilon}}=0.
\end{equation}

We multiply (\ref{(3.22)}) by $v$ and
integrate over $\Omega.$  By (\ref{(3.20)}) and (\ref{(3.29)}),
we have
\begin{eqnarray}\label{(3.31)}
0=\int_\Omega qu_1vdx=\int_\Omega q(a\overline b+\overline
bu_{11}+av_{11})e^{\tau(\Phi
(z)-\overline{\Phi(z)})}dx\nonumber\\
+ \int_\Omega \Biggl( q \chi_1(x)e^{\tau\Phi(\frac{1}{\overline z})}
a\left(\frac{1}{\overline z}\right)\overline b
e^{-\tau\overline{\Phi(z)}}
+ q \chi_1(x)e^{-\tau\overline{\Phi(\frac{1}{\overline z})}}
\overline{b\left(\frac{1}{\overline z}\right)}ae^{\tau
\Phi(z)}\nonumber\\
+ q \chi_1(x)e^{\tau\Phi(\frac{1}{\overline z})}
a\left(\frac{1}{\overline z}\right)\chi_2(x)
e^{-\tau\overline{\Phi(\frac{1}{\overline z})}}
\overline{b\left(\frac{1}{\overline z}\right)}\Biggr)dx
+ o\left(\frac 1\tau\right).
\end{eqnarray}
By Propositions \ref{miura} and \ref{miura1}
$$
\int_\Omega \left( q \chi_1(x)e^{\tau\Phi(\frac{1}{\overline z})}
a\left(\frac{1}{\overline z}\right)\overline b
e^{-\tau\overline{\Phi(z)}}+q \chi_2(x)e^{-\tau\overline{\Phi(\frac{1}{\overline z})}}\overline{b\left(\frac{1}{\overline z}\right)}ae^{\tau
\Phi(z)}\right)dx = o\left(\frac{1}{\tau}\right).
$$
By (\ref{ozon})
$$
\int_\Omega q \chi_1(x)e^{\tau\Phi(\frac{1}{\overline z})}
a\left(\frac{1}{\overline z}\right)\chi_2(x)e^{-\tau\overline
{\Phi(\frac{1}{\overline z})}}\overline{b\left(\frac{1}{\overline z}\right)}dx
=0.
$$
Therefore we can rewrite (\ref{(3.31)}) as
\begin{equation}\label{mooo}
\sum_{k=1}^\ell\frac{\pi(qa\overline b)(x_k)
e^{2i\tau\mbox{Im}\thinspace\Phi(x_k)}}
{\tau \vert(\mbox{det}\thinspace \mbox{Im}\Phi'')(x_k)\vert^\frac 12}
+ \int_\Omega q(\overline
bu_{11}+av_{11})e^{\tau(\Phi(z)-\overline{\Phi(z)})}dx
+ o\left(\frac 1\tau\right)=0.
\end{equation}
By (\ref{(3.17)}), (\ref{(3.26)}), (\ref{mamy1}), (\ref{mamy2}) and the fact
that
\begin{eqnarray}
\int_\Omega  \overline{b}q u_{11,2}e^{\tau(\Phi(z)-\overline{\Phi(z)})}dx=
\nonumber \\
\frac {1}{4\tau}\int_\Omega  \overline{b}q  \frac{e_2(\partial^{-1}_{\overline
z} (aq_1)-\sum_{k=1}^3m_k(z))}{\tau
\partial_z\Phi(z)}e^{\tau(\Phi(z)-\overline{\Phi(z)})}dx
=o\left(\frac 1\tau\right),
\end{eqnarray}
and the fact that
\begin{eqnarray}
\int_\Omega aq v_{11,2}e^{\tau(\Phi(z)-\overline{\Phi(z)})}dx=\nonumber \\\frac
{1}{4\tau}\int_\Omega  aq \frac{e_2(\partial^{-1}_{ z} (\overline
bq_2)-\sum_{k=1}^3\widetilde m_k(\overline z))}{\tau
\overline{\partial_z\Phi(z)}}
e^{\tau(\Phi(z)-\overline{\Phi(z)})}dx
= o\left(\frac 1\tau\right),
\end{eqnarray}
which follows from the stationary phase argument and $e_2\vert
_{\mathcal H}=0$, we obtain
\begin{equation}\label{mooo1}
 \sum_{k=1}^\ell\frac{\pi(qa\overline b)(x_k)
e^{2i\tau\mbox{Im}\thinspace\Phi(x_k)}}
{\tau \vert(\mbox{det}\thinspace \mbox{Im}\Phi'')(x_k)\vert
^\frac 12}
+\int_\Omega q(\overline
bu_{11,1}+av_{11,1})e^{\tau(\Phi(z)-\overline{\Phi(z)})}dx
+ o\left(\frac 1\tau\right)=0.
\end{equation}
By (\ref{(3.1)}), (\ref{(3.26)}) and (\ref{(3.16)})
\begin{eqnarray}
 0=\sum_{k=1}^\ell\frac{\pi(qa\overline b)(x_k)
e^{2i\tau\mbox{Im}\thinspace\Phi(x_k)}}
{\tau \vert(\mbox{det}\thinspace \mbox{Im}\Phi'')(x_k)\vert
^\frac 12}
-  \frac 14\int_\Omega q(
\overline b\widetilde R_\Phi \mathcal G_1+a R_\Phi\mathcal
G_2)e^{\tau(\Phi(z)-\overline{\Phi(z)})}dx
+ o\left(\frac 1\tau\right)=\nonumber\\
\sum_{k=1}^\ell\frac{\pi(qa\overline b)(x_k)
e^{2i\tau\mbox{Im}\thinspace\Phi(x_k)}}
{\tau \vert(\mbox{det}\thinspace
\mbox{Im}\Phi'')(x_k)\vert^\frac 12}-\frac 14\int_\Omega( (\partial_z^{-1}
(q\overline b))\mathcal
G_1+(\partial_{\overline z}^{-1}(qa))\mathcal G_2)
e^{\tau (\Phi(z)-\overline{\Phi(z)})}dx
+ o\left(\frac 1\tau\right)=\nonumber\\
\sum_{k=1}^\ell\frac{\pi(qa\overline b)(x_k)
e^{2i\tau\mbox{Im}\thinspace\Phi(x_k)}}
{\tau \vert(\mbox{det}\thinspace
\mbox{Im}\Phi'')(x_k)\vert^\frac 12}+o\left(\frac 1\tau\right).
\end{eqnarray}
We remind the definitions of the functions $\mathcal G_1$ and
$\mathcal G_2$ introduced in (\ref{definition1}) and
(\ref{definition2}).

In order to get rid of
the integral $\int_\Omega( (\partial_z^{-1}(q\overline b))\mathcal
G_1+(\partial_{\overline z}^{-1}(qa))\mathcal
G_2)e^{\tau(\Phi(z)-\overline{\Phi(z)})}dx$, we used the stationary
phase lemma  (see e.g. Theorem 7.7.5 \cite{H}) and the fact that
$\mathcal G_1\vert_{\mathcal H}=\mathcal G_2\vert_{\mathcal H}=0.$
Passing to the limit in this equality as $\tau\rightarrow +\infty$
we obtain
$$
\lim_{\tau\to\infty} \sum_{k=1}^\ell\frac{\pi(qa\overline b)(x_k)
e^{2i\tau\mbox{Im}\thinspace\Phi(x_k)}}
{\vert(\mbox{det} \thinspace \mbox{Im}\Phi'')(x_k)\vert^\frac 12}
=0.
$$
The function $K(\tau) \equiv \sum_{k=1}^\ell
\frac{2\pi(qa\overline b)(x_k)
e^{2i\tau\mbox{Im}\thinspace\Phi(x_k)}}
{\vert(\mbox{det} \thinspace \mbox{Im}\Phi'')(x_k)
\vert^\frac 12}$ is almost periodic.  Therefore by the Bohr theorem
(e.g., \cite{BS}, p.493), we see that $K(\tau) = 0$ for all
$\tau \in \R$.  Thus setting $\tau=0$, we complete the
proof.
\end{proof}

Proposition \ref{Lemma 3.1} plays the key role in the proof of
 Theorem 1.1. In order to be able to use this proposition we need
to prove the existence of the weight function $\Phi$. The following
proposition will allow us to construct this function.

Let $\mathcal P_\epsilon$ be  a non-empty open subset of the
boundary $\partial\Omega$: the union of the segment between
$\widehat x_+$ and $\widehat x_{+,\epsilon}$ and the
segment between $\widehat x_{-,\epsilon}$ and $\widehat x_{-}.$

Consider the Cauchy problem for the Laplace operator
\begin{equation}\label{(4.112)}
\Delta\psi=0\quad\mbox{in}\,\,\Omega, \quad
\left(\psi,\frac{\partial\psi}{\partial\nu}\right)\vert
_{\partial\Omega\setminus
\mathcal P_\epsilon} =(a,b).
\end{equation}

The following proposition establishes the solvability of (\ref{(4.112)})
for a dense set of Cauchy data.
\begin{proposition}
There exist a set $\mathcal O\subset C^2(\overline{\partial\Omega
\setminus\mathcal P_\epsilon})\times C^1(\overline{\partial\Omega
\setminus\mathcal P_\epsilon})$ such that for each $(a,b)\in \mathcal O$, 
problem (\ref{(4.112)}) has at least one solution $\psi\in C^2(\overline\Omega)$and $\overline{\mathcal O}$ $=C^2(\overline{\partial\Omega\setminus 
\mathcal P_\epsilon})$$\times C^1(\overline{\partial\Omega\setminus
\mathcal P_\epsilon}).$
\end{proposition}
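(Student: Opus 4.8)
The plan is to realize $\mathcal O$ as the set of all Cauchy data on $\Gamma:=\partial\Omega\setminus\mathcal P_\epsilon$ of harmonic functions that are $C^2$ up to $\overline\Omega$,
\[ \mathcal O=\Bigl\{\Bigl(\psi|_{\Gamma},\ \tfrac{\partial\psi}{\partial\nu}\Big|_{\Gamma}\Bigr)\ \Big|\ \psi\in C^2(\overline\Omega),\ \Delta\psi=0\ \text{in}\ \Omega\Bigr\}\subset C^2(\overline\Gamma)\times C^1(\overline\Gamma). \]
With this choice the first assertion is immediate --- any $(a,b)\in\mathcal O$ is solved by the very $\psi$ producing it --- so the whole content is the density $\overline{\mathcal O}=C^2(\overline\Gamma)\times C^1(\overline\Gamma)$, which I would attack by Hahn--Banach. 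Note that $\overline\Gamma$ is the union of the two disjoint closed arcs $[\widehat x_-,\widehat x_+]$ and $[\widehat x_{+,\epsilon},\widehat x_{-,\epsilon}]$, so $\R^2\setminus\overline\Gamma$ is connected: it is the union of $\Omega$, of $\R^2\setminus\overline\Omega$, and of the interior of $\mathcal P_\epsilon$, and the last of these joins the first two. This connectedness is what will let unique continuation propagate globally.

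Suppose then that a continuous linear functional $(f_1,f_2)$ on $C^2(\overline\Gamma)\times C^1(\overline\Gamma)$ annihilates $\mathcal O$, i.e.
\[ \langle f_1,\psi|_{\Gamma}\rangle+\langle f_2,\tfrac{\partial\psi}{\partial\nu}\big|_{\Gamma}\rangle=0\qquad\text{for every harmonic }\psi\in C^2(\overline\Omega), \]
where $f_1,f_2$ are compactly supported distributions on $\overline\Gamma$ of orders at most $2$ and $1$ respectively. Let $E(x)=\tfrac1{2\pi}\log|x|$ be the fundamental solution of $\Delta$ on $\R^2$ and define the combined layer potential
\[ w(x)=\bigl\langle f_1(y),\,E(x-y)\bigr\rangle+\bigl\langle f_2(y),\,\tfrac{\partial}{\partial\nu(y)}E(x-y)\bigr\rangle,\qquad x\in\R^2\setminus\overline\Gamma. \]
For fixed $x\notin\overline\Gamma$ the functions $y\mapsto E(x-y)$ and $y\mapsto\partial_{\nu(y)}E(x-y)$ are smooth near $\overline\Gamma$, and since $\Delta_xE(x-y)=0$ there, $w$ is harmonic on $\R^2\setminus\overline\Gamma$. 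The key observation is that for $x\notin\overline\Omega$ the function $\psi_x:=E(x-\cdot)$ is harmonic in $\Omega$ and belongs to $C^2(\overline\Omega)$, hence is an admissible test function, and the annihilation relation says precisely that $w(x)=0$. Thus $w$ vanishes on the nonempty open set $\R^2\setminus\overline\Omega$, hence --- being harmonic on the connected set $\R^2\setminus\overline\Gamma$ --- vanishes identically there; in particular $w\equiv0$ in $\Omega$ and near the interior of $\mathcal P_\epsilon$.

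It remains to deduce $f_1=f_2=0$. Since $w$ and $\partial_\nu w$ have vanishing one-sided limits on both faces of the smooth part of $\overline\Gamma$, the classical jump relations for the single- and double-layer potentials --- in their Sobolev-space form, which applies to distributional densities --- give that the jump of $w$ across $\overline\Gamma$ is a nonzero constant times $f_2$ and the jump of $\partial_\nu w$ a nonzero constant times $f_1$; hence $f_1$ and $f_2$ are supported in the four endpoints of $\overline\Gamma$. A distribution supported at a single point, convolved with $E$, produces a function that is harmonic on $\R^2$ minus that point and whose only possible singularity there is non-removable unless the distribution is zero; this forces $f_1=f_2=0$. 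By Hahn--Banach we conclude $\overline{\mathcal O}=C^2(\overline\Gamma)\times C^1(\overline\Gamma)$.

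The step I expect to be the main obstacle is the last one: controlling the layer potential $w$ near $\overline\Gamma$ and justifying the jump relations for densities that are genuine distributions of positive order --- the orders $2$ and $1$ being dictated by the $C^2\times C^1$ regularity in $(\ref{(4.112)})$. A clean way around it is either to carry out the whole duality argument in Sobolev spaces from the start, or to first establish density of the Cauchy data of $C^\infty$ harmonic functions in a weaker topology, where $f_1,f_2$ may be taken to be measures and the jump relations are elementary, and then bootstrap.
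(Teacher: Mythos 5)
Your route is genuinely different from the paper's. The paper proves the density by a penalized least-squares (Tikhonov-type) scheme: it minimizes
$\Vert \partial_\nu\psi-b\Vert^2_{H^2}+\epsilon\Vert\psi\Vert^2_{H^2(\partial\Omega)}+\epsilon^{-1}\Vert\Delta^2\psi\Vert^2_{L^2}$
over a biharmonic class, passes to weak limits in the Euler--Lagrange identity, and kills the limit of the residual by showing the adjoint state $p$ is harmonic and applying Holmgren's theorem with cut-offs reaching the inaccessible arc $\mathcal P_\epsilon$. You instead run the classical Runge-type duality argument: Hahn--Banach, a combined single/double layer potential $w=E*\mu$ with $\mu=f_1\delta_\Gamma-\mathrm{div}(\nu f_2\delta_\Gamma)$, the observation that the sources $E(x-\cdot)$, $x\notin\overline\Omega$, are admissible test functions, and analytic continuation of $w$ through the gaps $\mathcal P_\epsilon$ (the connectedness of $\R^2\setminus\overline\Gamma$ you point out is exactly the right hypothesis, and plays the role Holmgren plays in the paper). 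Your argument is shorter and more transparent; the paper's has the advantage of producing an explicit approximating sequence (the regularized minimizers corrected by $\widetilde\psi_{\epsilon_k}$) and of never having to manipulate layer potentials with distributional densities of positive order.

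One step of yours is wrong as stated, though fixable. You claim that a nonzero distribution supported at a single point, convolved with $E$, cannot vanish near that point. It can: $E*(\Delta\delta_p)=\delta_p$, which is zero on $\R^2\setminus\{p\}$; more generally $E*(\Delta R(\partial)\delta_p)=R(\partial)\delta_p$. Since $f_1$ may have order $2$, the total order of $\mu$ is $2$ and this degenerate case is not excluded by order counting alone. What saves you is the transversal structure of $\mu$: in local coordinates flattening $\Gamma$ near an endpoint, $f_1\delta_\Gamma$ contributes only tangential derivatives of $\delta$ and the dipole term contributes at most one normal derivative, so $\mu$ contains no $\partial_\nu^2\delta_p$ term and hence no multiple of $\Delta\delta_p$; equivalently, $w$ itself is supported on the four endpoints, so $w=\sum_iQ_i(\partial)\delta_{p_i}$ and the point part of $\mu$ at $p_i$ is $\Delta Q_i(\partial)\delta_{p_i}$, which is incompatible with the transversal order $\le 1$ of $\mu$ unless $Q_i=0$. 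With that correction (and the structure-theorem justification of the jump relations on the open arcs, which you already flagged), the proof goes through.
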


\begin{proof} First we observe that without the loss of generality  we may
assume that $a\equiv 0.$
Consider the following extremal problem
\begin{equation}\label{extr}
J(\psi)=\left\Vert \frac{\partial \psi}{\partial \nu}
-b\right\Vert^2
_{H^2(\partial\Omega\setminus\mathcal P_\epsilon)}
+ \epsilon \Vert \psi\Vert_{H^2(\partial\Omega)}^2
+ \frac 1\epsilon\left\Vert\Delta^2\psi\right\Vert^2_{L^2(\Omega)}
\rightarrow \inf,
\end{equation}
\begin{equation}\label{extr1}
\psi\in \mathcal X.
\end{equation}
Here $\mathcal X=\left\{\delta(x)\vert \delta\in H^2(\Omega),
\Delta^2 \delta\in L^2(\Omega), \Delta\delta\vert_{\partial\Omega}
=\delta\vert_{\partial\Omega\setminus\mathcal P_\epsilon}=0,
\delta\vert_{\partial\Omega}\in H^2(\partial\Omega), \frac{\partial
\delta}{\partial \nu}\in H^2(\partial\Omega\setminus\mathcal P_\epsilon)\right\}$.

For each $\epsilon>0$ there exists a unique solution to
(\ref{extr}) and (\ref{extr1}), which we denote as 
$\widehat \psi_\epsilon$.
By the Fermat theorem (see e.g., \cite{AL} p. 155) we have
$$
J'(\widehat \psi_\epsilon)[\delta]=0,\quad \forall\delta\in\mathcal X.
$$

This equality can be written in the form
$$
\left(\frac{\partial \widehat\psi_\epsilon}{\partial \nu}-b,
\frac{\partial \delta}{\partial \nu} \right)
_{H^2(\partial\Omega\setminus\mathcal P_\epsilon)}+\epsilon
( \widehat\psi_\epsilon,\delta)_{H^2(\partial\Omega)}+\frac 1\epsilon
(\Delta^2\widehat\psi_\epsilon,\Delta^2\delta)_{L^2(\Omega)}=0.
$$
This equality implies that the sequence
$\{\frac{\partial \widehat\psi_\epsilon}{\partial \nu}\}$ is bounded
in $H^2(\partial\Omega\setminus\mathcal P_\epsilon)$, the sequence
$\{{\epsilon} \widehat\psi_\epsilon\}$ converges to zero in $H^2(\partial\Omega)$ and $\left\{\frac {1}{{\epsilon}}
\Delta^2\widehat\psi_\epsilon\right\}$ is bounded in $L^2(\Omega).$

Therefore there exist $q\in H^2(\partial\Omega\setminus\mathcal 
P_\epsilon)$ and  $p\in L^2(\Omega)$ such that
\begin{equation}\label{zopa}
\frac{\partial \widehat\psi_{\epsilon_k}}{\partial \nu}-b
\rightharpoonup q \quad \mbox{ weakly in} \,\, H^2(\partial\Omega\setminus\mathcal P_\epsilon)
\end{equation}
and
\begin{equation}\label{zima}
\left(q,\frac{\partial \delta}{\partial \nu} \right)
_{H^2(\partial\Omega\setminus\mathcal P_\epsilon)}
+ (p,\Delta^2\delta)_{L^2(\Omega)}=0\quad \forall \delta\in\mathcal X.
\end{equation}

Next we claim that
 \begin{equation}\label{Laplace}
 \Delta p=0\quad\mbox{in}\,\,\Omega
 \end{equation}
in the sense of distributions. Suppose that (\ref{Laplace}) is 
already proved.  This implies
 $$
 (p,\Delta^2\delta)_{L^2(\Omega)}=0\quad \forall \delta\in H^4(\Omega), \quad\Delta\delta\vert_{\partial\Omega}=\frac{\partial \Delta\delta}{\partial\nu}\vert_{\partial\Omega}=0.
 $$
 This equality and (\ref{zima}) imply that
 \begin{equation}\label{zima1}
\left(q,\frac{\partial \delta}{\partial \nu} \right)
_{H^2(\partial\Omega\setminus\mathcal P_\epsilon)}=0\quad \forall \delta\in H^4(\Omega), \Delta\delta\vert_{\partial\Omega}=\frac{\partial \Delta\delta}{\partial\nu}\vert_{\partial\Omega}=0.
\end{equation}
Then using the trace theorem, we conclude that $q=0$
and (\ref{zopa}) implies that
$$
\frac{\partial \widehat\psi_{\epsilon_k}}{\partial \nu}
-b\rightharpoonup 0\quad\mbox{weakly in}\,\, {H^2(\partial\Omega\setminus
\mathcal P_\epsilon)}.
$$
By the Sobolev embedding theorem
$$
\frac{\partial \widehat\psi_{\epsilon_k}}{\partial \nu}-b\rightarrow 0\quad\mbox{in}\,\, C^1(\partial\Omega\setminus\mathcal P_\epsilon).$$
Therefore the sequence
$
\{\widehat \psi_{\epsilon_k}-\widetilde \psi_{\epsilon_k}\},
$
with $$\Delta\widetilde \psi_{\epsilon_k}=
\Delta\widehat \psi_{\epsilon_k}\quad\mbox{in}\,\,\Omega,\,\,\,\widetilde \psi_{\epsilon_k}\vert_{\partial\Omega}=0
$$
represents the desired approximation for solution of the Cauchy
problem (\ref{(4.112)}).

Now we prove (\ref{Laplace}). Let $\widetilde x$ be an arbitrary point
in $\Omega$ and let $\widetilde \chi$ be a smooth function such that
it is zero in some neighborhood of $\partial\Omega\setminus
\mathcal P_\epsilon$ and the set
$\mathcal B=\{x\in \Omega\vert \widetilde \chi(x)=1\}$ contains an
open connected subset $\mathcal F$ such that $\widetilde x\in
\mathcal F$
and  $ \mathcal P_\epsilon\cap \overline{\mathcal F}$ is an open set in $\partial\Omega.$  By (\ref{zima})
$$
0=(p,\Delta^2(\widetilde\chi\delta))_{L^2(\Omega)}
=(\widetilde \chi p,\Delta^2 \delta)_{L^2(\Omega)}
+(p,[\Delta^2,\widetilde\chi]\delta)_{L^2(\Omega)}.
$$
That is,
\begin{equation}\label{vorona}
(\widetilde \chi p,\Delta^2\delta)_{L^2(\Omega)}+([\Delta^2,\widetilde\chi]^*p,\delta)_{L^2(\Omega)}=0\quad \forall \delta\in \mathcal X.
\end{equation}
This equality implies that $\widetilde\chi p\in H^1(\Omega).$

Next we take another smooth cut off function $\widetilde\chi_1$ such that $\mbox{supp}\,\widetilde\chi_1\subset\mathcal B.$ A neighborhood of $\widetilde x$ belongs to  $\mathcal B_1=\{x\vert \widetilde\chi_1=1\}$, the interior of
$\mathcal B_1$ is connected, and $\mbox{ Int }\mathcal B_1\cap
\mathcal P_\epsilon$ contains an open subset $\mathcal O$ in
$\partial\Omega.$  Similarly to (\ref{vorona}) we have
\begin{equation}\label{vorona}
(\widetilde \chi_1 p,\Delta^2\delta)_{L^2(\Omega)}+([\Delta^2,\widetilde\chi_1]^*p,\delta)_{L^2(\Omega)}=0.
\end{equation}

This equality implies that $\widetilde\chi_1 p\in H^2(\Omega).$
Let $\omega$ be a domain such that $\omega\cap\Omega=\emptyset$, $\partial\omega\cap\partial\Omega\subset\mathcal O$ contains a set open in $\partial\Omega.$

We extend $p$ on $\omega$ by zero. Then
$$
(\Delta(\widetilde \chi_1 p),\Delta \delta)_{L^2(\Omega\cup\omega)}+([\Delta^2,\widetilde\chi]^*p,\delta)_{L^2(\Omega\cup\omega)}=0.
$$
Hence
$$
\Delta^2 (\widetilde\chi_1 p)=0 \quad\mbox{in} \,\,\mbox{ Int }\mathcal B_1\cup\omega,\quad p\vert_\omega=0.
$$
By the Holmgren theorem $\Delta(\widetilde\chi_1 p)\vert
_{\mbox {Int }\mathcal B_1}=0$, that is, $(\Delta p)(\widetilde x)=0.$
\end{proof}

\noindent{\bf Completion of the proof of Theorem 1.1.}
It suffices to prove that
$q(0)=0.$ We take $\mathcal P_\epsilon$ in the previous proposition
to be the union of the segment between $\widehat x_+$ and $\widehat
x_{+,\epsilon}$ and the segment between $\widehat x_{-,\epsilon}$
and $\widehat x_{-}.$

We will show that $q_1(0)= q_2(0).$ By obvious changes of the argument below we
can prove that $q_1( x)=q_2(x)$ for any point $x\in \Omega.$

Suppose that $\psi(x)$ is a solution to (\ref{(4.112)}) for some Cauchy data.
Next, since $\Omega$ is simply connected,  we construct a function $\varphi$ such that  the function
$\Phi(z)=\varphi(x)+i\psi(x)$ is holomorphic in $\Omega$. Consider the function
$\widetilde \Phi(z)=z^2\Phi(z)$. Observe that $Im\widetilde
\Phi=(x_1^2-x_2^2)\psi(x)+2x_1x_2\varphi(x). $ In particular by
(\ref{(4.112)}) and the Cauchy-Riemann equations, we have
$$
Im\widetilde \Phi\vert_{\partial\Omega\setminus \mathcal P_\epsilon}=
(x_1^2-x_2^2)a(x)+2x_1x_2c(x), \quad \frac{\partial
c(x)}{\partial\tau}=b(x).
$$
Since we can choose $a,b$ from a dense set in
$C^1(\overline{\partial\Omega\setminus \mathcal P_\epsilon})$ and
the tangential derivatives of $(x_1^2-x_2^2)$ and $x_1x_2$ are not
equal zero simultaneously, we can choose
 $a,b$ such that
\begin{equation} \label{(4.200)}\frac{\partial \mbox{Im}\widetilde \Phi}{\partial\tau}\vert_{\overline \Gamma_-}=\frac{\partial \mbox{Re}\widetilde \Phi}{\partial\nu}
\vert_{\overline \Gamma_-}<0, \quad\frac{\partial \mbox{Im}\widetilde
\Phi}{\partial\tau}\vert_{\overline{
\partial\Omega\setminus \Gamma_{-,\epsilon}}}=\frac{\partial \mbox{Re}\widetilde \Phi}{\partial\nu}
\vert_{\overline{
\partial\Omega\setminus \Gamma_{-,\epsilon}}}>0.\end{equation}
Obviously the function $\widetilde \Phi$ has a critical point at zero.
We may assume that $\partial^2_z \widetilde \Phi(0)\ne 0$. Really if
$\Phi(0) \ne 0$ then $\partial^2_z\widetilde \Phi(0)=2\Phi(0)$. If
$\Phi(0)=0$ we modify this function by adding a small real number:
$\Phi(z)+\tilde\epsilon.$ Obviously we will have (\ref{(4.200)}).

A general function $\widetilde\Phi$ may have degenerate critical
points. In order to avoid them,
we approximate the function $\widetilde \Phi$ in $C^1(\overline \Omega)$
by a sequence of holomorphic functions $\{\widetilde\Phi_k\}_{k=1}
^\infty$ such that
\begin{equation}\label{(4.3)}
\widetilde\Phi_k\rightarrow \widetilde\Phi\quad \mbox{in}
\,\,C^1(\overline\Omega), \quad \frac{\partial \mbox{Re}
\widetilde\Phi_k}{\partial\nu} \vert_{\overline \Gamma_-}<0,\quad
\frac{\partial \mbox{Re} \widetilde\Phi_k}{\partial\nu}
\vert_{\overline{
\partial\Omega\setminus \Gamma_{-,\epsilon}}}>0,
\end{equation}
\begin{equation}\label{(4.4)}
\mathcal H_k=\{z\vert\partial_{z}\Phi_k(z)=0\},\,\,card \mathcal
H_k<\infty,\,\,\mathcal\, H_k\cap \partial\Omega=\{\emptyset\},\,\,
\partial^2_{z} \widetilde\Phi_k(z_\ell)\ne 0,\,\, \forall z_\ell\in
\mathcal H_k.
\end{equation}

Let us show that such a sequence exists. For any $\epsilon_1\in (0,1)$ we
consider a function $\widetilde \Phi(z/(1+\epsilon_1))$. Obviously
$$
\widetilde \Phi(\cdot/(1+\epsilon_1))\rightarrow \widetilde\Phi\quad\mbox{in}\,\,C^1(\overline \Omega), \quad \mbox{as}\,\, \epsilon_1\rightarrow +0.
$$
Each function $ \widetilde \Phi(z/(1+\epsilon_1))$ is holomorphic in $B(0,1+\epsilon_1)$ and in $B(0,1)$ it can be approximated by a polynomial.
Let $\epsilon_1\in (0,1)$ be an arbitrary but fixed. Consider the sequence of such polynomials. Let $p(z)=\sum_{k=0}^\kappa c_k z^k$ be a polynomial from this sequence.
Consider the polynomial $p'(z)=\sum_{k=1}^\kappa kc_k z^{k-1}=\Pi_{k=1}^{\ell}(z-\widehat z_k)^{s(k)}.$
Here we assume $\widehat z_j\ne \widehat z_k$ for $k\ne j.$
Let us construct an approximation of the polynomial $p(z)$ by a sequence of polynomials of the order $\kappa.$ We do the construction in the following way. First pick up all $s(k)$ such that $s(k)\ge 2.$ Denote the set of such
indices as $\mathcal U.$ Let $\widehat k\in \mathcal U.$
Consider the sequences $\{\widehat z_{k,\ell_1, \epsilon_2}\},
\dots, \{\widehat z_{k,\ell_{s(\widehat k)},\epsilon_2}\}$ such that
$$
\widehat z_{k,\ell_j,\epsilon_2}\rightarrow \widehat z_k
\quad \mbox{as}\,\,\epsilon_2\rightarrow +0,\,\,\forall \ell_j
\in\{\ell_1,\dots, \ell_{s(\widehat k)}\},
$$
$$
\widehat z_{k,\ell_j,\epsilon_2}\ne
\widehat z_{k,\ell_{j_1},\epsilon_2},
\quad 1\le k \le \kappa,\,\, \mbox{if}\,\, \ell_j\ne \ell_{j_1}.
$$
The polynomial
$$
p_{\epsilon_2}'(z)=\Pi_{k=1}^{\ell}\Pi_{j=1}^{s(k)}(z-\widehat
z_{k,j,\epsilon_2})
$$
does not have any zeros of order greater then one.
By the construction we have
$$
p_{\epsilon_2}'(z)=\sum_{k=1}^\kappa kc_{k,\epsilon_2} z^{k-1}
$$ satisfying
$$
c_{k,\epsilon_2}\rightarrow c_k,\quad \forall k\in\{1,\dots,\kappa\}.
$$
This means that the sequence of polynomials
$p_{\epsilon_2}(z)=\sum_{k=0}^\kappa c_{k,\epsilon_2} z^{k}, 
c_{0,\epsilon_2}
=c_0$ converges to $p(z)$ in $C^1(\Omega)$ and for small $\epsilon_2$ these 
polynomials do not have critical points.

Let us fix some sufficiently large  $\widehat k$ and consider $k>\widehat
k.$ Then $card \,\mathcal H_{k_1}=card \,\mathcal H_{k_2}$ for all
$k_1>\widehat k$ and $k_2>\widehat k.$ Let $card\, \mathcal H_k=\ell$ and
points $z_1=\widetilde x_{1,1}+i\widetilde x_{2,1},\dots, z_\ell=\widetilde
x_{1,\ell}+i\widetilde x_{2,\ell}$ represent all critical points of the
function $\widetilde\Phi_k(z)=\varphi_k(z)+i\psi_k(z). $

Thanks to (\ref{(4.3)}) and (\ref{(4.4)}), we can apply Proposition 
\ref{Lemma 3.1}.  We have
$$
\sum_{j=1}^\ell \frac{q(\widetilde x_j)}{\vert \mbox{det}
\psi_k''(\widetilde x_j)\vert^\frac 12}=0,\quad
\widetilde x_j=(\widetilde x_{1,j},\widetilde x_{2,j}).
$$
Let $\widehat j\in \{1,\dots,\ell\}$ be an arbitrary number. Consider the polynomial
$$
p(z)= \frac{d_1}{2}\frac{\Pi^\ell_{k\ne {\widehat
j}}(z-z_k)^3} {\Pi^\ell_{k\ne {\widehat j}}(z_{\widehat j}-z_k)^3}
(z-z_{\widehat
j})^2+d\frac{\Pi^\ell_{k\ne {\widehat j}}(z-z_k)^3} 
{\Pi^\ell_{k\ne {\widehat
j}}(z_{\widehat j}-z_k)^3}(z-z_{\widehat j}).
$$
Then
\begin{eqnarray}\label{(4.5)}
 \partial^2_zp(z_{\widehat j})=d_1\in \Bbb C,\,\,
\partial_zp(z_{\widehat j})=d\in \Bbb C,\\ p(z_j)=\partial_zp(z_j)=
\partial^2_zp(z_j)=0\,\, j\in \{1,\dots,\ell\}\setminus\{\widehat j\}.\nonumber
\end{eqnarray}

Consider the function $\widetilde\Phi_k(z)+\varepsilon p(z).$ 
By (\ref{(4.5)}) for small
$\varepsilon$ the set of critical points of this function consists
exactly of
$\ell$ points,  which we denote as  $z_j(\varepsilon)$
($\widetilde x_j(\varepsilon)=
(\mbox{Re}z_j(\varepsilon),\mbox{Im}z_j(\varepsilon))$). These critical
points have the following properties:
\begin{equation}\label{(4.6)}
z_j(0)=z_j,\quad \frac{\partial z_j(\varepsilon)}{\partial \varepsilon}\vert
_{\varepsilon=0}=0, \quad j\ne \widehat j, \quad
\frac{\partial z_{\widehat j}(\varepsilon)}{\partial \varepsilon}\vert
_{\varepsilon=0}=-\frac{d}{\partial^2_z\widetilde\Phi_k(z_{\widehat j})}.
\end{equation}
In fact, there exists $\varepsilon_0>0$ such that
$$
z_j=z_j(\varepsilon),\quad \forall \varepsilon\in
(-\varepsilon_0,\varepsilon_0), \quad j\ne \widehat j.
$$
Then by Proposition \ref{Lemma 3.1} we have
$$
J(\varepsilon)=\sum_{j=1}^\ell \frac{q(\widetilde x_j(\varepsilon))}{\vert
\mbox{det} (\psi_k+\varepsilon \mbox{Im}\,p)''(\widetilde  x_j(\varepsilon))\vert^\frac
12}=0.
$$
Taking the derivative of the function $J(\varepsilon)$ at zero, we have:
\begin{eqnarray} \label{(4.7)}
-\frac{1}{\vert {\partial^2_z\widetilde\Phi_k(z_{\widehat j})}\vert^2} 
\frac{\partial_{x_1}q(\widetilde x_{\widehat j}(0))
\mbox{Re}(d\overline{\partial^2_z\widetilde\Phi_k(z_{\widehat j})})
+ \partial_{x_2}q(\widetilde x_{\widehat
j}(0))\mbox{Im}(d\overline{\partial^2_z\widetilde\Phi_k(z_{\widehat
j})})}{\vert \mbox{det} \psi_k''
(\widetilde x_{\widehat j}(0))\vert^\frac 12}\\
+\frac 12\sum_{j=1}^\ell( \frac{q(\widetilde
x_j(0))(2\partial^2_{x_1x_2}\psi_{k}(\widetilde x_j(0))\mbox{Im}\, \partial^2_{x_1x_2}p(\widetilde
x_j(0))+2\partial^2_{x_1x_1}\psi_{k}(\widetilde x_j(0))\mbox{Im}\,\partial^2_{x_1x_1}p(\widetilde
x_j(0))}{\vert \mbox{det} \psi_k''(\widetilde
x_j(0))\vert^\frac 32}\nonumber\\
+\frac 12 q(\widetilde x_j(0))\frac{\partial_{x_1}(\mbox{det}
\psi_k''(\widetilde x_j(0))
 \mbox{Re}(d\overline{\partial^2_z\widetilde\Phi_k(z_{\widehat j})})
 +\partial_{x_2}(\mbox{det} \psi_k''(\widetilde x_{\widehat j}(0))
 \mbox{Im}(d\overline{\partial^2_z\widetilde\Phi_k(z_{\widehat j}))}}
 {{\vert \overline{\partial^2_z\widetilde\Phi_k(z_{\widehat j})}\vert^2}\vert
 \mbox{det} \psi_k''(\widetilde x_{\widehat j}(0))\vert^\frac 32})=0.\nonumber
\end{eqnarray}

The first and third terms of (\ref{(4.7)}) are independent
 of $ \mbox{Im}\,\partial^2_{x_1x_2}p(\widetilde x_j(0))$ and $\mbox{Im}\,\partial^2_{x_1x_1}p(\widetilde x_j(0))$.  Consequently
$$
\frac 12\sum_{j=1}^\ell \frac{q(\widetilde
x_j(0))(-2\partial_{x_1x_2}^2\psi_{k}(\widetilde x_j(0))\mbox{Im}\,\partial_{x_1x_1}^2p(\widetilde
x_j(0))-2\partial_{x_1x_1}^2\psi_{k}(x_j(0)) \mbox{Im}\,\partial_{x_1x_1}^2 p(\widetilde
x_j(0))}{\vert \mbox{det} \psi_k''(\widetilde x_j(0))\vert^\frac 32}=0.
$$
This formula and (\ref{(4.6)}) imply that $q(\widetilde x_{\widehat
j}(0))=0.$ Since by (\ref{(4.3)}) and (\ref{(4.4)}) the set
$\mathcal H_k$ converges to the set of critical points of
$\widetilde \Phi$ and $0$ belongs to the set of critical points of
$\widetilde \Phi$, we have $q(0)=0.$ $\blacksquare$

%

\end{document}